\newtheorem{theo}{Theorem}
\newtheorem{defn}[theo]{Definition}
\newtheorem{exam}[theo]{Example}
\newtheorem{lem} [theo]{Lemma}
\newtheorem{cor}[theo]{Corollary}
\newtheorem{prop}[theo]{Proposition}
\newtheorem{rem}[theo]{Remark}
\makeatletter \@addtoreset{equation}{section}
\def\None{\N\setminus \{1\}}
\def\N{\mathbb{N}}
\newcommand{\rank}{\operatorname{rank}}
\def\N{\mathbb{N}}
\title[An order matching for $L(3,n)$]{Constructing explicit Sperner chain decompositions for $L(3,n)$ and $L(4,n)$ via Greedy Algorithms and chain tableaux}
\author{Guoce Xin}
\address{School of Mathematical Sciences, Capital Normal University,
Beijing 100048, PR China}
\email{guoce\_xin@163.com}
\author{Yueming Zhong*}
\thanks{*Corresponding author}
\address{School of Mathematics and Statistics, Hainan University, Haikou 570228,
PR China}
\email{zhongyueming107@gmail.com}
\date{\today} %April 22, 2021
\begin{document}
\begin{abstract}
Let $L(m,n)$ denote Young's lattice, consisting of all partitions whose Young diagrams are contained within an $m\times n$ rectangle. It is a classical result that the partially ordered set $L(m,n)$ is rank-symmetric, rank-unimodal, and Sperner; however, finding a direct combinatorial proof via an explicit order matching remains a prominent open problem in the field. In this paper, we address this challenge by constructing explicit order matchings for $L(3,n)$ and extending our methods to comprehensively cover $L(4,n)$. To achieve this, we introduce a novel ``chain tableau" representation, which serves as a powerful tool for identifying and characterizing complex combinatorial patterns. Notably, we demonstrate that the same order matchings can be independently derived using both a greedy algorithm and a recursive kneading process. This work not only resolves the explicit matching problem for $m=3$ and $m=4$ but also establishes robust structural tools that may offer valuable insights into the general $L(m,n)$ case.
\end{abstract}

\maketitle

\vspace{-5mm}
\maketitle

\noindent
\begin{small}
 \emph{Mathematic subject classification}: Primary 05A19; Secondary 05A17; 06A07.
\end{small}
%\subjclass[2010]{Primary 15A15; Secondary 05A15, 11B83}

\noindent
\begin{small}
\emph{Keywords}: posets; order matchings; chain decompositions; greedy algorithm.
\end{small}

\section{Introduction \label{sec:Introduction}}

For positive integers $m$ and $n$, let $L(m,n)$ denote the set of all partitions $\lambda=(\lambda_1,\lambda_2,...,\lambda_m)$ such that $n\geqslant  \lambda_1 \geqslant \lambda_2 \geqslant \cdots  \lambda_m \geqslant 0$.
That is, $L(m,n)$ consists of all partitions $\lambda$ whose Young diagram fits in the $m\times n$ rectangle. For $\lambda, \mu \in L(m,n)$, define
$\lambda \leqslant \mu$ if $\lambda_i\leqslant \mu_i $ for all $1\leqslant i\leqslant m$. In other words, the Young diagram of $\lambda$ is contained in that of $\mu$. Then $L(m,n)$ is a partially ordered set. Indeed, it is a distributive lattice of cardinality $\tbinom{m+n}{m}$.

It is also easy to see that $L(m,n)$ is a ranked poset. That is, it satisfies the Jordan-Dedekind chain condition: all maximal chains between two comparable elements have the same length. The length of a maximal chain between $(0,0,...,0)$ and $\lambda=(\lambda_1,\lambda_2,...,\lambda_m)$ is $|\lambda|=\lambda_1 + \lambda_2 + ... + \lambda_m$. This is the rank $r(\lambda)$ of
$\lambda$. The dual of $\lambda$ in $L(m,n)$, denoted $\lambda^*=(n-\lambda_m,\dots, n-\lambda_2,n-\lambda_1)$, is an involution and plays an important role.
The rank generating function of $L(m,n)$ is the well-known
 Gaussian polynomial:
  \begin{equation}
 \sum_{i\geqslant0} p_i(m,n)q^i =  \tbinom{\textbf{m+n}}{\textbf{m}} = \frac{(1-q)(1-q^2)\cdots(1-q^n)}{(1-q)(1-q^2)\cdots(1-q^k)(1-q)(1-q^2)\cdots(1-q^{n-k})},
 \end{equation}
 where $p_i(m,n)$ is the number of elements in $L_i(m,n)$, which is the set of all elements of rank $i$ in $L(m,n)$. See \cite[Chap. 6]{Stanley-Book}.

A classical result about $L(m,n)$ is the following.
\begin{theo}\label{t-Lmn}
The posets $L(m,n)$ are rank symmetric, rank unimodal, and Sperner.
\end{theo}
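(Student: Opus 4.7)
My plan is to derive all three properties from the construction of a symmetric chain decomposition (SCD) of $L(m,n)$, itself built from an order matching at each level.

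Rank symmetry is essentially given by the structure already introduced: the involution $\lambda\mapsto\lambda^*=(n-\lambda_m,\ldots,n-\lambda_1)$ satisfies $r(\lambda)+r(\lambda^*)=mn$, so its restriction to each rank level is a bijection $L_i(m,n)\to L_{mn-i}(m,n)$, giving $p_i(m,n)=p_{mn-i}(m,n)$ immediately.

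For unimodality and the Sperner property I would reduce both to the existence of order matchings
\[
\varphi_i:L_i(m,n)\hookrightarrow L_{i+1}(m,n),\qquad i<mn/2,
\]
where by ``order matching'' I mean an injection such that $\lambda\lessdot\varphi_i(\lambda)$ is a cover in $L(m,n)$. Injectivity alone gives $p_i(m,n)\le p_{i+1}(m,n)$ for $i<mn/2$, and combined with rank symmetry this upgrades to unimodality on the full range. To obtain the Sperner property I would assemble the $\varphi_i$'s into an SCD: call $\lambda\in L_i(m,n)$ \emph{initial} if $i\le\lfloor mn/2\rfloor$ and $\lambda\notin\varphi_{i-1}(L_{i-1}(m,n))$, and attach to each initial $\lambda$ the chain obtained by iterating $\varphi_i,\varphi_{i+1},\dots$ up to the middle rank, then extended upward to rank $mn-i$ via rank symmetry. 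These chains partition $L(m,n)$ into symmetric saturated chains, whose number equals the middle rank size $p_{\lfloor mn/2\rfloor}(m,n)$; since any antichain meets each chain at most once, the Sperner property follows.

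The main obstacle is constructing the order matchings $\varphi_i$. Two standard non-explicit arguments are available. The algebraic route, in the spirit of Stanley and Proctor, realizes the graded vector space $\bigoplus_i\mathbb{Q}L_i(m,n)$ as a weight decomposition of a finite-dimensional rational $\mathfrak{sl}_2(\mathbb{C})$-representation whose character is the Gaussian polynomial (for example, the natural $SL_2$-action on bihomogeneous invariants of bidegree $(m,n)$); a hard-Lefschetz-type argument then shows that the raising operator is injective on each weight space below the center, and choosing bases compatibly with the order on $L(m,n)$ yields the $\varphi_i$. The combinatorial-existential route instead verifies the normalized matching (LYM) property at each rank of $L(m,n)$ and invokes Hall's theorem to extract a matching level by level. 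Either approach establishes the theorem, but neither produces a concrete rule for $\varphi_i$; turning existence into a clean, pattern-based combinatorial description of $\varphi_i$, first for $m=3$ and then for $m=4$, is precisely the problem taken up in the rest of the paper.
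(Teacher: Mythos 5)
Your treatment of rank symmetry (the involution $\lambda\mapsto\lambda^*$) and your reduction of unimodality to the existence of order matchings are fine, and they match the paper, which in fact gives no proof of this theorem at all: it offers the duality remark and then cites Sylvester/O'Hara for unimodality and Stanley for spernicity, so your algebraic sketch is essentially a reconstruction of the cited proof. The first genuine gap is your gluing step. You claim that a chain built by iterating $\varphi_i$ up to the middle rank can be ``extended upward to rank $mn-i$ via rank symmetry,'' yielding a partition into \emph{symmetric} saturated chains. That step fails. When $mn$ is even, the top $\mu$ of such a chain lies in the middle rank and so does $\mu^*$, so there is nothing to extend through; and in general $\mu$ and $\mu^*$ need not even be comparable (in $L(3,3)$ take $\mu=(2,2,0)$, whose dual $\mu^*=(3,1,1)$ is incomparable with $\mu$). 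The correct construction dualizes the \emph{matchings}, not the chains: for $i>mn/2$ define $\psi_i(\mu)=\left(\varphi_{mn-i}(\mu^*)\right)^*$, take the union of all $\varphi$- and $\psi$-edges, and check that each connected component is a saturated chain containing exactly one element of rank $\lfloor mn/2\rfloor$ (for $mn$ odd one also uses that $\varphi_{\lfloor mn/2\rfloor}$ is forced to be a bijection between the two middle ranks). This produces a \emph{Sperner} chain decomposition in the paper's sense, which is all your counting argument needs, but not a symmetric one: the paper's own decomposition of $L(3,n)$ has chains running from $(4k+\ell,2k,0)$ to $(n,n-2k,\ell-2)$, which are rank symmetric only when $n=4k+2\ell-2$. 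Indeed, if order matchings assembled automatically into symmetric chains, Stanley's theorem would immediately give symmetric chain decompositions of $L(m,n)$ for every $m$, resolving a problem the paper explicitly notes is open for $m\ge 5$.

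The second gap is your ``combinatorial-existential route'': you propose to verify the normalized matching (LYM) property of $L(m,n)$ and then extract matchings via Hall's theorem, but you give no argument for that verification, and none is available as a routine check --- establishing LYM for $L(m,n)$ is itself a hard problem, and the paper stresses that all known proofs of the existence of order matchings use algebraic techniques. As written this alternative is an unsupported assertion and should be dropped. The theorem then rests entirely on your algebraic route, which is sound as a sketch: injectivity of the $\mathfrak{sl}_2$ raising operator on weight spaces below the middle, together with the fact that its matrix is supported on cover relations, yields an order matching by selecting a nonsingular square submatrix and a nonzero term in its determinant expansion; this is precisely the content of the Stanley reference the paper cites.
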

The rank symmetry follows easily from the dual operation. The rank unimodality is due to Sylvester \cite{Sylvester}. Later
O'hara  gave a combinatorial proof that astounded the combinatorial community. See \cite{Ohara} and \cite{Doron}. Stanley gave a nice survey on unimodality and log-concavity in
\cite{Stanley-LogConcave}. The spernicity
was first proved by Stanley \cite{Stanley-Spernicity}.
See \cite[Corollary 6.10]{Stanley-Book} for a proof and reference therein.

It was shown that there exists an \emph{order matching}
$\varphi_i$ from $L_i(m,n)$ to $L_{i+1}(m,n)$ for each $i<mn/2$. That is, $\varphi_i$ is an injection and satisfies $\varphi_i(\lambda)>\lambda$
for all $\lambda\in L_i(m,n)$. All known proofs of this result used algebra technique.

It is still an outstanding combinatorial problem to find an explicit order matching $\varphi_i$ from $L_i(m,n)$ to $L_{i+1}(m,n)$. The problem is
equivalent to finding a Sperner chain decomposition, which is usually simpler to describe than the order matching. A
a chain decomposition $\{ C_j \}_{1\le j\le N}$ of $L(m,n)$ is called \emph{Sperner} if each chain contains an element of middle rank $\lfloor mn/2 \rfloor$.
Such a chain decomposition clearly implies the Sperner property.
See Section \ref{sec:RecursiveSperner} for detailed discussion.

Symmetric chain decompositions requires each chain is rank symmetric. Its existence for $L(m,n)$ was not confirmed for $m\ge 5$.
Symmetric chain decompositions was only constructed for $m=3$ in \cite{B.Lindstrom}, for $m=4$ in \cite{Nathan}, and for $m=3,4$
in \cite{Wen}.
Consequently, order matchings for $m=3,4$ are known \cite{John}, but explicit order matching even for $m=3$ does not seem to appear in the literature.

Our ultimate goal is to construct explicit Sperner chain decompositions for $L(m,n)$. By using greedy algorithm, computer experiment suggests
Sperner chain decompositions for $L(3,n)$ and $L(4,n)$, but not for larger $m$. The pattern for $L(3,n)$ is particularly nice, and we obtain
an explicit order matching as stated in Theorem \ref{theo-varphi}. Analogous result for $L(4,n)$ seems too complicated to be presented.

We will give two equivalent descriptions of Theorem \ref{theo-varphi} using Sperner chain decompositions:  i) the idea is to
represent a chain by a standard Young tableau of a skew shape, which we call a chain tableau. We will see the pattern easily from the chain tableaux
in Section \ref{sec:L3nVarphi}. ii) the idea is to use the recursive structure $L(m,n)=L(m,n-1)\biguplus (n\oplus L(m-1,n))$ and a kneading method.
Both ideas extend for $L(4,n)$, but need fresh ideas for larger $m$.

The paper is organized as follows.
Section \ref{sec:Introduction} is this introduction.
In Section \ref{sec:varphi}, we give an explicit order matching $\varphi$ for $L(3,n)$ in Theorem \ref{theo-varphi}, and show that it agrees with the greedy algorithm.
In Section \ref{sec:L3nVarphi}, we give a tableaux version of $\varphi$ in Theorem \ref{theo:varphi-chain}.  The proof relies on the properties of $\varphi$.
Section \ref{sec:L3nL4n} presents a self contained proof of Theorem \ref{theo:varphi-chain}. The idea extends for $L(4,n)$. The corresponding result is stated in Theorem \ref{theo:L4n-chain-dec}.
We only outlined the proof.
Section \ref{sec:RecursiveSperner} constructs the Sperner chain decompositions of $L(m,n)$ for $m=3,4$ by a recursive method.

\section{The order matching $\varphi$ for $L(3,n)$ and the greedy algorithm \label{sec:varphi}}
We first state and prove our order matching for $L(3,n)$, and then talk about its discovery by the greedy algorithm.

\subsection{The order matching $\varphi$}
The order matching relies on the starting partitions and end partitions defined by:
\begin{align}
  S_{3,n} &=\{ (4k+\ell,2k,0): k \in \N, \ell \in \N\setminus\{1\}, 4k+\ell \leqslant n, 6k+\ell \leqslant 3n/2 \} \\
  E_{3,n} &= S_{3,n}^* = \{ \lambda^*: \lambda \in S_{3,n}\}. \label{e-E3n}
\end{align}
Then partitions in $S_{3,n}$ have ranks no more than $3n/2$, and partitions in $E_{3,n}$ have ranks no less than $3n/2$.
We call $S_{3,n}$ the starting set and $E_{3,n}$ the end set.
%$\mathbb{N}^1=\mathbb{N}\setminus \{1\}$ and $\mathbb{N}^-$ is the set of all negative integers.

The following lemma will be frequently used without mentioning.

\begin{lem}\label{lem-E3n-check}
A partition $\lambda=(\lambda_1,\lambda_2,\lambda_3)$ belongs to $E_{3,\lambda_1}$ if and only if:
\begin{enumerate}
  \item $\lambda_1-\lambda_2$ is even.
  \item $2\lambda_2-\lambda_1-\lambda_3\in \None$.
\end{enumerate}
\end{lem}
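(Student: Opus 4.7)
The plan is to read off the two conditions directly from the definition of $E_{3,n}$ by solving for the parameters $k$ and $\ell$. Fix a partition $\lambda=(\lambda_1,\lambda_2,\lambda_3)$ and set $n=\lambda_1$. By definition, $\lambda\in E_{3,n}$ means $\lambda=\mu^*$ for some $\mu=(4k+\ell,2k,0)\in S_{3,n}$. Applying the dual formula $\mu^*=(n-\mu_3,n-\mu_2,n-\mu_1)$, this is equivalent to the system
\begin{equation*}
  \lambda_1=n,\qquad \lambda_2=n-2k,\qquad \lambda_3=n-4k-\ell,
\end{equation*}
so $k$ and $\ell$ are forced to be
\begin{equation*}
  k=\tfrac12(\lambda_1-\lambda_2),\qquad \ell=2\lambda_2-\lambda_1-\lambda_3.
\end{equation*}
Requiring $k\in\N$ and $\ell\in\N\setminus\{1\}$ translates verbatim into the two listed conditions.

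First I would verify the forward direction: assuming $\lambda\in E_{3,\lambda_1}$, the defining constraints $k\in\N$ and $\ell\in\N\setminus\{1\}$ yield conditions (1) and (2) by the formulas above. For the converse, given (1) and (2), define $k$ and $\ell$ as above; it remains to check that $\mu=(4k+\ell,2k,0)$ actually lies in $S_{3,n}$, i.e.\ that the two inequalities $4k+\ell\leqslant n$ and $6k+\ell\leqslant 3n/2$ hold. A quick substitution gives $4k+\ell=\lambda_1-\lambda_3$ and $6k+\ell=2\lambda_1-\lambda_2-\lambda_3$, so these inequalities become $\lambda_3\geqslant 0$ and $\lambda_1\leqslant 2\lambda_2+2\lambda_3$ respectively.

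The step I expect to require the most care is showing that these two background inequalities are automatic, so that the lemma really needs only the two stated conditions. The first one, $\lambda_3\geqslant 0$, is immediate because $\lambda$ is a partition. The second follows from condition (2): since $\ell=2\lambda_2-\lambda_1-\lambda_3\geqslant 0$, we have $\lambda_1\leqslant 2\lambda_2-\lambda_3\leqslant 2\lambda_2\leqslant 2(\lambda_2+\lambda_3)$. Thus both $S_{3,n}$-membership constraints come for free from $\ell\in\N$, and the two conditions in the lemma are necessary and sufficient. A final sanity check is that $\mu=(4k+\ell,2k,0)$ is a genuine partition in $L(3,n)$, which also follows from $k,\ell\geqslant 0$ together with the bound $4k+\ell\leqslant n$ just established.
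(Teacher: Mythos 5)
Your proof is correct, and it is exactly the direct unwinding of the definitions $E_{3,n}=S_{3,n}^*$ and $\lambda^*=(n-\lambda_3,n-\lambda_2,n-\lambda_1)$ that the paper implicitly relies on: the paper states this lemma without proof, treating it as immediate from the definition of $S_{3,n}$. Your added care in checking that the two side constraints $4k+\ell\leqslant n$ and $6k+\ell\leqslant 3n/2$ are automatic (from $\lambda_3\geqslant 0$ and $\ell\geqslant 0$) is precisely the point that makes the omitted verification legitimate, so nothing is missing.
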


\begin{theo}\label{theo-varphi}
The following map $\varphi$ is a bijection from $L(3,n)\setminus E_{3,n}$ to $ L(3,n)\setminus S_{3,n}$.
$$ \varphi(\lambda)=\left\{
\begin{array}{rcl}
(\lambda_1 +1,\lambda_2,\lambda_3),      &      &  \text{if } \lambda \in E_{3,\lambda_1},    \text{  otherwise };\\
(\lambda_1,\lambda_2+1,\lambda_3),       &      & \text{if } \lambda_2+\lambda_3 \equiv0 \pmod2 \text{ and } (\lambda_1 -1,\lambda_2+1,\lambda_3) \notin E_{3,\lambda_1 -1};\\
(\lambda_1,\lambda_2+1,\lambda_3),       &      &\text{if }\lambda_2+\lambda_3 \not\equiv 0 \pmod2 \text{ and } (\lambda_1 -1,\lambda_2,\lambda_3+1) \in E_{3,\lambda_1 -1}; \\
(\lambda_1,\lambda_2,\lambda_3+1),       &      & \text{if } \lambda_2+\lambda_3 \not\equiv 0 \pmod2 \text{ and } (\lambda_1 -1,\lambda_2,\lambda_3+1) \notin E_{3,\lambda_1 -1}.
%\\
%(\lambda_1,\lambda_2,\lambda_3+1),       &      & \text{elif } \lambda_2+\lambda_3 \equiv 0 \pmod2 \text{ and } (\lambda_1 -1,\lambda_2+1,\lambda_3) \in E_{3,\lambda_1 -1}.
\end{array}
\right. $$
Therefore $\varphi$ defines order matchings
$$L_0(3,n) \to L_1(3,n) \to \cdots L_{j} (3,n) \leftarrow L_{j+1} (3,n) \leftarrow \cdots \leftarrow L_{3n}(3,n), $$
where $j=\lfloor 3n/2\rfloor$. Then $L(3,n)$ is rank unimodal and Sperner.
\end{theo}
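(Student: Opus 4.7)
The plan is to prove Theorem \ref{theo-varphi} by verifying three properties of $\varphi$: (i) well-definedness, meaning $\varphi(\lambda)\in L(3,n)$ for every $\lambda\in L(3,n)\setminus E_{3,n}$; (ii) injectivity; and (iii) the image inclusion $\varphi(L(3,n)\setminus E_{3,n})\subseteq L(3,n)\setminus S_{3,n}$. The dual involution $\lambda\mapsto\lambda^*$ is a bijection $S_{3,n}\to E_{3,n}$, so $|L(3,n)\setminus E_{3,n}|=|L(3,n)\setminus S_{3,n}|$; combined with (ii) and (iii), this cardinality identity forces $\varphi$ to be the required bijection.

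For well-definedness, I will check each of the four cases separately. In Case 1 the bound $\lambda_1+1\le n$ follows since $\lambda\in E_{3,\lambda_1}$ and $\lambda\notin E_{3,n}$ together force $\lambda_1\ne n$, because the characterization of Lemma \ref{lem-E3n-check} is purely in terms of $\lambda_1,\lambda_2,\lambda_3$. Cases 2 and 3 require $\lambda_2+1\le\lambda_1$; if instead $\lambda_1=\lambda_2$, the arithmetic conditions in Lemma \ref{lem-E3n-check} combined with the parity and $E$-tests of each case produce a contradiction. Case 4 requires $\lambda_3+1\le\lambda_2$, and the equality $\lambda_2=\lambda_3$ is excluded by the parity hypothesis $\lambda_2+\lambda_3\not\equiv 0\pmod 2$.

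For (ii) and (iii), I will describe an explicit inverse $\psi:L(3,n)\setminus S_{3,n}\to L(3,n)\setminus E_{3,n}$: given $\mu$, first test whether $(\mu_1-1,\mu_2,\mu_3)\in E_{3,\mu_1-1}$ and decrement $\mu_1$ if so (inverting Case 1); otherwise decrement $\mu_2$ if $\mu_2+\mu_3$ is odd (Case 2) or if $\mu_2+\mu_3$ is even and $(\mu_1-1,\mu_2-1,\mu_3+1)\in E_{3,\mu_1-1}$ (Case 3); and decrement $\mu_3$ in the remaining situation (Case 4). The verification $\psi\circ\varphi=\mathrm{id}$ reduces to tracking how the parities and $E$-membership conditions transform under each branch; for instance, starting in Case 3 one checks that $(\mu_1-1,\mu_2,\mu_3)$ fails the $E_{3,\mu_1-1}$ test because $\lambda_1-\lambda_2$ is odd there, so the inverse does not mistakenly select Case 1. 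In parallel, I will verify $\varphi(\lambda)\notin S_{3,n}$ case by case: Case 4 is immediate because elements of $S_{3,n}$ have vanishing third coordinate, while Cases 1, 2, 3 each yield a short arithmetic contradiction with the form $(4k+\ell,2k,0)$, $\ell\ne 1$, using Lemma \ref{lem-E3n-check}.

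Once $\varphi$ is known to be a bijection, the order-matching statement follows quickly. Every partition in $S_{3,n}$ has rank at most $\lfloor 3n/2\rfloor$ and dually every partition in $E_{3,n}$ has rank at least $\lceil 3n/2\rceil$, so for each $i<\lceil 3n/2\rceil$ the rank-$i$ slice of the domain is all of $L_i(3,n)$, giving an injection $L_i(3,n)\to L_{i+1}(3,n)$, and for each $i>\lfloor 3n/2\rfloor$ the inverse restricts to an injection $L_i(3,n)\to L_{i-1}(3,n)$. Chaining these yields the matching displayed in the theorem and hence the rank unimodality and Sperner property of $L(3,n)$. The main technical obstacle is the case-by-case bookkeeping in (ii) and (iii): four branches crossed with the parity and $E$-membership subcases produce a moderately large verification, though each subcase reduces to a short arithmetic check via Lemma \ref{lem-E3n-check}, keeping the proof elementary.
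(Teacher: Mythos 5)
Your overall architecture --- well-definedness, injectivity, image avoiding $S_{3,n}$, and then the cardinality identity $|E_{3,n}|=|S_{3,n}|$ coming from the duality $\lambda\mapsto\lambda^*$ --- is exactly the skeleton of the paper's proof, and your injectivity step is a legitimate variant of it: where the paper argues by contradiction on two preimages of a common partition, you exhibit a left inverse $\psi$ and check $\psi\circ\varphi=\mathrm{id}$ branch by branch. I verified your $\psi$: it does invert all four branches (in Case 2 the first test of $\psi$ fails by the case hypothesis; in Case 3 it fails because $\lambda_1-\lambda_2$ is odd there, as you note; in Case 4 one additionally needs $(\mu_1-1,\mu_2-1,\mu_3+1)=(\lambda_1-1,\lambda_2-1,\lambda_3+2)\notin E_{3,\mu_1-1}$, which does follow from $\lambda\notin E_{3,\lambda_1}$ together with the parity of $\lambda_1+\lambda_3$). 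This route even buys something the paper only obtains later: an explicit formula for $\varphi^{-1}$, which the paper derives indirectly as $\varphi^{-1}(\mu)=\left[\varphi(\mu^*)\right]^*$ from the involution $^*\varphi$. Your simpler treatment of Case 4 of well-definedness (parity alone excludes $\lambda_2=\lambda_3$) is also correct.

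There is, however, one genuine gap: you never verify that the four displayed cases exhaust $L(3,n)\setminus E_{3,n}$. For $\lambda\notin E_{3,\lambda_1}$ the listed branches cover ($\lambda_2+\lambda_3$ even, $(\lambda_1-1,\lambda_2+1,\lambda_3)\notin E_{3,\lambda_1-1}$), ($\lambda_2+\lambda_3$ odd, $(\lambda_1-1,\lambda_2,\lambda_3+1)\in E_{3,\lambda_1-1}$), and ($\lambda_2+\lambda_3$ odd, $(\lambda_1-1,\lambda_2,\lambda_3+1)\notin E_{3,\lambda_1-1}$); the combination ``$\lambda_2+\lambda_3$ even and $(\lambda_1-1,\lambda_2+1,\lambda_3)\in E_{3,\lambda_1-1}$'' is covered by no branch, so a priori $\varphi$ could be undefined on part of the claimed domain. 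Your counting argument requires an injection defined on \emph{all} of $L(3,n)\setminus E_{3,n}$; an injection defined only on a proper subset cannot be promoted to the asserted bijection. The paper closes this hole in a Remark following the theorem: in the missing case, $(\lambda_1-1,\lambda_2+1,\lambda_3)\in E_{3,\lambda_1-1}$ forces $\lambda_1-\lambda_2$ to be even and $2\lambda_2-\lambda_1-\lambda_3+3\in\None$, while $\lambda\notin E_{3,\lambda_1}$ with $\lambda_1-\lambda_2$ even forces $2\lambda_2-\lambda_1-\lambda_3\notin\None$, leaving $2\lambda_2-\lambda_1-\lambda_3\in\{-3,-1,1\}$; this contradicts the fact that $\lambda_1+\lambda_3=(\lambda_1-\lambda_2)+(\lambda_2+\lambda_3)$ is even, so the case is vacuous. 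You need to add this short vacuity argument (or an equivalent one) for your proof to close.
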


\begin{rem}
For a full classification of the case $\lambda\not\in E_{3,\lambda_1}$, we need to consider the case $\lambda_2+\lambda_3 \equiv 0 \pmod2 \text{ and } (\lambda_1 -1,\lambda_2+1,\lambda_3) \in E_{3,\lambda_1 -1}$.
But there is indeed no partition belonging to this case. That is, there is no partition $\lambda$ satisfying
(i) $\lambda \notin E_{3,\lambda_1}$; (ii) $\lambda_2+\lambda_3\equiv 0 \pmod 2$; and (iii) $\mu=(\lambda_1-1,\lambda_2+1,\lambda_3) \in E_{3,\lambda_1-1}$.
The reason is as follows.

By (iii), we have $\mu_1-\mu_2=\lambda_1-\lambda_2-2$ is even, and $2\mu_2-\mu_1-\mu_3=2\lambda_2-\lambda_1-\lambda_3+3\in \None$. By (i) and $\lambda_1-\lambda_2$ is even, we have
$2\lambda_2-\lambda_1-\lambda_3 \notin \None$. Then $2\lambda_2-\lambda_1-\lambda_3 \in \{-3,-1,1\}$. But this contradicts the fact that  $\lambda_1+\lambda_3=(\lambda_1-\lambda_2)+(\lambda_2+\lambda_3)$ is even.
\end{rem}

The proof follows from the following two lemmas, since $E_{3,n}$ and $S_{3,n}$ have the same cardinality.

\begin{lem}
  The map $\varphi$   is well-defined from $L(3,n)\setminus E_{3,n}$ to $ L(3,n)\setminus S_{3,n}$.
\end{lem}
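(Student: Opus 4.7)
\noindent\emph{Proof plan.} The plan is to check three things in sequence: (i) the four cases in the definition of $\varphi$ partition $L(3,n)\setminus E_{3,n}$ disjointly; (ii) for each $\lambda$ in the domain $\varphi(\lambda)$ is a legitimate element of $L(3,n)$; and (iii) $\varphi(\lambda)\notin S_{3,n}$.

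Step (i) is nearly by construction. Case 1 isolates the subset where $\lambda\in E_{3,\lambda_1}$. Outside of this, the remaining cases are cut by the parity of $\lambda_2+\lambda_3$, and within each parity branch by a single membership test in $E_{3,\lambda_1-1}$. The only apparent gap---$\lambda\notin E_{3,\lambda_1}$, $\lambda_2+\lambda_3$ even, and $(\lambda_1-1,\lambda_2+1,\lambda_3)\in E_{3,\lambda_1-1}$---is ruled out by the Remark preceding the lemma.

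For Step (ii) I must check that $\varphi(\lambda)$ is weakly decreasing with first part at most $n$. The key observation, drawn from Lemma \ref{lem-E3n-check}, is that membership in $E_{3,\lambda_1}$ depends only on the intrinsic data of $\lambda$ and forces the ``$N$'' parameter to equal $\lambda_1$. Hence in Case 1 the hypothesis $\lambda\in E_{3,\lambda_1}$ together with $\lambda\notin E_{3,n}$ yields $\lambda_1<n$, so $\lambda_1+1\le n$. For the other cases the first part is unchanged; the nontrivial checks are $\lambda_2+1\le\lambda_1$ in Cases 2--3 and $\lambda_3+1\le\lambda_2$ in Case 4, each of which I plan to dispose of by a short parity or membership argument. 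For instance, if $\lambda_2=\lambda_1$ in Case 2, then $\lambda_1-\lambda_2=0$ is even and $\lambda\notin E_{3,\lambda_1}$ forces $2\lambda_2-\lambda_1-\lambda_3=\lambda_1-\lambda_3=1$, making $\lambda_2+\lambda_3=2\lambda_1-1$ odd and contradicting the Case 2 hypothesis.

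Step (iii) is where the delicate interplay between $E$- and $S$-membership enters, and I expect it to be the main obstacle. Since every element of $S_{3,n}$ has third part zero, $\varphi(\lambda)\notin S_{3,n}$ is automatic whenever $\varphi(\lambda)_3>0$, which handles Case 4 outright and reduces Cases 1--3 to their sub-cases with $\lambda_3=0$. In each such sub-case the plan is to rewrite both the putative $S_{3,n}$-condition on $\varphi(\lambda)$ and the standing $E_{3,\cdot}$-condition on $\lambda$ (or on the shifted partition in Case 3) using Lemma \ref{lem-E3n-check}, and then extract a contradiction from a sign or parity inequality on $2\lambda_2-\lambda_1$. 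For example, in Case 1 with $\lambda_3=0$ the hypothesis $\lambda\in E_{3,\lambda_1}$ gives $2\lambda_2-\lambda_1\in\N\setminus\{1\}$, which forces $\lambda_1+1-2\lambda_2\in\{1,-1,-2,\ldots\}$, never landing in $\N\setminus\{1\}$ as would be required for $(\lambda_1+1,\lambda_2,0)\in S_{3,n}$; Case 3 with $\lambda_3=0$ is analogous, using the $E_{3,\lambda_1-1}$-hypothesis on $(\lambda_1-1,\lambda_2,1)$ to shift the same inequality by one.
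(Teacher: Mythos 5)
Your proposal is correct and takes essentially the same route as the paper's own proof: the same case analysis (using the Remark to dispose of the empty even-parity case) showing $\varphi(\lambda)$ is a valid partition via parity/membership contradictions, and the same $E$-versus-$S$ arithmetic through Lemma \ref{lem-E3n-check} to rule out $\varphi(\lambda)\in S_{3,n}$, where your observation that all of $S_{3,n}$ has third part zero is just a streamlined form of the paper's ``$\lambda_3=-1$ is not a partition'' step. The checks you defer (the odd-branch and bottom-row validity checks, and the even-branch $S$-exclusion) are indeed routine and go through exactly by the short parity or membership arguments you indicate.
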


\begin{proof}
%We first show that for each $ \lambda \in \{L(3,n)-E_{3,n}\}$ we have $\varphi(\lambda) \in L(3,n)$.
Firstly, we show $\varphi(\lambda)$ is a valid partition. We discuss in 3 cases as follow.

\begin{enumerate}
  \item [case 1.] If $\varphi(\lambda)=(\lambda_1+1,\lambda_2,\lambda_3)$ then we need to show $\lambda_1 < n$. But if $\lambda_1=n$
  then $\lambda\in E_{3,\lambda_1}$ conflicts $\lambda \in L(3,n)\setminus E_{3,n}$.

  \item [case 2.] If $\varphi(\lambda)=(\lambda_1,\lambda_2+1,\lambda_3)$, then we need to show that $\lambda_1> \lambda_2$. Suppose
  to the contrary that $\lambda_1=\lambda_2$ so that $\lambda_1-\lambda_2=0$ is even. Then by Lemma \ref{lem-E3n-check} we have
  $$\lambda \notin E_{3,\lambda_1}\Rightarrow \lambda_1-\lambda_3 \not\in \None \Rightarrow \lambda_1-\lambda_3=1.$$
  Now $\lambda_2+\lambda_3=2\lambda_1-1$ is odd, and $(\lambda_1-1,\lambda_2,\lambda_3+1)
  =(\lambda_1-1,\lambda_1,\lambda_1)$ is not a partition (so not in $E_{3,\lambda_1-1}$).
      This contradicts the definition of $\varphi$.

  \item [case 3.] If $\varphi(\lambda)=(\lambda_1,\lambda_2,\lambda_3+1)$, then we need to show that $\lambda_2 > \lambda_3$. Suppose to the contrary that $\lambda_2=\lambda_3$.
   Since $\lambda_2+\lambda_3=2\lambda_2$ is even, then by definition of $\varphi$, we have $\rho=(\lambda_1-1,\lambda_2+1,\lambda_2)\in E_{3,\lambda_1-1}$, which implies that $\rho_1-\rho_2=\lambda_1-\lambda_2-2$ is even and
  $2\rho_2-\rho_1-\rho_3= 3-(\lambda_1-\lambda_2)\in \None$. Clearly, no such $\lambda_1-\lambda_2$ exists.
\end{enumerate}

Secondly, we show $\varphi(\lambda)\notin S_{3,n}$ by contradiction. If $\varphi(\lambda)\in S_{3,n}$ then there exists $k,\ell(\neq 1) \in \mathbb{N}^+$ such that $\varphi(\lambda)=(4k+\ell,2k,0)$. We discuss in 3 cases as follow.

\begin{enumerate}
  \item [Case 1.] If $\varphi(\lambda)=(\lambda_1+1,\lambda_2,\lambda_3)$ then $\lambda=(4k+\ell-1,2k,0)$. Now $\lambda\in E_{3,\lambda_1}$
  implies that $\lambda_1-\lambda_2=2k+\ell-1$ is even and $2\lambda_2-\lambda_1-\lambda_3=1 -\ell \in \None \Rightarrow \ell=1$. This is clearly impossible.
  \item [Case 2.] If $\varphi(\lambda)=(\lambda_1,\lambda_2+1,\lambda_3)$ then $\lambda=(4k+\ell,2k-1,0)$. By $\lambda_2+\lambda_3=2k-1$, we get $\rho=(\lambda_1-1,\lambda_2,\lambda_3+1)\in E_{3,\lambda_1-1}$, which implies that $2\rho_2-\rho_1-\rho_3=-\ell-2\in \None \Rightarrow \ell \le -2$. This is clearly impossible.
  \item [Case 3.] If $\varphi(\lambda)=(\lambda_1,\lambda_2,\lambda_3+1)$ then $\lambda=(4k-1-\ell,2k,-1)$. We get $\lambda$ is not a valid partition.
\end{enumerate}
 \end{proof}

\begin{lem}
 The the map $\varphi$ is one-to-one.
\end{lem}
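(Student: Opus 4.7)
The plan is to prove that $\varphi(\lambda)=\varphi(\mu)=\nu$ forces $\lambda=\mu$ by a case analysis on the four defining clauses of $\varphi$, which I label (i)--(iv) in the order listed in Theorem \ref{theo-varphi}. If $\lambda$ and $\mu$ trigger the same clause, then the cell added to form $\nu$ comes from a fixed row, so both preimages are recovered uniquely by deleting that cell. The remaining task is to show that any two distinct clauses produce disjoint images; this amounts to checking the six unordered pairs.

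Four of the six pairs fall immediately to a single parity or membership invariant. Clause (i) forces $\nu_1-\nu_2$ to be odd (since $\lambda\in E_{3,\lambda_1}$ forces $\lambda_1-\lambda_2$ even), while clause (iii) forces $\nu_1-\nu_2$ even (since its hypothesis $(\lambda_1-1,\lambda_2,\lambda_3+1)\in E_{3,\lambda_1-1}$ requires $\lambda_1-\lambda_2-1$ even). Clause (ii) always produces $\nu_2+\nu_3$ odd, whereas clauses (iii) and (iv) produce $\nu_2+\nu_3$ even. These parities rule out the pairs (i)--(iii), (ii)--(iii), and (ii)--(iv). Next, pairs (i)--(ii) and (i)--(iv) are handled by inspecting $(\nu_1-1,\nu_2,\nu_3)$: clause (i) gives $(\nu_1-1,\nu_2,\nu_3)=\lambda\in E_{3,\nu_1-1}$, while both (ii) and (iv) explicitly declare the same partition to lie outside $E_{3,\nu_1-1}$.

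The lone delicate pair is (iii)--(iv), where both outputs satisfy $\nu_1-\nu_2$ even and $\nu_2+\nu_3$ even, so no surface invariant separates them. Here I would assume $\nu$ comes from $\mu=(\nu_1,\nu_2-1,\nu_3)$ via clause (iii) and simultaneously from $\mu'=(\nu_1,\nu_2,\nu_3-1)$ via clause (iv), then derive a contradiction. Applying Lemma \ref{lem-E3n-check} to the clause (iii) hypothesis $(\nu_1-1,\nu_2-1,\nu_3+1)\in E_{3,\nu_1-1}$ yields $\nu_1-\nu_2$ even and $2\nu_2-\nu_1-\nu_3-2\in\None$. Adding $3$ gives $2\nu_2-\nu_1-\nu_3+1\in\None$, using that $\None=\{0,2,3,4,\ldots\}$ is closed under adding $3$. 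But this is exactly the second condition of $\mu'\in E_{3,\nu_1}$, and the first condition $\mu'_1-\mu'_2=\nu_1-\nu_2$ even has already been verified. Hence $\mu'\in E_{3,\nu_1}$, contradicting the clause (iv) requirement $\mu'\notin E_{3,\mu'_1}$.

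The main obstacle is precisely this last incompatibility, since it is the only pair not eliminated by a one-line parity or $E$-membership argument; it hinges on the easily overlooked arithmetic fact $\None+3\subset\None$. Once the six pairs are dispatched, $\varphi$ is injective, and combined with the preceding well-definedness lemma together with $|E_{3,n}|=|S_{3,n}|$, the map $\varphi$ is a bijection from $L(3,n)\setminus E_{3,n}$ to $L(3,n)\setminus S_{3,n}$, as required.
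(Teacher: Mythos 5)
Your proposal is correct and takes essentially the same approach as the paper: the paper organizes the analysis by which two corners of $\nu=\varphi(\mu)=\varphi(\nu')$ were added (three cases), while you organize by pairs of defining clauses, but the invariants invoked are identical --- the parity of $\nu_1-\nu_2$, the parity of $\nu_2+\nu_3$, the membership of $(\nu_1-1,\nu_2,\nu_3)$ in $E_{3,\nu_1-1}$, and, for the one delicate pair (iii)--(iv), the same shift-by-$3$ computation in $\None$ that closes the paper's Case 3.
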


\begin{proof}
We prove by contradiction. Suppose there is a partition
 $\lambda=(\lambda_1,\lambda_2,\lambda_3) \in L(3,n)  $ such that
 $\varphi(\mu)=\varphi(\nu)=\lambda$. Then $\mu$ and $\nu$ are both obtained from $\lambda$ by
 removing one of its corners. This is divided in to 3 cases. \par
\begin{enumerate}
\item [\textbf{Case 1:}] Let $\mu=(\lambda_1 -1,\lambda_2,\lambda_3)$, $\nu=(\lambda_1,\lambda_2 -1,\lambda_3)$ with $\varphi(\mu)=\varphi(\nu)=\lambda$.

On one hand, $\varphi(\mu)=\lambda$ implies that $\mu\in E_{3,\lambda_1-1}$.
Thus $\mu_1-\mu_2=\lambda_1 -\lambda_2-1$ is even;%, and $\mu^*_1-2\mu^*_2=2\lambda_2-\lambda_1 -\lambda_3+1\in \None$.

On the other hand, $\varphi(\nu)=\lambda$ implies that: i) If $\nu_2 + \nu_3=(\lambda_2 -1)+\lambda_3$ is even,
then $\rho=(\nu_1-1,\nu_2+1,\nu_3)=(\lambda_1-1,\lambda_2,\lambda_3)=\mu \notin E_{3,\lambda_1-1}$, which conflicts with $\mu\in E_{3,\lambda_1-1}$.
ii) If $\nu_2 + \nu_3=(\lambda_2 -1)+\lambda_3$ is odd, then $\omega=(\nu_1-1,\nu_2,\nu_3+1)=(\lambda_1-1,\lambda_2-1,\lambda_3+1)\in E_{3,\nu_1-1}$.
We get $\omega_1-\omega_2=\lambda_1-\lambda_2$ is even,
which conflicts with that $\mu_1-\mu_2=\lambda_1-\lambda_2-1$ is even.

\item [\textbf{Case 2:}] Let $\mu=(\lambda_1 -1,\lambda_2,\lambda_3), \nu=(\lambda_1,\lambda_2,\lambda_3-1)$ with $\varphi(\mu)=\varphi(\nu)=\lambda$.
On one hand, $\varphi(\mu)=\lambda$ implies that $\mu \in E_{3,\lambda_1-1}$; On the other hand,
$\varphi(\nu)=\lambda$ implies that $\rho=(\nu_1-1,\nu_2,\nu_3+1)=(\lambda_1-1,\lambda_2,\lambda_3)=\mu \notin E_{3,\lambda_1-1}$. This is a contradiction.

\item [\textbf{Case 3:}] Let $\mu=(\lambda_1,\lambda_2-1,\lambda_3), \nu=(\lambda_1,\lambda_2,\lambda_3-1)$ with $\varphi(\mu)=\varphi(\nu)=\lambda$.
Now $\varphi(\nu)=\lambda$ implies that $\nu_2+\nu_3(=\mu_2+\mu_3) \not\equiv 0\pmod2$. That is, $\lambda_2+\lambda_3$ is even.
Note also that $\nu \not\in E_{3,\nu_1}$ implies that either $\nu_1-\nu_2$ is odd or $2\nu_2-\nu_1-\nu_3\not\in \None$.

But by $\varphi(\mu)=\lambda$, we have $\omega=(\mu_1-1,\mu_2,\mu_3+1)\in E_{3,\mu_1-1}$. This implies that $\omega_1-\omega_2=\lambda_1-\lambda_2=\nu_1-\nu_2$ is even, and $2\omega_2-\omega_1-\omega_3=2\lambda_2-\lambda_1-\lambda_3-2 \in \None$. Hence $2\nu_2-\nu_1-\nu_3=2\lambda_2-\lambda_1-\lambda_3+1 \in \None$.
\end{enumerate}
 \end{proof}

\subsection{The involutions $^*\varphi$ and $\varphi^*$}
The two involutions we discovered indeed state that the following is an identity map:
$$ L(3,n)\setminus E_{3,n}  \mathop{\longrightarrow}\limits^\varphi L(3,n)\setminus S_{3,n}  \mathop{\longrightarrow}\limits^*
L(3,n)\setminus E_{3,n}  \mathop{\longrightarrow}\limits^\varphi L(3,n)\setminus S_{3,n}  \mathop{\longrightarrow}\limits^* L(3,n)\setminus E_{3,n}.$$
More precisely, we have the following.
\begin{theo}
The map $^*\varphi$ defined by $^*\varphi(\lambda)= [\varphi(\lambda)]^*$ is an involution from $L(3,n)\setminus E_{3,n}$ to itself. That is,
for any $\lambda \in L(3,n)\setminus E_{3,n}    $, we have $[\varphi([\varphi(\lambda)]^*)]^*=\lambda$, or equivalently, $\varphi([\varphi(\lambda)]^*)=\lambda^*$.
\end{theo}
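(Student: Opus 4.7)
The plan is a case analysis on the four cases in the definition of $\varphi$. Fix $\lambda\in L(3,n)\setminus E_{3,n}$ and set $\mu=\varphi(\lambda)^*$; the goal is to verify $\varphi(\mu)=\lambda^*$. A key preliminary observation is that if we write $\varphi(\lambda)=\lambda+e_i$, where $e_i$ denotes the increment by $1$ in the $i$-th coordinate, then because duality reverses the three coordinates we have $\mu=\lambda^*-e_{4-i}$. Hence $\varphi(\mu)=\lambda^*$ is equivalent to saying that $\varphi$ increments $\mu$ in coordinate $4-i$. Anticipating the pairing: the first case ($\lambda\in E_{3,\lambda_1}$, so $i=1$) should pair with the fourth case at $\mu$; the fourth case at $\lambda$ ($i=3$) should pair with the first case at $\mu$; and the two middle cases (both $i=2$) should pair among themselves.

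The first case is representative. When $\lambda\in E_{3,\lambda_1}$ we have $\mu=(n-\lambda_3,n-\lambda_2,n-\lambda_1-1)$, and a direct computation gives
$$\mu_1-\mu_2=\lambda_2-\lambda_3,\qquad 2\mu_2-\mu_1-\mu_3=-(2\lambda_2-\lambda_1-\lambda_3)+1.$$
Since $2\lambda_2-\lambda_1-\lambda_3\in\None$ by hypothesis, the right-hand side of the second identity equals $1$ or is negative, and in either case lies outside $\None$; by Lemma~\ref{lem-E3n-check} this forces $\mu\notin E_{3,\mu_1}$. The same quantity reappears when one tests $(\mu_1-1,\mu_2,\mu_3+1)$ against $E_{3,\mu_1-1}$, so that membership also fails. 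Using that $\lambda_1-\lambda_2$ is even, one has $\mu_2+\mu_3=2n-\lambda_1-\lambda_2-1$ odd. Thus the fourth case of $\varphi$ applies at $\mu$ and gives $\varphi(\mu)=\mu+e_3=\lambda^*$. The remaining three cases are handled by the same strategy: compute $\mu_1-\mu_2$, $\mu_2+\mu_3$, and $2\mu_2-\mu_1-\mu_3$ in terms of the corresponding quantities for $\lambda$, and then read off which case of the definition applies at $\mu$.

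The main obstacle I foresee is the bookkeeping for the two middle cases, which both increment $\lambda$ in coordinate $2$: showing that duality pairs them among themselves (possibly swapping them) requires carefully matching both the parity of $\mu_2+\mu_3$ and the shifted $E$-membership conditions at $\mu$ with the corresponding data at $\lambda$. Here the Remark is essential: it rules out the otherwise missing fifth case, ensuring that the two $i=2$ cases at $\mu$ partition the images of the two $i=2$ cases at $\lambda$ without mismatch. Once this bookkeeping goes through in all four cases, the identity $\varphi(\mu)=\lambda^*$ holds uniformly, and $^*\varphi$ is an involution.
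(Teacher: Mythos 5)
Your setup and case-pairing are exactly those of the paper's own proof: set $\mu=[\varphi(\lambda)]^*$, show $\varphi(\mu)=\lambda^*$, with the $i=1$ case landing in the $i=3$ case at $\mu$, the $i=3$ case landing in the $i=1$ case at $\mu$, and the two $i=2$ branches pairing among themselves (with swapping). Your Case 1 computation is correct and coincides with the paper's. However, what you have written is not a complete proof: the cases you defer are the bulk of the work, and ``the same strategy'' does not carry them through without a further ingredient that is absent from your sketch. In the $i=2$ cases, the parity of $\mu_2+\mu_3=2n-\lambda_1-\lambda_2-1$ is \emph{not} determined by which branch applied at $\lambda$ (e.g.\ both $\lambda=(2,1,1)$ and $\lambda=(3,1,1)$ fall in the second branch, but give opposite parities), so one must split into subcases; and in the subcase where $\mu_2+\mu_3$ is odd, the goal becomes a \emph{positive} membership claim, $(\mu_1-1,\mu_2,\mu_3+1)\in E_{3,\mu_1-1}$, rather than another non-membership. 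Proving it requires the parity observation that $\lambda_1+\lambda_3=(\lambda_1-\lambda_2)+(\lambda_2+\lambda_3)$ is even, which upgrades the negative information $2\lambda_2-\lambda_1-\lambda_3\notin \None$ (a value in $\{1,-1,-2,-3,\dots\}$) to the exact conclusion $\lambda_1+\lambda_3-2\lambda_2-2\in\{0,2,4,\dots\}\subset\None$. Nothing in your writeup produces this step, and it is the crux of the middle cases.

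Moreover, your appeal to the Remark is not the right tool. The Remark only establishes that the four branches of $\varphi$ are exhaustive on $L(3,n)\setminus E_{3,n}$ (well-definedness, already used in Theorem \ref{theo-varphi}); it does not by itself ``ensure the two $i=2$ cases at $\mu$ partition the images without mismatch.'' Even if you argue indirectly --- rule out increments in coordinates $1$ and $3$ at $\mu$ and invoke totality --- ruling out the coordinate-$3$ increment in the odd-parity subcase is literally the same positive membership verification as above, so no work is saved. The same issue recurs in your deferred $i=3$ case: there the goal is again positive, namely $\mu\in E_{3,\mu_1}$, and one must combine non-membership at $\lambda$ and at $(\lambda_1-1,\lambda_2,\lambda_3+1)$ --- using whichever of the two has even first difference, according to the parity of $\lambda_1-\lambda_2$ --- to get $2\lambda_2-\lambda_1-\lambda_3\notin\None$ and hence $2\mu_2-\mu_1-\mu_3=\lambda_1+\lambda_3-2\lambda_2+1\in\None$. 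Until these verifications are written out, the proof has a genuine gap, even though the overall plan is sound and, once completed, would reproduce the paper's argument.
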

\begin{proof}
Let $\mu=[\varphi(\lambda)]^*$. We show that $\varphi(\mu)=\lambda^*$ by the following three cases.

Case 1: $\varphi(\lambda)=(\lambda_1+1,\lambda_2,\lambda_3)$ so that $\mu=(n-\lambda_3,n-\lambda_2,n-\lambda_1-1)$. This only happen when $\lambda\in E_{3,\lambda_1}$.
We need to show that $\mu\not\in E_{3,\mu_1}$, $\mu_2+\mu_3$ is odd, and $\rho=(\mu_1-1,\mu_2,\mu_3+1)\not\in E_{3,\mu_1-1}$.
By $\lambda\in E_{3,\lambda_1}$, $\lambda_1-\lambda_2$ is even, and $2\lambda_2-\lambda_1-\lambda_3\in \None$. To show that
$\mu\not\in E_{3,\mu_1}$, we observe that $2\mu_2-\mu_1-\mu_3=\lambda_1+\lambda_3-2\lambda_2+1 \left(\in \{1,-1,-2,\dots\} \right)$ cannot belong to
$\None$; $\mu_2+\mu_3=2n-\lambda_1-\lambda_2-1$ is odd; $2\rho_2-\rho_1-\rho_3=\lambda_1+\lambda_3-2\lambda_2+1$ cannot belong to $\None$ again.

\medskip
Case 2: $\varphi(\lambda)=(\lambda_1,\lambda_2+1,\lambda_3)$ so that $\mu=(n-\lambda_3,n-\lambda_2-1,n-\lambda_1)$.
We need to show that $\varphi(\mu)=(\mu_1,\mu_2+1,\mu_3)=\lambda^*$. By definition of $\varphi$, we have to consider the following two subcases.

\begin{enumerate}

\item [a)] When $\lambda_2+\lambda_3$ is even, $\lambda\notin E_{3,\lambda_1}$, and $(\lambda_1-1,\lambda_2+1,\lambda_3)\notin E_{3,\lambda_1-1}$:
Firstly $\mu\not\in E_{3,\mu_1}$ since $\mu_1-\mu_2=\lambda_2-\lambda_3+1$ is odd. Next we have to consider the following two cases.

\begin{enumerate}
  \item [(i)] If $\mu_2+\mu_3$ is even, then we need to show that $\rho=(\mu_1-1,\mu_2+1,\mu_3)\notin E_{3,\mu_1-1}$. This is obvious since $\rho_1-\rho_2=\lambda_2-\lambda_3-1$ is  odd.
  \item [(ii)] If $\mu_2+\mu_3=2n-\lambda_1-\lambda_2-1$ is odd, then we need to show that $\rho=(\mu_1-1,\mu_2,\mu_3+1)\in E_{3,\mu_1-1}$.
Firstly, $\rho_1-\rho_2=\mu_1-\mu_2-1=\lambda_2-\lambda_3$ is even. Next we show that $2\rho_2-\rho_1-\rho_3=\lambda_1+\lambda_3-2\lambda_2-2\in \None$.
By $\lambda \notin E_{3,\mu_1}$ and $\lambda_1-\lambda_2\equiv 0 \pmod2$, we have $2\lambda_2-\lambda_1-\lambda_3\notin \None$. This implies that
$\lambda_1+\lambda_3-2\lambda_2-2 \in \{-3,-1,0,1,2,\dots\}$. The proof is then completed by the fact that $\lambda_1+\lambda_3=(\lambda_2+\lambda_3)+(\lambda_1-\lambda_2)$ is even.
\end{enumerate}

\item [b)] When $\lambda_2+\lambda_3$ is odd, $\lambda\notin E_{3,\lambda_1}$, and $\omega=(\lambda_1-1,\lambda_2,\lambda_3+1)\in E_{3,\lambda_1-1}$:
We get $\omega_1-\omega_2=\lambda_1-\lambda_2-1$ is even. Then $\mu_2+\mu_3=2n-\lambda_1-\lambda_2-1$ is even. So we need to that $\mu \notin E_{3,\mu_1}$ and $\rho=(\mu_1-1,\mu_2+1,\mu_3)\notin E_{3,\mu_1-1}$,
we have $2\omega_2-\omega_1-\omega_3=2\lambda_2-\lambda_1-\lambda_3 \in \None$. Thus $2\rho_2-\rho_1-\rho_3=\lambda_1+\lambda_3-2\lambda_2+1 \notin \None$ and $2\mu_2-\mu_1-\mu_3=\lambda_1+\lambda_3-2\lambda_2-2 \notin \None$.
Hence $\mu \notin E_{3,\mu_1}$ and $\rho\notin E_{3,\mu_1-1}$.

\end{enumerate}

\medskip
Case 3: $\varphi(\lambda)=(\lambda_1,\lambda_2,\lambda_3+1)$ so that $\mu=(n-\lambda_3-1,n-\lambda_2,n-\lambda_1)$. This only happens when $\lambda \not\in E_{3,\lambda_1} $, $\lambda_2+\lambda_3$ is odd, and $\rho=(\lambda_1 -1,\lambda_2,\lambda_3+1) \notin E_{3,\lambda_1 -1}$. We need to show that $\mu \in E_{3,\mu_1}$. Firstly $\mu_1-\mu_2=\lambda_2-\lambda_3-1$ is even. Secondly by $\rho=(\lambda_1 -1,\lambda_2,\lambda_3+1) \notin E_{3,\lambda_1 -1}$ and $\lambda \notin E_{3,\lambda_1}$, we get $2\rho_2-\rho_1-\rho_3=2\lambda_2-\lambda_1-\lambda_3\notin \None$. Thus $2\mu_2-\mu_1-\mu_3=\lambda_1+\lambda_3-2\lambda_2+1 \in \None$.
\end{proof}

Similarly, we have the following.
\begin{cor}
The map $\varphi^*$ defined by $\varphi^*(\lambda)= \varphi(\lambda^*)$ is an involution from $L(3,n)\setminus S_{3,n}$. %That is,
%for any $\lambda \in L(3,n)\setminus E_{3,n}    $, we have $[\varphi([\varphi(\lambda)]^*)]^*=\lambda$, or equivalently, $\varphi([\varphi(\lambda)]^*)=\lambda^*$.
\end{cor}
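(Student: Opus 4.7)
The plan is to derive this corollary directly from the preceding theorem about $^*\varphi$ together with the involutive nature of duality, so essentially no new combinatorial analysis is needed. First I would check that $\varphi^*$ is well-defined on $L(3,n)\setminus S_{3,n}$. Since $E_{3,n}=S_{3,n}^*$ by \eqref{e-E3n} and $\lambda\mapsto\lambda^*$ is an involution of $L(3,n)$, we have $\lambda\in L(3,n)\setminus S_{3,n}$ if and only if $\lambda^*\in L(3,n)\setminus E_{3,n}$. By Theorem \ref{theo-varphi} the partition $\varphi(\lambda^*)$ lies in $L(3,n)\setminus S_{3,n}$, so $\varphi^*(\lambda)=\varphi(\lambda^*)$ indeed maps $L(3,n)\setminus S_{3,n}$ to itself.

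Next I would verify the involution property by direct unwinding of definitions. For $\lambda\in L(3,n)\setminus S_{3,n}$, set $\mu=\lambda^*\in L(3,n)\setminus E_{3,n}$. Then
$$\varphi^*(\varphi^*(\lambda))=\varphi^*(\varphi(\mu))=\varphi\bigl([\varphi(\mu)]^*\bigr).$$
By the preceding theorem on $^*\varphi$, the identity $\varphi([\varphi(\mu)]^*)=\mu^*$ holds for all $\mu\in L(3,n)\setminus E_{3,n}$. Applying this to our $\mu=\lambda^*$ gives $\varphi([\varphi(\mu)]^*)=\mu^*=\lambda^{**}=\lambda$, which is exactly what we want.

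There is no real obstacle here: the previous theorem was the substantive case-analysis, and the present statement is its dual via conjugation by $*$. If desired, I could also present this more symmetrically as the commutative diagram
$$L(3,n)\setminus E_{3,n}\xrightarrow{\varphi} L(3,n)\setminus S_{3,n}\xrightarrow{*}L(3,n)\setminus E_{3,n}\xrightarrow{\varphi} L(3,n)\setminus S_{3,n}\xrightarrow{*}L(3,n)\setminus E_{3,n},$$
whose four-fold composition is the identity by Theorem on $^*\varphi$; reading this diagram starting from $L(3,n)\setminus S_{3,n}$ (by pre-composing with $*$) gives exactly the involution statement for $\varphi^*$. Hence the corollary follows with essentially no computation beyond what has already been carried out.
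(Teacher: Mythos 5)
Your proof is correct and takes essentially the same route as the paper: the paper states this corollary with only the word ``Similarly,'' intending exactly the deduction you give, namely that $*$ is an involution exchanging $S_{3,n}$ and $E_{3,n}$ (so $\varphi^*$ is well-defined on $L(3,n)\setminus S_{3,n}$) combined with the identity $\varphi([\varphi(\mu)]^*)=\mu^*$ from the theorem on $^*\varphi$. Your unwinding $\varphi^*(\varphi^*(\lambda))=\varphi([\varphi(\lambda^*)]^*)=\lambda^{**}=\lambda$ is precisely the four-arrow composition the paper displays before that theorem, so there is nothing to add.
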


Thus we have an alternative way to compute $\varphi^{-1}$ by $\varphi^{-1}(\mu)=\left[\varphi(\mu^*)  \right]^*$.

\subsection{The Greedy Algorithm}
Our discovery of $\varphi$ results from the greedy algorithm, which approximates a global
optimal solution by a local optimal solution.

Below we describe explicitly how to use the greedy algorithm to find a possible order matching from $L_i(m,n)$ to $L_{i+1}(m,n)$.

\noindent
\textbf{Algorithm GA}

\noindent
\textbf{Input}: $L_i(m,n)$ and $L_{i+1}(m,n)$ ordered increasingly. Here we choose the lexicographic order, still denoted ``$\leq$": For $\lambda=(\lambda_1,\lambda_2,\cdots,\lambda_m)\ne \mu=(\mu_1,\mu_2,\cdots,\mu_m)\in L_i(m,n)$,
find the smallest $s$ such that $\lambda_s\ne \mu_s$, then $\lambda>\mu$ if $\lambda_s>\mu_s$ and $\lambda<\mu$ if $\lambda_s<\mu_s$.
For instance, in $L_5(3,5)$ we have $221<311<32<41<5$.

\noindent
\textbf{Output}: Associate each $\lambda^j\in L_i(m,n)$ at most one $\mu^j\in L_{i+1}(m,n)$ denoted $GA(\lambda^j)$ that covers $\lambda^j$ (in the Young's lattice).
Note that we allow $GA(\lambda^j)$ does not exist. If every $\lambda^j$ has a $GA$ image, then
$GA$ is an injection; If every $\mu^j$ has a pre-image, then $GA$ is a surjection; Otherwise, we only obtain a partial matching.

Assume partitions in $L_i(m,n)$ are listed as
$\lambda^1< \cdots< \lambda^N$.
We successively construct $\mu^j$ for $j$ from 1 to $N$ such that $\mu^j$ cover $\lambda^j$ as follows.

For $\lambda^1$, match it with the smallest partition $\mu^1\in L_{i+1}(m,n)$ that covers $\lambda^1$.

Suppose that we have constructed $\mu^1,\dots, \mu^{j-1}$.
Then we greedily match $\lambda^j$ with the smallest partition $\mu^j$ that covers $\lambda^j$ and is not in $\{\mu^1,\dots, \mu^{j-1}\}$.
When no such $\mu^j$ exists, we say $GA(\lambda^j)$ does not exist.

\medskip
The greedy algorithm is easy to perform by computer. We find desired order matching for $m\le 4$, but fail for $m\ge 5$.

\begin{prop}
The map $\varphi$ agrees with the greedy algorithm:
\begin{enumerate}
\item $GA(\lambda)$ is well defined if and only if $\lambda \not\in E_{3,n}$.

\item If $\lambda \not\in E_{3,n}$ then $\varphi(\lambda)=GA(\lambda)$.
\end{enumerate}
\end{prop}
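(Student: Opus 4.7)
The plan is to prove both parts together by induction on $\lambda$ in lexicographic order within each rank level $L_i(3,n)$. Writing $L_i(3,n)$ as $\lambda^1<\lambda^2<\cdots<\lambda^N$, the inductive hypothesis reads: for every $k<j$, either $\lambda^k\in E_{3,n}$ with $GA(\lambda^k)$ undefined, or $\lambda^k\notin E_{3,n}$ with $GA(\lambda^k)=\varphi(\lambda^k)$. Consequently, a cover $\mu$ of $\lambda^j$ is still available to GA if and only if either $\mu\in S_{3,n}$ (so $\mu$ has no $\varphi$-preimage) or its unique $\varphi$-preimage $\varphi^{-1}(\mu)$ is not strictly lex-smaller than $\lambda^j$.

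Assume first $\lambda^j\notin E_{3,n}$. Injectivity of $\varphi$ gives immediately that $\varphi(\lambda^j)$ is available, since its only preimage is $\lambda^j$ itself. The content is to show every cover $\mu<\varphi(\lambda^j)$ is unavailable. Inspecting the three possible covers $(\lambda^j_1,\lambda^j_2,\lambda^j_3+1)$, $(\lambda^j_1,\lambda^j_2+1,\lambda^j_3)$, $(\lambda^j_1+1,\lambda^j_2,\lambda^j_3)$ of $\lambda^j$, the possible $\varphi$-preimages of such a $\mu$ are the other (at most two) partitions covered by $\mu$; every such candidate is lex-smaller than $\lambda^j$ except the single exception $\lambda'':=(\lambda^j_1,\lambda^j_2+1,\lambda^j_3-1)$, which is a candidate preimage of $\mu=(\lambda^j_1,\lambda^j_2+1,\lambda^j_3)$. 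Short parity checks also confirm that the relevant covers do not belong to $S_{3,n}$ in the cases that matter (for instance $(\lambda^j_1,\lambda^j_2,\lambda^j_3+1)$ has positive third part, so automatically lies outside $S_{3,n}$), so the required preimages truly exist.

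The main obstacle is excluding $\lambda''$ precisely when $\lambda^j\in E_{3,\lambda^j_1}$, i.e.\ when $\varphi(\lambda^j)=(\lambda^j_1+1,\lambda^j_2,\lambda^j_3)$ is the top cover and $\mu=(\lambda^j_1,\lambda^j_2+1,\lambda^j_3)$ is a smaller competitor. For this I would invoke Lemma~\ref{lem-E3n-check}: from $\lambda^j_1-\lambda^j_2$ even and $2\lambda^j_2-\lambda^j_1-\lambda^j_3\in\None$, a brief parity argument forces $2\lambda^j_2-\lambda^j_1-\lambda^j_3\geq 3$ whenever $\lambda^j_2+\lambda^j_3$ is odd, and hence $(\lambda^j_1-1,\lambda^j_2+1,\lambda^j_3)\in E_{3,\lambda^j_1-1}$. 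This makes the third condition of the fourth branch of the definition of $\varphi$ fail at $\lambda''$, so Case~4 cannot produce $\mu$ from $\lambda''$; the other three branches manifestly yield images whose first or second coordinate differs from that of $\mu$. Thus $\varphi(\lambda'')\neq\mu$, and $\mu$'s actual preimage must be the lex-smaller $(\lambda^j_1-1,\lambda^j_2+1,\lambda^j_3)$.

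Finally, when $\lambda^j\in E_{3,n}$, write $\lambda^j=(n,n-2k,n-4k-\ell)$ via $E_{3,n}=S_{3,n}^*$ and run a parallel analysis. The same type of parity computation kills the lone lex-larger candidate preimage for each cover, so every cover of $\lambda^j$ has its $\varphi$-preimage strictly lex-smaller than $\lambda^j$. Hence no cover of $\lambda^j$ is available to GA, so $GA(\lambda^j)$ is undefined, completing the induction.
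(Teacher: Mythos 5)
Your proof is correct: the checks you defer all go through, and I verified the two crucial ones (the exclusion of $\lambda''$ when $\lambda^j\in E_{3,\lambda^j_1}$, and the parallel analysis when $\lambda^j\in E_{3,n}$). It follows the same skeleton as the paper --- induction on lexicographic order within a rank, with the hypothesis that GA has so far agreed with $\varphi$ --- but the inductive step is carried out by a genuinely different mechanism. The paper works forward: for each of the four classes $F_1$, $F_2^e$, $F_2^o$, $F_3^o$ cut out by the branches of $\varphi$, it names the lex-smaller partition occupying each competing cover and then verifies with Lemma \ref{lem-E3n-check} which branch of $\varphi$ applies to that partition; this costs several case analyses (e.g.\ $F_2^e$ splits according to whether $(\lambda_1,\lambda_2-1,\lambda_3+1)\in E_{3,\lambda_1}$, and $F_1$ requires verifying $(\lambda_1-1,\lambda_2+1,\lambda_3)\in F_1$ and $(\lambda_1,\lambda_2-1,\lambda_3+1)\in F_2^e\cup F_2^o$). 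You work backward, exploiting the full bijection statement of Theorem \ref{theo-varphi}: a competing cover outside $S_{3,n}$ has a unique $\varphi$-preimage, necessarily among the at most two non-$\lambda^j$ partitions it covers, so everything reduces to killing the single lex-larger candidate $\lambda''$, which you do with one forward computation. This buys real economy --- the paper's subcases collapse, the treatments of $F_2^e$ and $F_2^o$ unify, and your argument even absorbs the base case $j=1$, since there the ``lex-smaller preimage'' conclusion shows no cover below $\varphi(\lambda^1)$ can exist at all --- at the price of invoking surjectivity of $\varphi$ onto $L(3,n)\setminus S_{3,n}$, which the paper obtains only indirectly from injectivity plus $|E_{3,n}|=|S_{3,n}|$; the paper's forward route, in return, identifies the explicit occupier of every cover, information that reappears in its chain-tableau analysis. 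One small item to add when writing yours up: your ``for instance'' only dispatches the cover $(\lambda_1,\lambda_2,\lambda_3+1)$; the cover $(\lambda_1,\lambda_2+1,\lambda_3)$ also needs its membership in $S_{3,n}$ excluded when $\lambda^j\in E_{3,\lambda_1}$, which is again one line: $\lambda_3=0$, $\lambda_2+1=2k$, $\lambda_1=4k+\ell$ would force $2\lambda_2-\lambda_1-\lambda_3=-\ell-2\notin\None$, contradicting $\lambda^j\in E_{3,\lambda_1}$.
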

\begin{proof}
We prove by induction on $j$ that $\varphi(\lambda^j)=GA(\lambda^j)$.

The base case is routine:
Suppose $i=3q+r$ with $0\le r\le 2$. Then according to $r=0,1,2$, $\lambda^1$ equals $(q,q,q)$, $(q+1,q,q)$, $(q+1,q+1,q)$, with their $\varphi$ images
$(q+1,q,q)$, $(q+1,q+1,q)$ $ (q+1,q+1,q+1)$, respectively. These are exactly the choices of the Greedy Algorithm. %
%ThObserve that
%$GA((q,q,q))=(q+1,q,q)=\varphi(q,q,q)$, $GA((q+1,q,q))=(q+1,q+1,q)=\varphi(q+1,q,q)$, and $GA((q+1,q+1,q))==\varphi(q+1,q+1,q)$, where
%$(n+1,n,n)$ is treated as not defined.
%,
%i) If $r=0$ then $\lambda^1=(q,q,q)$. The case $q=n$ is obvious, so assume $q<n$.
%We also have $\varphi(\lambda^1)=(q+1,q,q)=GA(\lambda^1)$ since $(q,q,q)\in E_{3,q}$, which is checked by the fact $(q,q,q)^{\bar *}=(0,0,0)\in S_{3,q}$;
%ii) If $r=1$ then $\lambda^1=(q+1,q,q)$ and $GA(\lambda^1)=(q+1,q+1,q)$. Firstly, $\varphi(\lambda^1)\neq (q+2,q,q)$ since $(q+1,q,q)\notin E_{3,q+1}$, which is checked by the fact $(q+1,q,q)^{\bar *}=(1,1,0)\notin S_{3,q+1}$.
%Secondly, by $q+q\equiv 0 \pmod2$ and $(q,q+1,q)\notin E_{3,q}$, we see that $\varphi(\lambda^1)=(q+1,q+1,q)=GA(\lambda^1)$;
%iii) If $r=2$ then $\lambda^1=(q+1,q+1,q)$ and $GA(\lambda^1)=(q+1,q+1,q+1)$. Firstly, $\varphi(\lambda^1)\neq (q+2,q+1,q)$ since $(q+1,q+1,q)\notin E_{3,q+1}$, which is checked by the fact
%$(q+1,q+1,q)^{\bar *}=(1,0,0)\notin S_{3,q+1}$. Secondly, $\varphi(\lambda^1)\neq (q+1,q+2,q)$ because of the condition $q+1+q\neq 0 \pmod2$ and $(q,q+1,q+1)\notin E_{3,q}$.
%The same condition infect implies that $\varphi(\lambda^1)=(q+1,q+1,q+1)=GA(\lambda^1)$.

Now assume by induction that $\varphi(\lambda^j)=GA(\lambda^j)$ for $j\le M-1$.
By Theorem \ref{theo-varphi} $\varphi(\lambda)$ is a valid partition, and
                   $\varphi(\lambda)\neq \varphi(\lambda^i)$ for $i<M$. We need to show that
this is exactly the choice of the Greedy Algorithm.

It is convenient to use the following notation according to the definition of $\varphi$:
\begin{align*}
  F_1=&\{\lambda: \lambda \in E_{3,\lambda_1}  \};\\
  F_{2}^e=&\{\lambda:\lambda_2+\lambda_3 \equiv 0 \pmod2 \text{ and }  (\lambda_1 -1,\lambda_2+1,\lambda_3)\notin E_{3,\lambda_1 -1} \text{ and } \lambda \notin E_{3,\lambda_1}\};\\
  F_2^o=& \{\lambda:\lambda_2+\lambda_3 \not\equiv 0 \pmod2 \text{ and }  (\lambda_1 -1,\lambda_2,\lambda_3+1) \in E_{3,\lambda_1 -1} \text{ and } \lambda \notin E_{3,\lambda_1}\}; \\
  F_3^o=&\{\lambda:\lambda_2+\lambda_3 \not\equiv 0 \pmod2 \text{ and } (\lambda_1 -1,\lambda_2,\lambda_3+1) \notin E_{3,\lambda_1 -1} \text{ and } \lambda \notin E_{3,\lambda_1}\}.\\
\end{align*}

To show that
$\varphi(\lambda)=GA(\lambda)$ for $\lambda=\lambda^M$, we consider the following four cases.

\begin{enumerate}
  \item[($F_3^o$)] If $\lambda \in F_3^o$, then the first choice of $GA(\lambda)$ is $(\lambda_1,\lambda_2,\lambda_3+1)=\varphi(\lambda)$.

  \item[($F_2^e$)] If $\lambda \in F_2^e$, then $GA(\lambda)=(\lambda_1,\lambda_2+1,\lambda_3)=\varphi(\lambda)$, because the first choice is occupied. That is,
we have $GA(\mu)=(\lambda_1,\lambda_2,\lambda_3+1)$, where $\mu<\lambda$ is given by
                $$ \mu=\left\{
                    \begin{array}{ll}
                              (\lambda_1-1,\lambda_2,\lambda_3+1), & \mbox{if  $\rho\in E_{3,\lambda_1}$;} \\
                              (\lambda_1,\lambda_2-1,\lambda_3+1), & \mbox{if  $\rho\not\in E_{3,\lambda_1}$,}
                   \end{array}
                    \right. \  \text{ where } \rho=(\lambda_1,\lambda_2-1,\lambda_3+1). $$

                   (I) If $\rho\in E_{3,\lambda_1}$, then $\rho_1-\rho_2=\lambda_1-\lambda_2+1$ is even,
                   and $2\rho_2-\rho_1-\rho_3=2\lambda_2-\lambda_1-\lambda_3-3 \in \None.$ To see that $\mu=(\lambda_1-1,\lambda_2,\lambda_3+1) \in E_{3,\mu_1}$ we check that: $\mu_1-\mu_2=\lambda_1-\lambda_2-1$ is even and $2\mu_2-\mu_1-\mu_3=2\lambda_2-\lambda_1-\lambda_3 \in \None$.

                   (II) If $\rho\not\in E_{3,\lambda_1}$, then we need to show that $\mu\in F_2^e$.
Firstly
$\mu_2+\mu_3=\lambda_2+\lambda_3$ is even by $\lambda\in F_2^e$; Secondly $\mu=\rho\notin E_{3,\lambda_1}$ is obvious;
Finally to show that $\omega=(\mu_1-1,\mu_2+1,\mu_3)\notin E_{3,\mu_1-1}$ we check that: i) If $\lambda_1-\lambda_2$ is even, then $\omega_1-\omega_2=\lambda_1-\lambda_2-1$ is odd so that $\omega\notin E_{3,\omega_1}$;
ii) If $\lambda_1-\lambda_2$ is odd, then $\mu_1-\mu_2=\lambda_1-\lambda_2+1$ is even, which implies, by $\mu\notin E_{3,\lambda_1}$ and the fact that $\lambda_1+\lambda_3=(\lambda_2+\lambda_3)+(\lambda_1-\lambda_2)$ is odd,
that  $2\mu_2-\mu_1-\mu_3=2\lambda_2-\lambda_1-\lambda_3-3\in \{1, -1,-2,-3,\dots\} \cap 2\mathbb{Z} = \{-2,-4,-6,\dots\} $.
Thus $2\omega_2-\omega_1-\omega_3=2\lambda_2-\lambda_1-\lambda_3\in \{1,-1,-3,\dots\}$. This shows that $\omega\not\in E_{3,\omega_1}$.

  \item[($F_2^o$)] If $\lambda \in F_2^o$, then $\mu=(\lambda_1-1,\lambda_2,\lambda_3+1) \in E_{3,\mu_1}$, so that $GA(\mu)=(\lambda_1,\lambda_2,\lambda_3+1).$ It follows that
 $GA(\lambda)=(\lambda_1,\lambda_2+1,\lambda_3)=\varphi(\lambda)$.

  \item[($F_1$)] If $\lambda \in F_1$, then $\lambda=(\lambda_1,\lambda_1-2k,\lambda_1-4k-\ell) \in E_{3,\lambda_1}$ for $\ell \in \None$. We have $GA(\lambda)=(\lambda_1+1,\lambda_2,\lambda_3)=\varphi(\lambda)$
                 (which is treated as not defined when $\lambda_1=n$), because the first and second choices are both occupied:
\begin{align*}
GA(\mu)&=(\lambda_1,\lambda_2+1,\lambda_3), \text{ where } \mu=(\lambda_1-1,\lambda_2+1,\lambda_3)\in F_1 \text{ and } \mu<\lambda;\\
GA(\nu)&=(\lambda_1,\lambda_2,\lambda_3+1), \text{ where } \nu=(\lambda_1,\lambda_2-1,\lambda_3+1)\in F_2^e\cup F_2^o \text{ and } \nu<\lambda.
\end{align*}

To show $\mu \in E_{3,\mu_1}$, we check that: by $\lambda\in E_{3,\lambda_1}$, we have $\mu_1-\mu_2=\lambda_1-\lambda_2-2$ is even  and $2\mu_2-\mu_1-\mu_3=2\lambda_2-\lambda_1-\lambda_3+3\in \None$.

To show $\nu\in F_2^e\cup F_2^o$, we consider the following two cases:

i) When $\lambda_2+\lambda_3 \equiv 0 \pmod2$, we need to show that $\nu \in F_2^e$. Firstly $\nu_2+\nu_3=\lambda_2+\lambda_3$ is even; Secondly $\lambda_1-\lambda_2$ is even by $\lambda\in E_{3,\lambda_1}$. Thus $\nu_1-\nu_2=\lambda_1-\lambda_2+1$ is odd, so that $\nu\notin E_{3,\lambda_1}$; Finally $\rho=(\nu_1-1,\nu_2+1,\nu_3)\notin E_{3,\lambda_1-1}$ follows from the fact that $\rho_1-\rho_2=\lambda_1-\lambda_2-1$ is odd.

ii) When $\lambda_2+\lambda_3 \not\equiv 0 \pmod2$, we need to show that $\nu \in F_2^o$.
Firstly $\nu_2+\nu_3=\lambda_2+\lambda_3$ is odd; Secondly by $\lambda\in E_{3,\lambda_1}$,
we get $\lambda_1-\lambda_2$ is even. Thus $\nu_1-\nu_2=\lambda_1-\lambda_2+1$ is odd, so that $\nu\notin E_{3,\lambda_1}$;
Finally for $\rho=(\nu_1-1,\nu_2,\nu_3+1)\in E_{3,\rho_1}$ we check that: a) $\rho_1-\rho_2= \lambda_1-\lambda_2$ is even; b)
since $2\lambda_2-\lambda_1-\lambda_3\in \None$ and $\lambda_1+\lambda_3=(\lambda_2+\lambda_3)+(\lambda_1-\lambda_2)$ is odd, $2\rho_2-\rho_1-\rho_3=2\lambda_2-\lambda_1-\lambda_3-3 \in \None$.
\end{enumerate}
\end{proof}

\section{The chain tableau of $L(3,n)$ with respect to $\varphi$ \label{sec:L3nVarphi}}
It is standard to represent a chain $C:\mu_1 \lessdot \mu_2 \cdots \lessdot \mu_k$ of partitions
by a standard Young tableau of a skew shape. We focus on partitions in $L(3,n)$.

\begin{enumerate}
 \item We first draw the $3\times n$ grid and color the Young diagram of $\mu_1$  by green/gray.

 \item For each $i\ge 1$, there is a unique square that is in the Young diagram of $\mu_{i+1}$ but not in the Young diagram of $\mu_{i}$. We fill
 the number $i$ in this square.
\end{enumerate}

%\begin{exam}
%For the sake of clarity, we illustrate in Figure \ref{fig:exam} the tableaux of
%$$C_{\varnothing}:(0,0,0)\lessdot (1,0,0)\lessdot (1,1,0)\lessdot (1,1,1)\lessdot (2,1,1)\lessdot (2,2,1)\lessdot (2,2,2)\lessdot (3,2,2)\lessdot (3,3,2)\lessdot (3,3,3)$$
%and
%$$C_4:(4,0,0)\lessdot (4,1,0)\lessdot (4,1,1)\lessdot (4,2,1)\lessdot (4,2,2)\lessdot (4,3,2)\lessdot (4,4,2).$$
%\begin{figure}[!ht]
%\centering{
%\includegraphics[height=1.2 in]{TableauDefineEX.pdf}}
%\caption{The tableaux of $C_\varnothing$ and $C_4$. \label{fig:exam}} \textcolor{red}{Need modified   ͼ  Ҫ  һ £ û  5}
%\end{figure}
%\end{exam}

Chain tableaux are helpful in discovering the patterns. Our map $\varphi$ produce chain decompositions of $L(3,n)$, each of the form
$C_\mu: \ \mu\lessdot \varphi(\mu)\lessdot \varphi^2(\mu)\lessdot \cdots\lessdot \varphi^s(\mu)$, with $\mu\in S_{3,n}$ and $\varphi^s(\mu)\in E_{3,n}$.
For sake of clarity, we list all the chain tableaux
of $L(3,8)$ in Figure \ref{fig:L38Tab}. For instance, $C_\phi$ corresponds to $\phi\lessdot 100 \lessdot 110\lessdot 111 \lessdot 211\lessdot\cdots \lessdot 888$,
where we have abbreviated the partition $(a,b,c)$ by $abc$ when clear from the context;
$C_{84}$ corresponds to the single partition $840$.

\begin{exam}
\begin{figure}[!ht]
\centering{
\includegraphics[height=3 in]{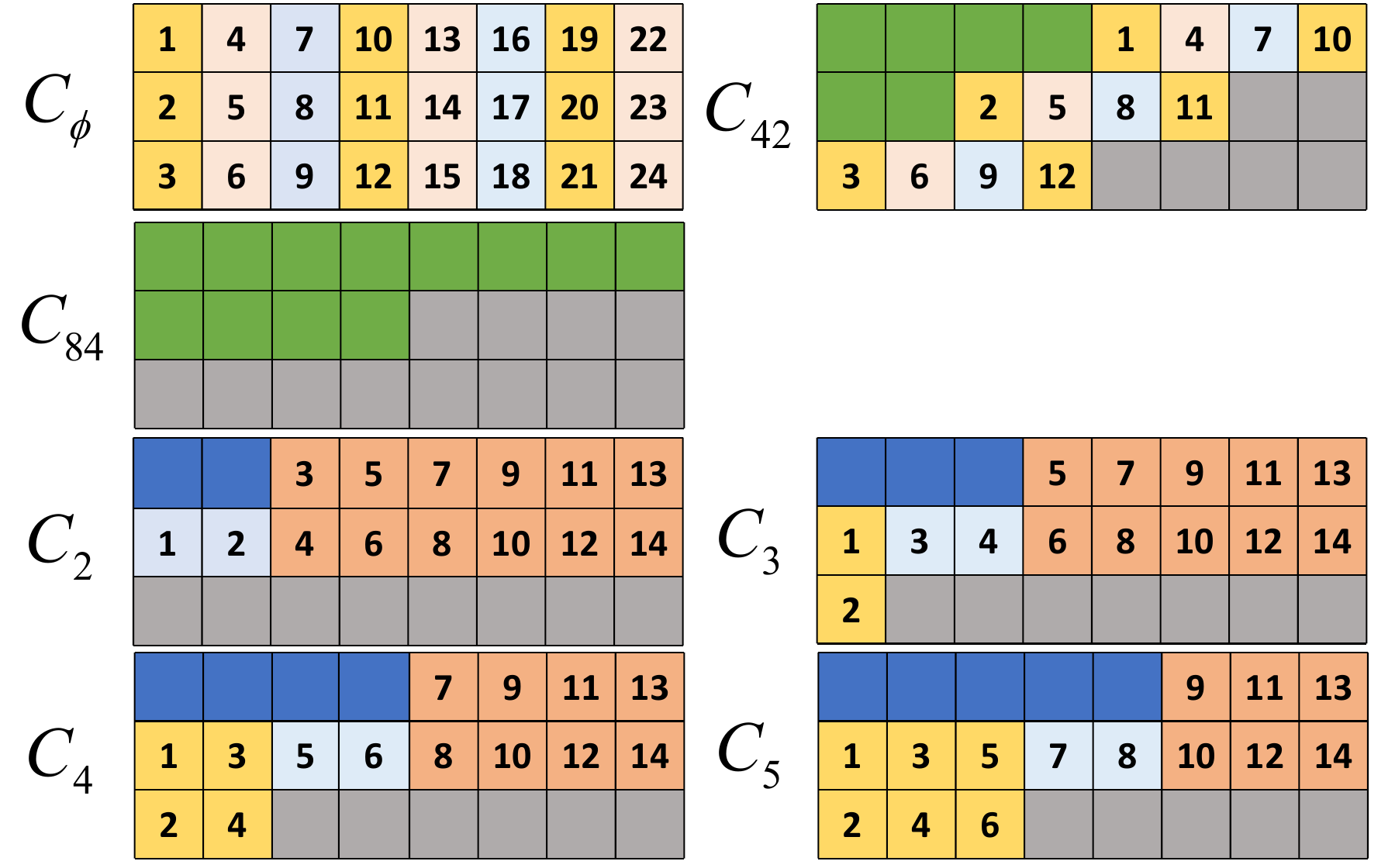}}
\end{figure}

\begin{figure}[!ht]
\centering{
\includegraphics[height=2.25 in]{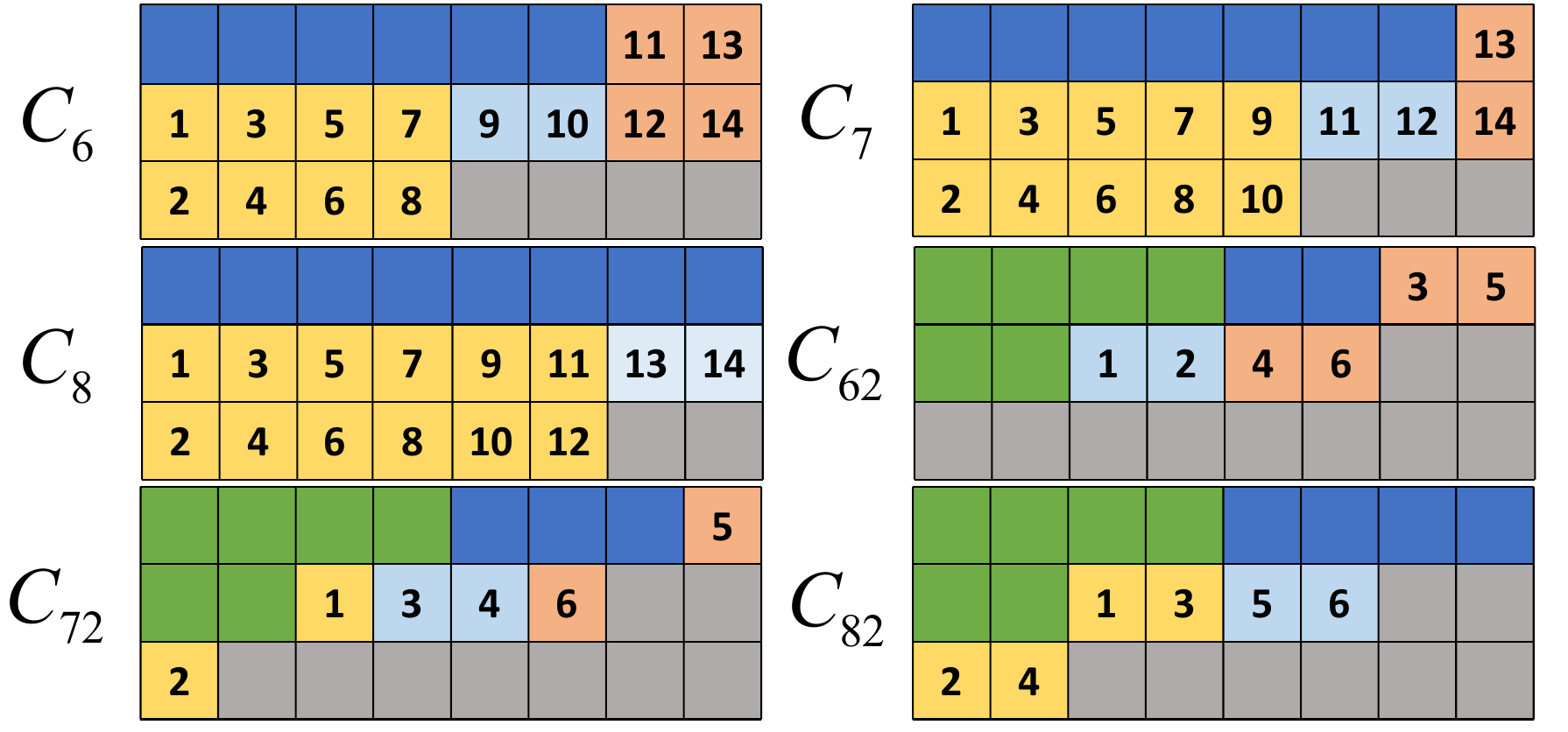}}
\caption{The tableaux of the chain decomposition of $L(3,8)$ under $\varphi$. \label{fig:L38Tab}}
\end{figure}
\end{exam}

By investigating these chain tableaux of $L(3,n)$ for small $n$, we find the pattern as follows.

\begin{theo}\label{theo:varphi-chain}
The bijection $\varphi$ in Theorem \ref{theo-varphi} produces a chain decomposition of $L(3,n)$.
The corresponding chain tableaux are divided into two types: i) $C_\mu$ with $\mu=(4k,2k,0)$ for $k=0,1,\dots, \lfloor n/4 \rfloor$, as in Figure
\ref{L3nType1};
ii) $C_\mu$ with $\mu=(4k+\ell,2k,0)$ where $\ell \ge 2$ and $k=0,1,\dots,\lfloor (n-\ell)/4 \rfloor$, as in Figure \ref{L3nType2}.
\begin{figure}[!ht]
\centering{
\includegraphics[height=1.6 in]{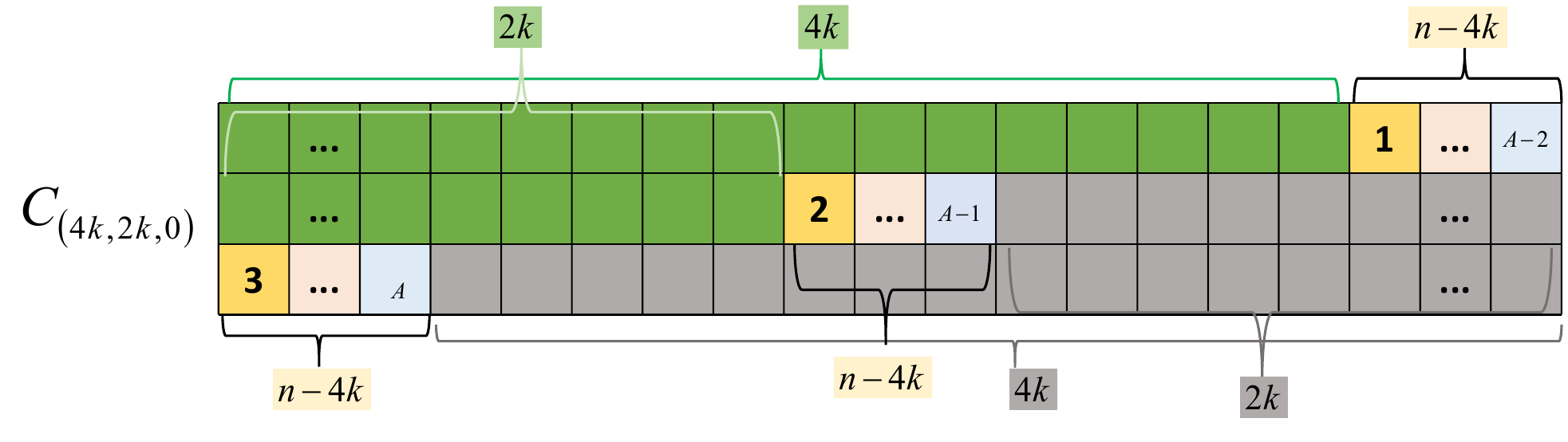}}
\caption{Chain tableaux of Type i) starting at $\mu=(4k,2k,0)$, where $A=3n-12k$. }\label{L3nType1}
\end{figure}

\begin{figure}[!ht]
\centering{
\includegraphics[height=1.6 in]{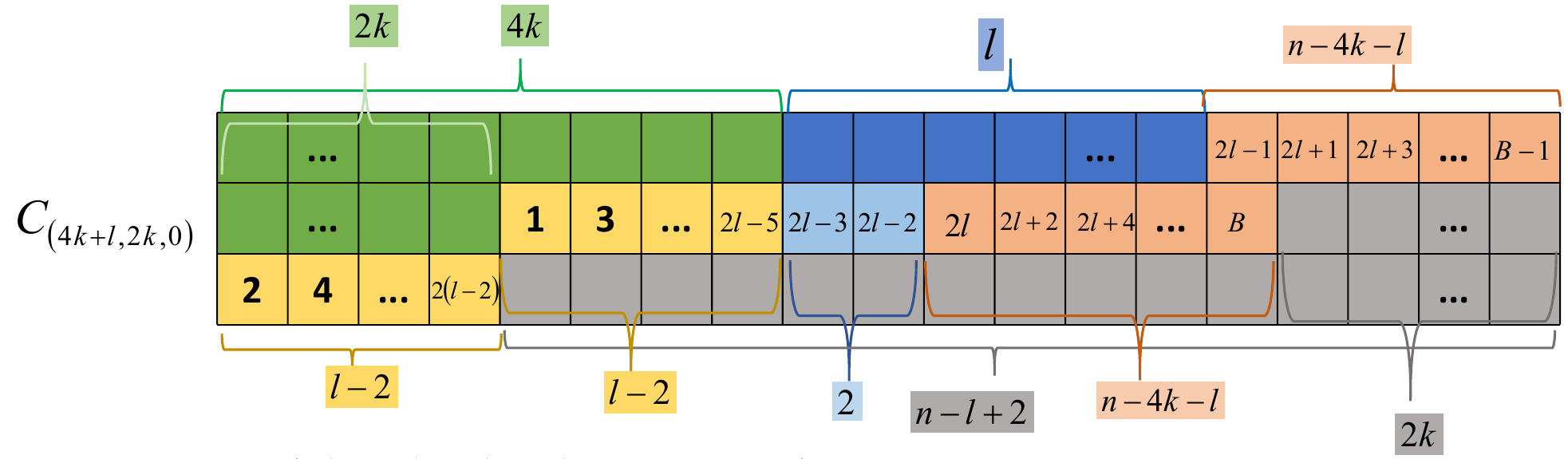}}
\caption{Chain tableaux of Type ii) starting at $\mu=(4k,2k,\ell)$ where $\ell\ge 2$ and $B=2n+4\ell-8k-2$.}\label{L3nType2}
\end{figure}
\end{theo}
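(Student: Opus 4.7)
The plan is to verify the claimed chain shapes by induction along each chain $C_\mu$ for $\mu \in S_{3,n}$. The chain decomposition itself is immediate from Theorem \ref{theo-varphi}: since $\varphi : L(3,n) \setminus E_{3,n} \to L(3,n) \setminus S_{3,n}$ is a bijection, iterating $\varphi$ from each $\mu \in S_{3,n}$ produces a chain terminating at the first element of $E_{3,n}$ reached, and these chains partition $L(3,n)$. The content of the theorem is therefore the explicit shape of each $C_\mu$, and the proof in either type will proceed by induction on the step number, invoking Lemma \ref{lem-E3n-check} and the four-case definition of $\varphi$ at each step.

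For Type i), $\mu = (4k, 2k, 0)$, I would show by induction that the chain visits, in order, the partitions
\[
  \mu^{(3i)} = (4k+i,\, 2k+i,\, i),\quad
  \mu^{(3i+1)} = (4k+i+1,\, 2k+i,\, i),\quad
  \mu^{(3i+2)} = (4k+i+1,\, 2k+i+1,\, i)
\]
for $i = 0, 1, \ldots, n - 4k$, so that $\varphi$ cyclically adds a box to row $1$, then row $2$, then row $3$. Each of the three cases in the cycle is verified directly: the partition $(a, a, a)$ lies in $E_{3, a}$ (forcing $\varphi$ to add to row $1$); $(a+1, a, a)$ has $\lambda_1 - \lambda_2$ odd while $(\lambda_1 - 1, \lambda_2 + 1, \lambda_3) = (a, a+1, a)$ fails to be a partition, forcing the box to row $2$; and $(a+1, a+1, a)$ satisfies $2\lambda_2 - \lambda_1 - \lambda_3 = 1 \notin \None$ (not in $E$) while $(a, a+1, a+1)$ is not a partition, forcing the box to row $3$. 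The chain terminates at $\mu^{(3(n-4k))} = (n,\, n-2k,\, n-4k) = \mu^* \in E_{3,n}$.

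For Type ii), $\mu = (4k+\ell, 2k, 0)$ with $\ell \geq 2$, I would split the chain into two phases. The initial phase has length $2\ell - 2$ and brings $\mu$ to $\nu = (4k+\ell,\, 2k+\ell,\, \ell - 2)$ without touching row $1$; the sequence alternates $+$row $2$ and $+$row $3$ moves (the exact alternation pattern depending on the parity of $\ell$) and closes with two consecutive $+$row $2$ moves. One checks at $\nu$ that $\nu_1 - \nu_2 = 2k$ is even and $2\nu_2 - \nu_1 - \nu_3 = 2 \in \None$, so $\nu \in E_{3, \nu_1}$; thereafter the steady-state phase of length $2(n - 4k - \ell)$ alternates $+$row $1$, $+$row $2$ and reaches $(n,\, n - 2k,\, \ell - 2) \in E_{3,n}$. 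The main technical obstacle is the bookkeeping in the initial phase of Type ii): one must track how $\lambda_2 + \lambda_3 \pmod 2$ and the quantity $2\lambda_2 - \lambda_1 - \lambda_3$ evolve as boxes are added alternately to rows $2$ and $3$, and separately handle the small-$\ell$ boundary behavior. Because both phases are periodic with short period, however, the induction reduces to a bounded number of verifications via Lemma \ref{lem-E3n-check}, and the covering of $L(3, n)$ by the chains follows automatically from the bijectivity of $\varphi$.
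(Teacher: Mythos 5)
Your plan follows the same route as the paper's first proof of this theorem: iterate $\varphi$ from each $\mu\in S_{3,n}$ (bijectivity of $\varphi$, plus the fact that $\varphi(\lambda)$ covers $\lambda$, makes the resulting chains partition $L(3,n)$), then verify the periodic pattern of moves step by step through Lemma \ref{lem-E3n-check}; the paper packages exactly these verifications as Lemma \ref{lemProTab}. However, the one block of verifications you actually carry out, Type i), is incorrect for every $k\ge 1$. Your three checks are stated for partitions $(a,a,a)$, $(a+1,a,a)$, $(a+1,a+1,a)$, i.e., for the single chain with $k=0$. For $k\ge 1$ the chain elements are $(4k+i,2k+i,i)$, $(4k+i+1,2k+i,i)$, $(4k+i+1,2k+i+1,i)$, and the justifications ``$(\lambda_1-1,\lambda_2+1,\lambda_3)$ fails to be a partition'' and ``$(\lambda_1-1,\lambda_2,\lambda_3+1)$ is not a partition'' are false there: $(4k+i,2k+i+1,i)$ and $(4k+i,2k+i+1,i+1)$ are genuine partitions precisely because $2k\ge 1$. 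The correct reason they avoid $E_{3,\lambda_1-1}$ is a parity computation---first part minus second part equals $2k-1$, which is odd---and this is exactly what parts $(A_2)$ and $(A_3)$ of Lemma \ref{lemProTab} record. As written, your Type i) argument covers one chain out of $\lfloor n/4\rfloor+1$.

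For Type ii) you state the correct two-phase shape, and your check that $\nu=(4k+\ell,2k+\ell,\ell-2)$ lies in $E_{3,\nu_1}$ is fine, but the initial phase---which you yourself call the main technical obstacle---is deferred, not proved. That is where the real content lies: one must check (the paper's $(B_2)$, $(B_3)$) that the moves alternate $+$row~2, $+$row~3 up to $(4k+\ell,2k+\ell-2,\ell-2)$, and, the delicate point, that at $\rho=(4k+\ell,2k+\ell-1,\ell-2)$ the move is again $+$row~2 even though $\rho_2+\rho_3$ is odd. This move is forced by the \emph{third} branch of the definition of $\varphi$: one needs $(\rho_1-1,\rho_2,\rho_3+1)=(4k+\ell-1,2k+\ell-1,\ell-1)\in E_{3,\rho_1-1}$, which holds because its first minus second part is $2k$ (even) and $2(2k+\ell-1)-(4k+\ell-1)-(\ell-1)=0\in\None$. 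Merely asserting that the phase ``closes with two consecutive $+$row~2 moves'' does not establish this, and without it the induction never connects the initial phase to the steady state. In short, the proposal is the right plan and matches the paper's strategy, but it is not yet a proof: the Type i) case needs the parity argument in place of the ``not a partition'' claim, and the Type ii) bookkeeping must be carried out rather than announced.
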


We give two proofs of the theorem. The second proof will be given in the next section.
The first proof need the following Lemma.
%We also need a notation. If $\lambda=(a,b,c)$ then we denote by $\lambda^{\bar *}=(a-c,a-b,0)$,
%which is just $\lambda^*$ when $\lambda$ is treated as a partition in $L(3,a)$. Before giving the lemma, let  s talk about some properties of the starting point set $S_{3,n}$ in $L(3,n)$.

\begin{lem}\label{lemProTab}
We have the following facts:
\begin{enumerate}
  \item[$(A_1)$] If $\lambda=(4k+c,2k+c,c),k\geqslant0,c\geqslant0$, then $\lambda \in E_{3,\lambda_1}$.
  \item[$(A_2)$] If $\lambda=(4k+c+1,2k+c,c),k\geqslant1,c\geqslant0$, then $\lambda \notin E_{3,\lambda_1}$,$\lambda_2+\lambda_3\equiv0\pmod 2$ and $(\lambda_1-1,\lambda_2+1,\lambda_3)\notin E_{3,\lambda_1-1}$.
  \item[$(A_3)$] If $\lambda=(4k+c+1,2k+c+1,c),k\geqslant1 ,c\geqslant0$, then $\lambda \notin E_{3,\lambda_1}$, $\lambda_2+\lambda_3\not\equiv0\pmod2$ and $(\lambda_1-1,\lambda_2,\lambda_3+1)\notin E_{3,\lambda_1-1}$.
  \item[$(B_2)$] If $\lambda=(4k+\ell,2k+c,c),k\geqslant0 ,1\leqslant c \leqslant \ell-2,\ell\geqslant2$, then $\lambda \notin E_{3,\lambda_1}$,$\lambda_2+\lambda_3\equiv0\pmod 2$ and $(\lambda_1-1,\lambda_2+1,\lambda_3)\notin E_{3,\lambda_1-1}$.
  \item[$(B_3)$] If $\lambda=(4k+\ell,2k+c+1,c),k\geqslant0 ,1\leqslant c \leqslant \ell-3,\ell\geqslant2$, then $\lambda \notin E_{3,\lambda_1}$, $\lambda_2+\lambda_3\not\equiv0\pmod2$ and $(\lambda_1-1,\lambda_2,\lambda_3+1)\notin E_{3,\lambda_1-1}$.
  \item[$(C_1)$] If $\lambda=(4k+\ell+c,2k+\ell+c,\ell-2),c\geqslant 0,k\geqslant0,\ell\geqslant2$, then $\lambda \in E_{3,\lambda_1}$.
  \item[$(C_2)$] If $\lambda=(4k+\ell+c+1,2k+\ell+c,\ell-2),k\geqslant0 ,c\geqslant0,\ell\geqslant2$, then $\lambda \notin E_{3,\lambda_1}$ and  $\lambda$ satisfy one of the following two conditions:\\
  (1) $\lambda_2+\lambda_3\equiv0\pmod 2$ and $(\lambda_1-1,\lambda_2+1,\lambda_3)\notin E_{3,\lambda_1-1}$;\\
  (2) $\lambda_2+\lambda_3\not\equiv0\pmod 2$ and $(\lambda_1-1,\lambda_2,\lambda_3+1)\in E_{3,\lambda_1-1}$.
\end{enumerate}
\end{lem}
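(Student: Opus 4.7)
The plan is to verify each of the eight parts by direct substitution into the characterization supplied by Lemma~\ref{lem-E3n-check}: $\lambda=(\lambda_1,\lambda_2,\lambda_3)$ lies in $E_{3,\lambda_1}$ iff both (i) $\lambda_1-\lambda_2$ is even and (ii) $2\lambda_2-\lambda_1-\lambda_3\in\None$. Every assertion then reduces to computing these two quantities (and sometimes the parity of $\lambda_2+\lambda_3$) for $\lambda$ itself and, in the negative cases, for an auxiliary partition of the form $(\lambda_1-1,\lambda_2+1,\lambda_3)$ or $(\lambda_1-1,\lambda_2,\lambda_3+1)$ dictated by the definition of $\varphi$.

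For the positive cases $(A_1)$ and $(C_1)$, the check is immediate. In $(A_1)$ one gets $\lambda_1-\lambda_2=2k$ (even) and $2\lambda_2-\lambda_1-\lambda_3=0\in\None$; in $(C_1)$ one gets $\lambda_1-\lambda_2=2k$ and $2\lambda_2-\lambda_1-\lambda_3=c+2\geqslant 2$. The negative cases $(A_2),(A_3),(B_2),(B_3)$ all follow the same template: first expose a failure of (i) or (ii) for $\lambda$, then verify the parity of $\lambda_2+\lambda_3$, and finally expose another failure for the auxiliary partition. For instance, in $(A_3)$ one has $\lambda_1-\lambda_2=2k$ but $2\lambda_2-\lambda_1-\lambda_3=1\notin\None$, so (ii) fails; and the auxiliary $(\lambda_1-1,\lambda_2,\lambda_3+1)$ has first–second difference $2k-1$ (odd, since $k\geqslant 1$), so (i) fails. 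In $(B_2)$ one gets $2\lambda_2-\lambda_1-\lambda_3=c-\ell\leqslant -2$ using $c\leqslant\ell-2$, and the analogous computation on $(\lambda_1-1,\lambda_2+1,\lambda_3)$ gives $c-\ell+3\leqslant 1$, again outside $\None$. The case $(B_3)$ is entirely parallel, with $c-\ell+2\leqslant -1$ killing (ii) on both partitions.

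Only $(C_2)$ genuinely requires a branching. The parity of $\lambda_2+\lambda_3=2k+2\ell+c-2$ depends on $c$. When $c$ is even, alternative~(1) applies and the auxiliary partition $(\lambda_1-1,\lambda_2+1,\lambda_3)$ has first–second difference $2k-1$ (odd), so (i) fails there; when $c$ is odd, alternative~(2) applies and the auxiliary partition $(\lambda_1-1,\lambda_2,\lambda_3+1)$ satisfies $\lambda_1-\lambda_2=2k$ together with $2\lambda_2-\lambda_1-\lambda_3=c+1\geqslant 2\in\None$, so both (i) and (ii) hold.

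There is no real obstacle; the whole lemma is a bookkeeping exercise over eight closed-form families. The only care needed is with boundary values of the parameters — for instance $\ell=2$ in $(B_2)$ makes the range $1\leqslant c\leqslant\ell-2$ empty, so the claim is vacuously true — and with tracking which of conditions (i) and (ii) fails in each negative case, since the intended application of this lemma in the proof of Theorem~\ref{theo:varphi-chain} requires knowing precisely which branch of the definition of $\varphi$ is activated at each step of the chain tableau.
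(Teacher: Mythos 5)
Your overall strategy is exactly the paper's: verify every family by direct substitution into the two conditions of Lemma~\ref{lem-E3n-check}. Seven of the eight cases go through, but your treatment of $(B_2)$ has a genuine logical gap. For the auxiliary partition $\rho=(\lambda_1-1,\lambda_2+1,\lambda_3)$ you correctly compute $2\rho_2-\rho_1-\rho_3=c-\ell+3\leqslant 1$, but the inference ``hence outside $\None$'' is false under this paper's conventions: here $0\in\N$, so $\None=\{0,2,3,4,\dots\}$ contains $0$ (the proof of $(A_1)$ itself relies on $0\in\None$, as does the step $2\tau_2-\tau_1-\tau_3=0\in\None$ in the first proof of Theorem~\ref{theo:varphi-chain}). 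The value $c-\ell+3$ equals $0$ precisely when $c=\ell-3$, which is permitted by the hypotheses (e.g.\ $\ell=4$, $c=1$, any $k$: then $\lambda=(4k+4,2k+1,1)$, $\rho=(4k+3,2k+2,1)$, and $2\rho_2-\rho_1-\rho_3=0\in\None$), and in that case condition (ii) of Lemma~\ref{lem-E3n-check} actually \emph{holds} for $\rho$, so your argument establishes nothing there. The conclusion $\rho\notin E_{3,\lambda_1-1}$ is still true, but one must switch to condition (i): when $c=\ell-3$ one has $\rho_1-\rho_2=2k+\ell-c-2=2k+1$, which is odd. So $(B_2)$ requires a two-case argument (if $c-\ell+3\neq 0$ then (ii) fails for $\rho$; if $c-\ell+3=0$ then (i) fails), not the one-line dismissal you gave.

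As a point of comparison, the paper's own proof of $(B_2)$ stumbles at the same spot, but in the opposite direction: it asserts $2\rho_2-\rho_1-\rho_3=-(\ell-c)\leqslant -2$, which is an arithmetic slip (the correct value is $-(\ell-c)+3$); its logic would be valid if the computation were, whereas your computation is valid but the logic is not, and both proofs need the same patch. Elsewhere you are sound, and in $(B_3)$, $(C_1)$ and $(C_2)$ your arithmetic ($c-\ell+2$ for both partitions in $(B_3)$, $c+2$ in $(C_1)$, and $\rho_1-\rho_2=2k$ in case (2) of $(C_2)$) is in fact more accurate than the paper's, which contains the harmless slips $-(\ell-c)+1$, $2c+2$ and $0$ at the corresponding places.
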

\begin{proof}
\begin{enumerate}
  \item[$(A_1)$]  Direct calculation gives $\lambda_1-\lambda_2=2k$ and $2\lambda_2-\lambda_1-\lambda_3=0\in \None$.

  \item[$(A_2)$]  By direct calculation, $\lambda_1-\lambda_2=2k+1$ is odd, which implies $\lambda \notin E_{3,\lambda_1}$; $\lambda_2+\lambda_3=2k+2c$ is even;
  Let $\rho=(\lambda_1-1,\lambda_2+1,\lambda_3)$. Then $\rho_1-\rho_2=2k-1$ is odd, which implies that  $\rho \notin E_{3,\lambda_1-1}$.

  \item[$(A_3)$]  By direct calculation, $2\lambda_2-\lambda_1-\lambda_3=1 \notin \None$, which implies $\lambda \notin E_{3,\lambda_1}$; $\lambda_2+\lambda_3=2k+2c+1$ is odd;
   Let $\rho=(\lambda_1-1,\lambda_2,\lambda_3+1)$. Then $\rho_1-\rho_2=2k-1$ implies $\rho \notin E_{3,\lambda_1-1}$.

  \item[$(B_2)$] By $2\lambda_2-\lambda_1-\lambda_3=-(\ell-c)(\le -2) \notin \None$ we have $\lambda \notin E_{3,\lambda_1}$; $\lambda_2+\lambda_3=2k+2c$ is even;
  Let $\rho=(\lambda_1-1,\lambda_2+1,\lambda_3)$. Then $2\rho_2-\rho_1-\rho_3=-(\ell-c)\le-2$ and is hence not in $\None$, so that $\rho \notin E_{3,\lambda_1-1}$.

  \item[$(B_3)$] By $2\lambda_2-\lambda_1-\lambda_3=-(\ell-c)+2(\le -1) \notin \None$ we have $\lambda \notin E_{3,\lambda_1}$; $\lambda_2+\lambda_3=2k+2c+1$ is odd;
  Let $\rho=(\lambda_1-1,\lambda_2,\lambda_3+1)$. Then $2\rho_2-\rho_1-\rho_3=-(\ell-c)+1(\le-2) \notin \None$, so that $\rho \notin E_{3,\lambda_1-1}$.

  \item[$(C_1)$] Since $\lambda_1-\lambda_2=2k$ is even and $2\lambda_2-\lambda_1-\lambda_3=2c+2 \in \None$, we have $\lambda \in E_{3,\lambda_1}$.

  \item[$(C_2)$] By $\lambda_1-\lambda_2=2k+1$ we have $\lambda \notin E_{3,\lambda_1}$.

                      (1) If $\lambda_2+\lambda_3=2k+2\ell+c-2$ is even. Let $\rho=(\lambda_1-1,\lambda_2+1,\lambda_3)=(4k+\ell+c,2k+\ell+c+1,\ell-2)$.
                      Then $\rho_1-\rho_2=2k-1$ is odd, so that $\rho \notin E_{3,\lambda_1-1}$.

                      (2) If $\lambda_2+\lambda_3=2k+2\ell+c-2$ is odd. Let $\rho=(\lambda_1-1,\lambda_2,\lambda_3+1)=(4k+\ell+c,2k+\ell+c,\ell-1)$.
                      Then $\rho_1-\rho_2=0$ is even. Since $\lambda_2+\lambda_3=2k+2\ell+c-2$ is odd, we have $c\neq 0$ and therefore $2\rho_2-\rho_1-\rho_3=c+1 \in \None$.
  \end{enumerate}

\end{proof}

\begin{proof}[First proof of Theorem \ref{theo:varphi-chain}]
We prove by applying Lemma
\ref{lemProTab} and the definition of $\varphi$. There are two cases as follows.
\begin{enumerate}
                 \item[Case (i).] See Figure \ref{L3nType1}.
                                 For each $k,c\in \N$, let $\mu_c=(\mu_1,\mu_2,\mu_3)=(4k+c,2k+c,c)$.
                                 By Lemma \ref{lemProTab} part $(A_1)$, $(A_2)$ and $(A_3)$, we get $\varphi(\mu_c)=(\mu_1+1,\mu_2,\mu_3)=(4k+c+1,2k+c,c)$, $\varphi^2(\mu_c)=(\mu_1+1,\mu_2+1,\mu_3)=(4k+c+1,2k+c+1,c)$ and $\varphi^3(\mu_c)=(\mu_1+1,\mu_2+1,\mu_3+1)=(4k+c+1,2k+c+1,c+1)$ respectively.

               \item[Case (ii).] See Figure \ref{L3nType2}. First consider labels up to $2\ell -5$.
                                 For each $k\in \N$ and $\ell\geqslant2$ satisfying $1\leqslant c \leqslant \ell-3$, let $\nu_{\ell,c}=(4k+\ell,2k+c,c)$.
By part $(B_2)$ and $(B_3)$, we have
$$\nu_{\ell,c}\mathop{\longrightarrow}\limits^\varphi (4k+\ell,2k+c+1,c)\mathop{\longrightarrow}\limits^\varphi(4k+\ell,2k+c+1,c+1)=\nu_{\ell,c+1}.$$
This process end at $\omega=\nu_{\ell,\ell-2}=(4k+\ell,2k+\ell-2,\ell-2)$ with label $2\ell - 4$.

Next consider $\omega$ with label $2\ell-4$ and
$\rho=(4k+\ell,2k+\ell-1,\ell-2)$ with label $2\ell -3$. By part $(B_2)$, we get $\varphi(\omega)=\rho$, as desired.
We need to show that $\rho\in F_2^o$ so that $\varphi(\rho)=(4k+\ell,2k+\ell,\ell-2)$ DO corresponds to label $2\ell -2$. Firstly $\rho_2+\rho_3=2k+2\ell-3$ is odd; Secondly let $\tau=(\rho_1-1,\rho_2,\rho_3+1)$. Then
$\tau_1-\tau_2=2k$ is even and $2\tau_2-\tau_1-\tau_3=0 \in \None$, so that $\tau\in E_{3,\rho_1-1}$; Finally $\rho_1-\rho_2=2k+1$ is odd, which implies that $\rho\notin E_{3,\rho_1}$.

For labels begin at $2\ell -2$, let $\mu_{\ell,c}=(4k+\ell+c,2k+\ell+c,\ell-2)$, where $k\in \N$ and $\ell\geqslant2$ satisfy $0\leqslant c \leqslant n-4k-\ell-1$. By part $(C_1)$ and $(C_2)$,
we have
\begin{multline*}
 \qquad  \mu_{\ell,c}\mathop{\longrightarrow}\limits^\varphi (4k+\ell+c+1,2k+\ell +c,\ell-2)\\
\mathop{\longrightarrow}\limits^\varphi(4k+\ell+c+1,2k+\ell +c+1,\ell-2)=\mu_{\ell,c+1}. \qquad
\end{multline*}
This process ends at $\mu_{\ell, n-4k-\ell}=(n,n-2k,\ell-2)$.
\end{enumerate}
\end{proof}

From the proof, we see that $\varphi$ induces two type of chains: i) Chains from $(4k,2k,0)$ to $(n,n-2k,n-4k)=(4k,2k,0)^*$;
ii) For $\ell \ge 2$, we have chains from $(4k+\ell,2k,0)$ to $(n,n-2k,\ell-2)=(n-\ell+2, 2k,0)^*$. This give rise an involution $\psi$ on $S_{3,n}$
defined by $\psi((4k,2k,0))=(4k,2k,0)$ and $\psi((4k+\ell,2k,0))=(n-\ell+2, 2k,0)$     for $\ell\ge 2$. The fixed points of $\psi$ are
$\{(4k,2k,0): 4k\le n\} \cup \{(4k+\ell,2k,0): \ell\ge 2, n=4k+2\ell-2\}$.

\subsection{Comparison with other chain decompositions}
%       Ǹ          е L(3,6) ĶԳ    ֽ ,       е L(4,4) ĶԳ        ǵ L(3,6),L(4,4)   ֽ   ͼA и
%               Ƿ  ֣    ǵ    ֽ Ĺ    Ը   Ѱ

In Figure \ref{L36B} we draw the tableaux of the symmetric chain decompositions of $L(3,6)$ from a result of Bernt Lindstr$\ddot{o}$m \cite{B.Lindstrom}. Compare it with our chain decompositions in Figure \ref{L36our}. We also draw the tableaux of the symmetric chain  decompositions of $L(4,4)$ from a result of Douglas B. West \cite{Nathan} in Figure \ref{L44D}. Compare it with our chain  decompositions in Figure \ref{L44our}. In both examples, the pattern of our chain decompositions seems easier to find.

\begin{figure}[!ht]
\centering{
\includegraphics[height=2.4 in]{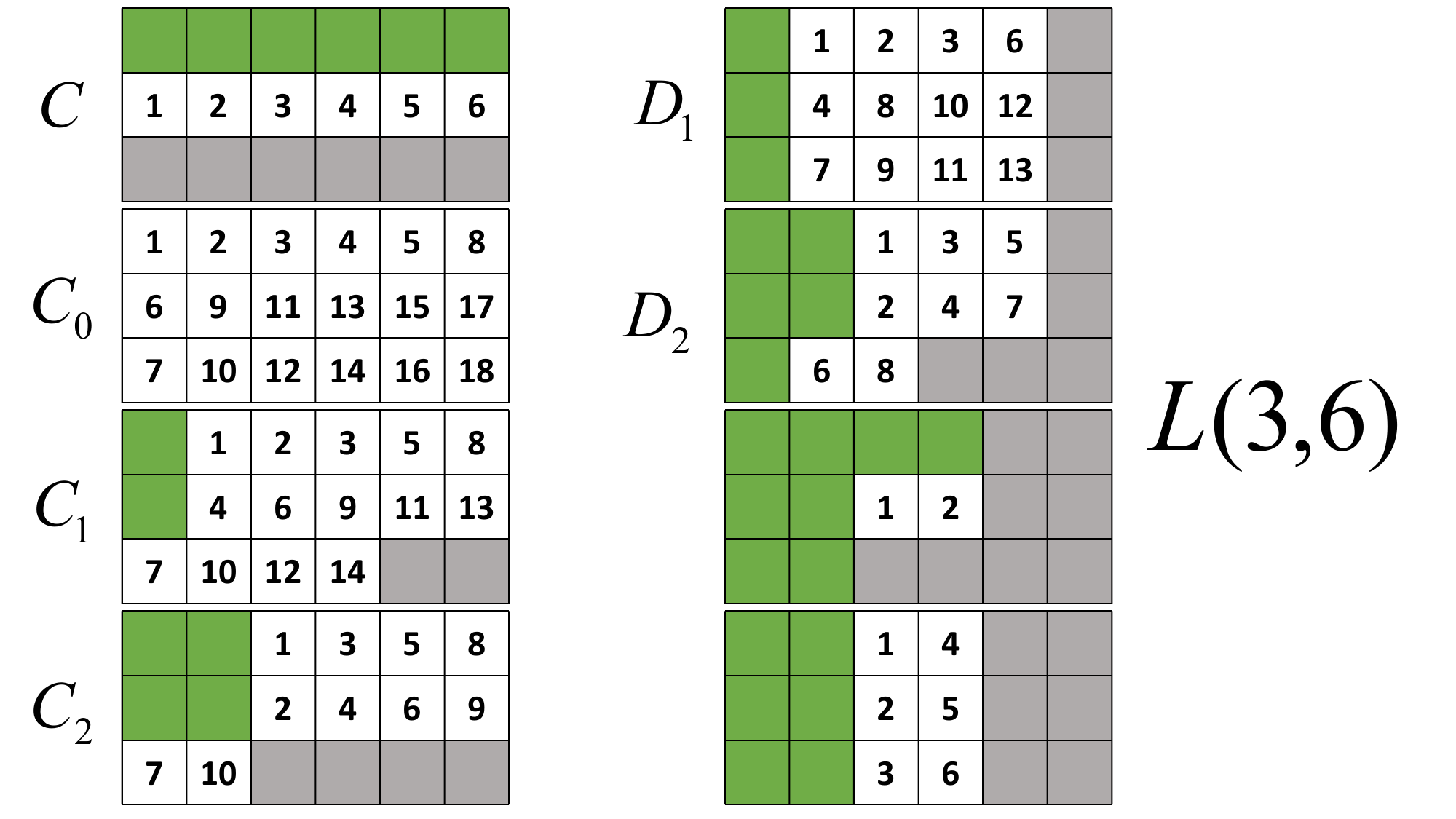}}
\caption{The tableaux for Lindstr$\ddot{o}$m's chain decompositions of $L(3,6)$. }\label{L36B}
\end{figure}

\begin{figure}[!ht]
\centering{
\includegraphics[height=2.4 in]{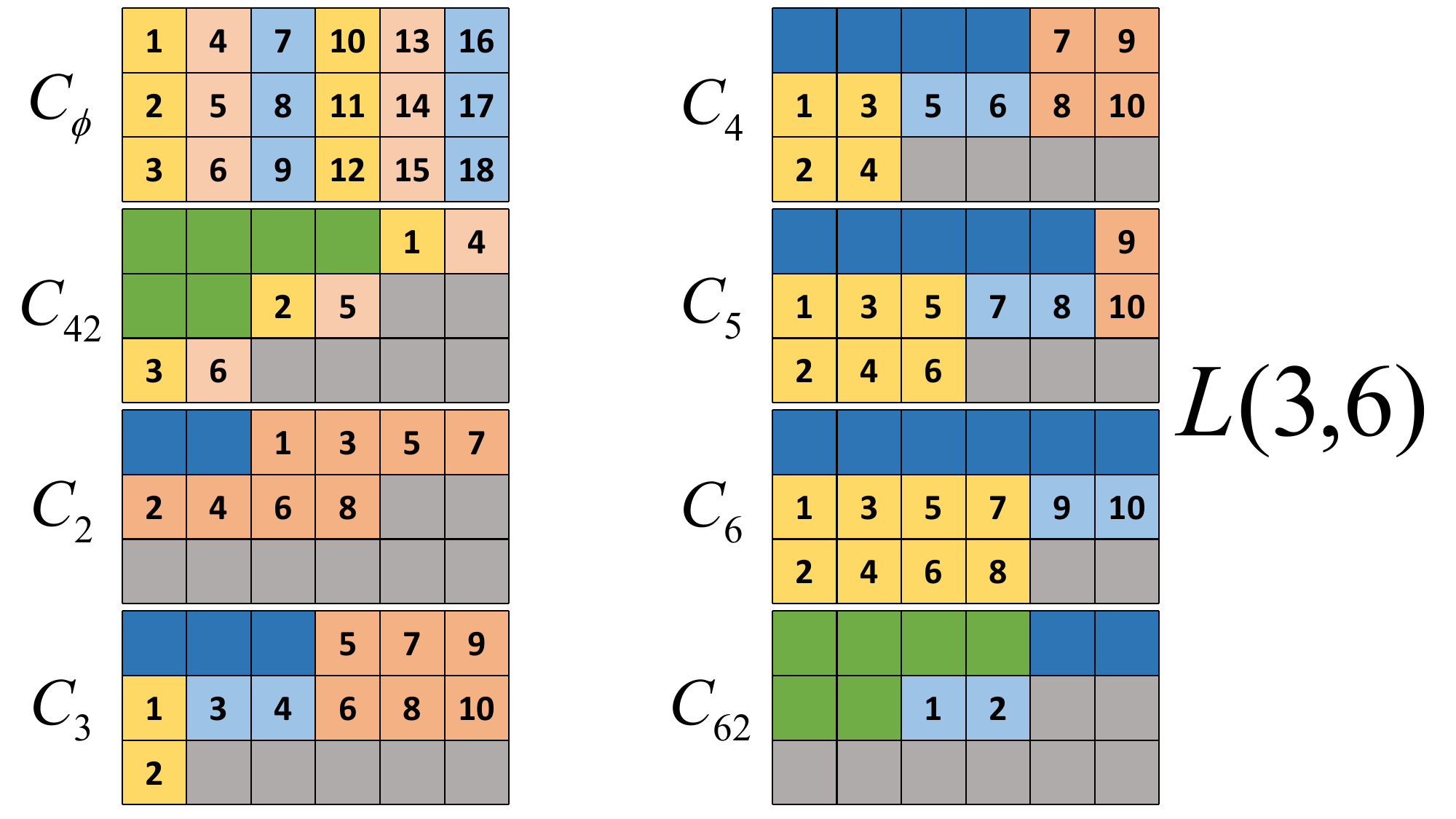}}
\caption{The tableaux for our chain decompositions of $L(3,6)$. }\label{L36our}
\end{figure}

\begin{figure}[!ht]
\centering{
\includegraphics[height=2.4 in]{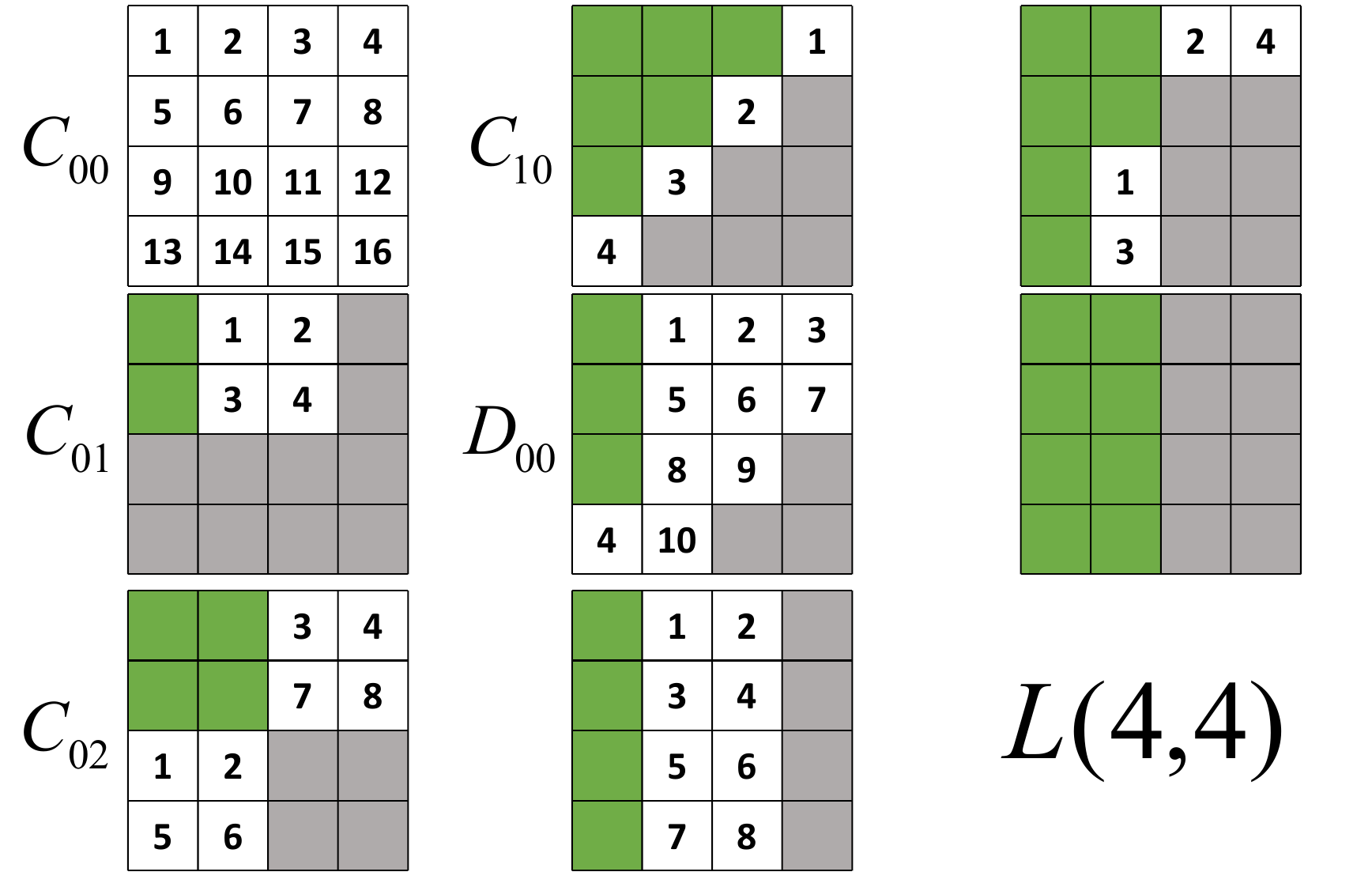}}
\caption{The tableaux for West's chain decompositions of $L(4,4)$.}\label{L44D}
\end{figure}

\begin{figure}[!ht]
\centering{
\includegraphics[height=2.4 in]{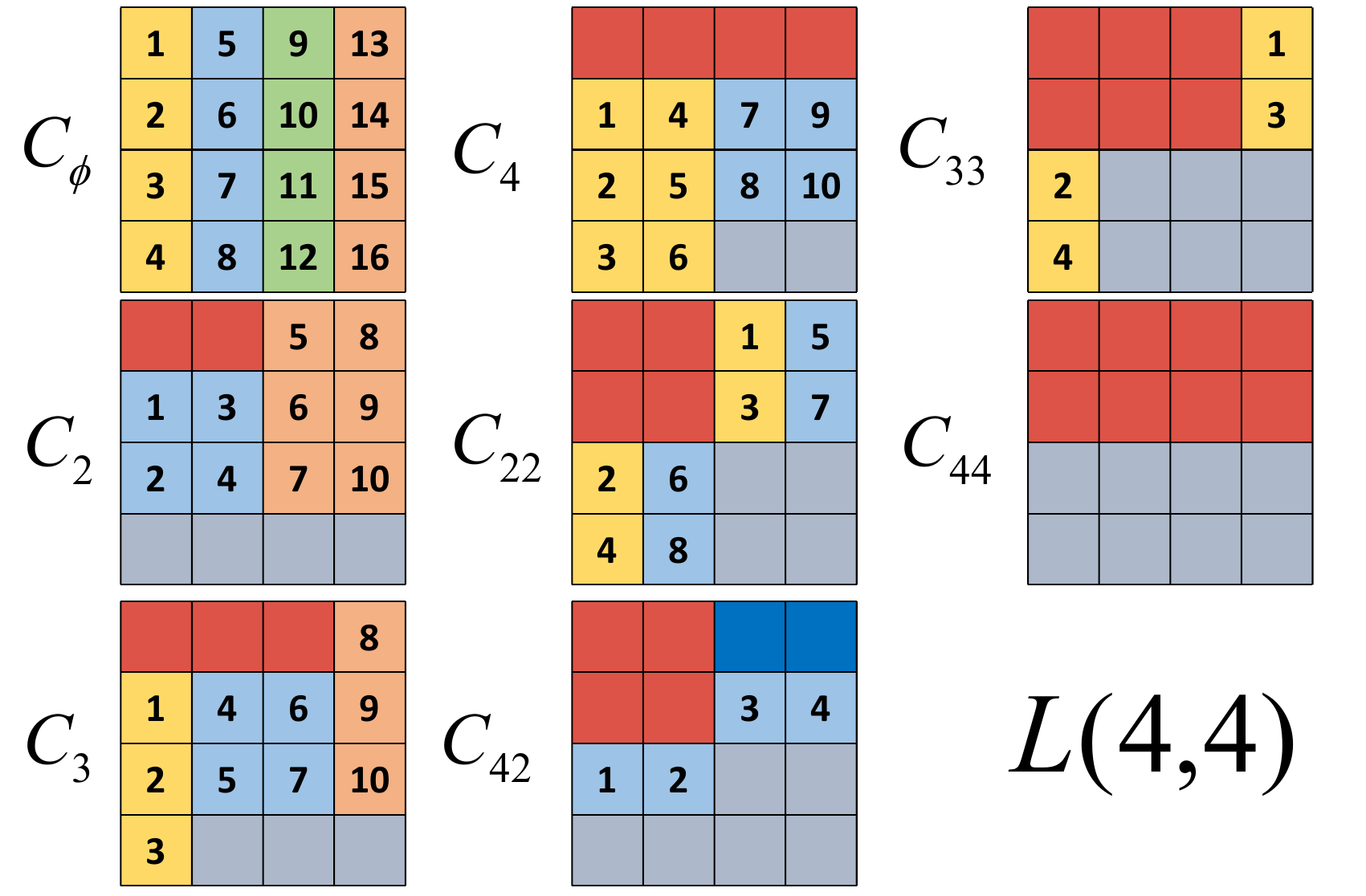}}
\caption{The tableaux for our chain decompositions of $L(4,4)$.}\label{L44our}
\end{figure}

\section{Sperner chain decompositions of $L(3,n)$ and $L(4,n)$ \label{sec:L3nL4n}}
A chain decomposition $C_1,C_2,\dots, C_N$ of $L(m,n)$ is called Sperner if it satisfies the following two conditions:
i) $L(m,n)$ is the disjoint union of the $C_j$'s;
ii) each $C_j$ is of the form $x_{j,1} < x_{j,2} < x_{j,3} <\cdots < x_{j,e_j}$
with $\rank(x_{j,1})\le mn/2$ and $\rank(x_{j,e_j})\ge mn/2$, where $x_{j,1}$ are called the starting partitions and $x_{j,e_j}$ are called the end partitions.
Obviously each $C_j$ intersects $L_{\lfloor mn/2 \rfloor}(m,n)$, which implies the Sperner property.
Symmetric chain decompositions are Sperner chain decompositions satisfying the extra rank symmetric condition $\rank(x_{j,1})+\rank(x_{j,e_j})=mn$.

Theorem \ref{theo:varphi-chain} indeed give a Sperner chain decomposition of $L(3,n)$. Its first proof relies on the order matching $\varphi$.
We give a self-contained proof and extend the result for $L(4,n)$.

\subsection{A direct proof for the chain decomposition of $L(3,n)$}
We need the following classification of $L(3,n)$ in 7 types.
\begin{lem}\label{lemTabEle}
Any element $\lambda=(\lambda_1,\lambda_2,\lambda_3)$ in $L(3,n)$ can be uniquely expressed in one of the following forms:
\begin{enumerate}
  \item[$A_1)$] $(4k+c,2k+c,c),\ k\geqslant0,\ c\geqslant0$.
  \item[$A_2)$] $(4k+c+1,2k+c,c),\ k\geqslant0,\ c\geqslant0$.
  \item[$A_3)$] $(4k+c+1,2k+c+1,c), \ k\geqslant0,\ c\geqslant0$.
  \item[$B_2)$] $(4k+\ell,2k+c,c),\ k\geqslant0,\ \ell\geqslant2,\ \ell-2\geqslant c \geqslant0$.
  \item[$B_3)$] $(4k+\ell,2k+c+1,c),\ k\geqslant0,\ \ell\geqslant2,\ \ell-2\geqslant c \geqslant0$.
  \item[$C_1)$] $(4k+\ell+c,2k+\ell+c,\ell-2),\ k\geqslant0,\ \ell\geqslant2,\ c\geqslant0$.
  \item[$C_2)$] $(4k+\ell+c+1,2k+\ell+c,\ell-2),\ k\geqslant0,\ \ell\geqslant2,\ c\geqslant0$.
\end{enumerate}
\end{lem}
\begin{proof}
We first prove the uniqueness.
Since the elements in each type are clearly different from each other, it suffices to prove
that there are no identical elements between different types. This is achieved by computing the values $\alpha=\lambda_1-\lambda_2$,
$\beta=\lambda_2-\lambda_3$, and $\alpha-\beta$ for each $\lambda$, as given in the following table.
$$\begin{array}{|l|l|l|l|l|}\hline
   &     (\lambda_1,\lambda_2,\lambda_3)      & \alpha-\beta   & \alpha=\lambda_1-\lambda_2   & \beta=\lambda_2-\lambda_3 \\\hline
A_1)&     (4k+c,2k+c,c)                        & 0                                &  even                 &     even                \\\hline
A_2)&     (4k+c+1,2k+c,c)                      & 1                                &  odd                  &     even               \\\hline
A_3)&     (4k+c+1,2k+c+1,c)                    & -1                               &  even                 &     odd                \\\hline
B_2)&     (4k+\ell,2k+c,c)                     & \ell -c \geq 2                   &                       &     even               \\\hline
B_3)&     (4k+\ell,2k+c+1,c)                   & \ell -c-2 \geq 0                 &                       &     odd                \\\hline
C_1)&     (4k+\ell+c,2k+\ell+c,\ell-2)         & -2-c \leq -2                     &  even                 &                       \\\hline
C_2)&     (4k+\ell+c+1,2k+\ell+c,\ell-2)       & -1-c \leq -1                     &  odd                  &\\\hline
\end{array}
$$
For instance, type $A_1$ and $B_3$ partitions can only overlap at $\lambda$ with $\alpha-\beta=0$, but their $\beta$ values have different parity. The other cases
can be done similarly.

Next we prove that any element in $L(3,n)$ belongs to one of the seven types. Let $\alpha$ and $\beta$ be defined as above for a given $\lambda=(\lambda_1,\lambda_2,\lambda_3)$.
The following table determines the type of $\lambda$ and their corresponding representations.

$$\begin{array}{|l|l|l|l|l|l|l|}\hline
   \alpha-\beta         & \alpha       &\beta   &k                                 &c                         &\ell                      &\text{type of } \lambda\\\hline
  0                     &  even        & even   &\frac{\alpha}{2}                  &\lambda_3                 & \times                   &A_1)\\\hline
  1                     &  odd         & even   &\frac{\beta}{2}                   &\lambda_3                 & \times                   &A_2)\\\hline
 -1                     &  even        & odd    &\frac{\alpha}{2}                 &\lambda_3                  & \times                   &A_3)\\\hline
 \geq 2                 &              & even   &\frac{\beta}{2}                   &\lambda_3                 &(\alpha-\beta)+\lambda_3  &B_2)\\\hline
 \geq 0                 &              & odd    &\frac{\beta-1}{2}                 &\lambda_3                &(\alpha-\beta)+\lambda_3+2 &B_3)\\\hline
 \leq -2                &  even        &        &\frac{\alpha}{2}                  &-(\alpha-\beta)-2         &\lambda_3+2               &C_1)\\\hline
 \leq -1                &  odd         &        &\frac{\alpha-1}{2}                &-(\alpha-\beta)-1         &\lambda_3+2               &C_2)\\ \hline
\end{array}.
$$
This completes the proof.
\end{proof}

\begin{proof}[Second proof of Theorem \ref{theo:varphi-chain}]
The theorem clearly follows by the following Claims 1 and 2.

Claim 1: For each $k$, $C_\mu$ with $\mu=(4k,2k,0)$ contains elements of type $A$ for all $c$.

Starting at the $A_1$ element $(4k,2k,0)$ with $c=0$, we successively
add $1$ to the first row, the second row and the third row to get $A_2$, $A_3$, and $A_1$ elements respectively. Now we are at the $A_1$ element $(4k+1,2k+1,1)$ with $c=1$.
Continuing this way, we see that Claim 1 holds true.

Claim 2: For each $k$ and $\ell \ge 2$, $C_\mu$ with $\mu=(4k+\ell,2k,0)$ contains all elements of type $B$ and $C$ for all $c$.

Starting at $B_2$ element $(4k+\ell,2k,0)$ with $c=0$, we successively add $1$ to the second row, and the third row to get $B_3$ and $B_2$
elements respectively. Now we are at the $B_2$ element $(4k+\ell,2k+1,1)$ with $c=1$. Continuing this way until we reach
the $B_2$ element $(4k+\ell,2k+\ell-2,\ell -2)$ with $c=\ell-2$. By adding $1$ to the second row, we get the $B_3$ element $(4k+\ell,2k+\ell-1,\ell -2)$
with $c=\ell-2$. This covers all type $B$ elements.

Next we add $1$ to the second row to get the $C_1$ element $(4k+\ell,2k+\ell,\ell -2)$ with $c=0$. After that, we successively add $1$ to the first row, and
the second row to get $C_2$ and $C_1$ elements respectively. Now we are at the $C_1$ element $(4k+\ell+1,2k+\ell+1,\ell -2)$ with $c=1$. Continuing this way, we see that Claim 2 holds true.
\end{proof}

\subsection{Sperner chain decompositions of $L(4,n)$}
Our chain decompositions for $L(4,n)$ are also from the greedy algorithm. Let us see Figure \ref{fig:L48Tab} for the Chain tableaux of $L(4,8)$ for the pattern.
\begin{figure}[!ht]
\centering{
\includegraphics[height=2.4 in]{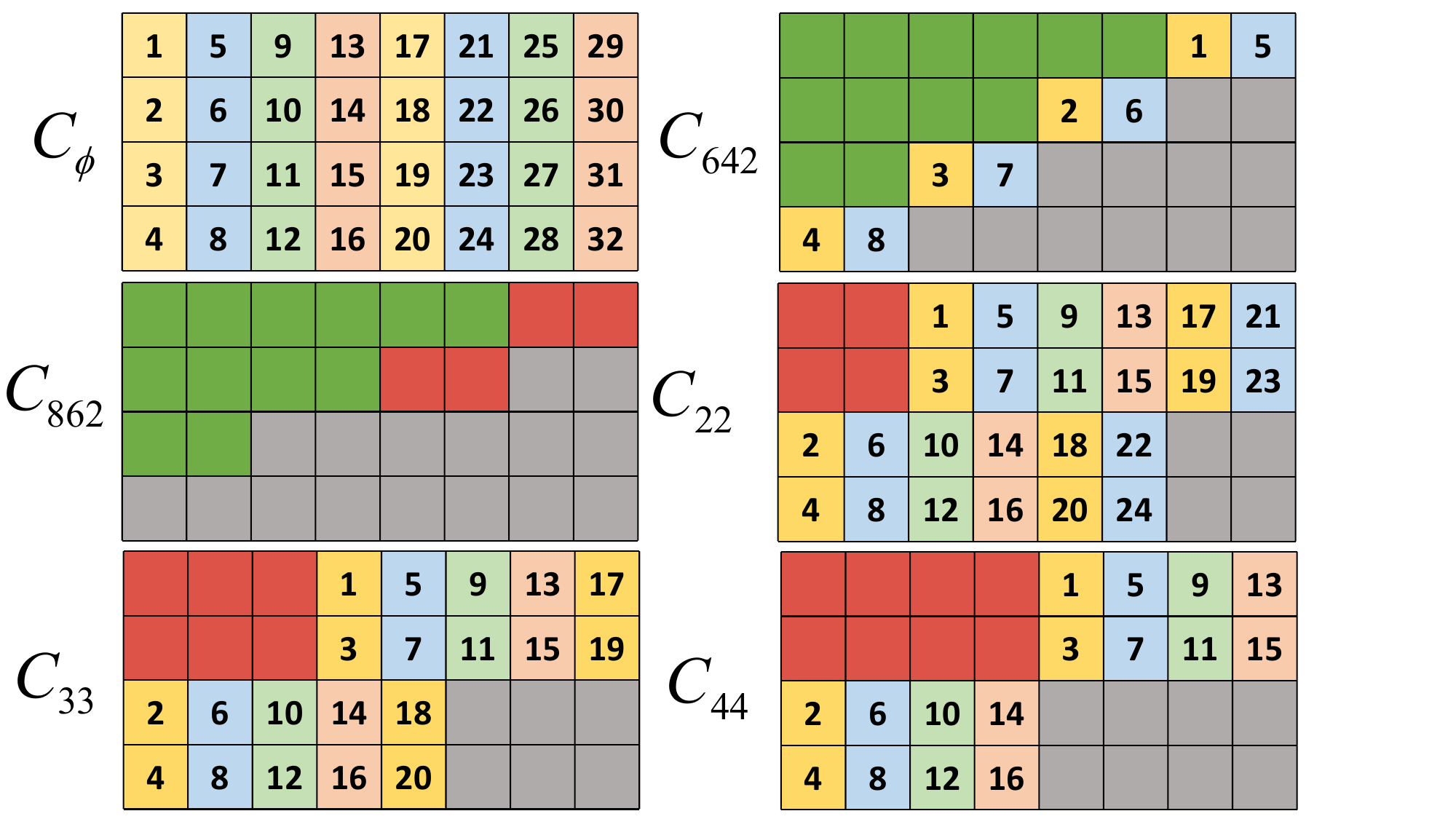}}
\end{figure}

\begin{figure}[!ht]
\centering{
\includegraphics[height=2.4 in]{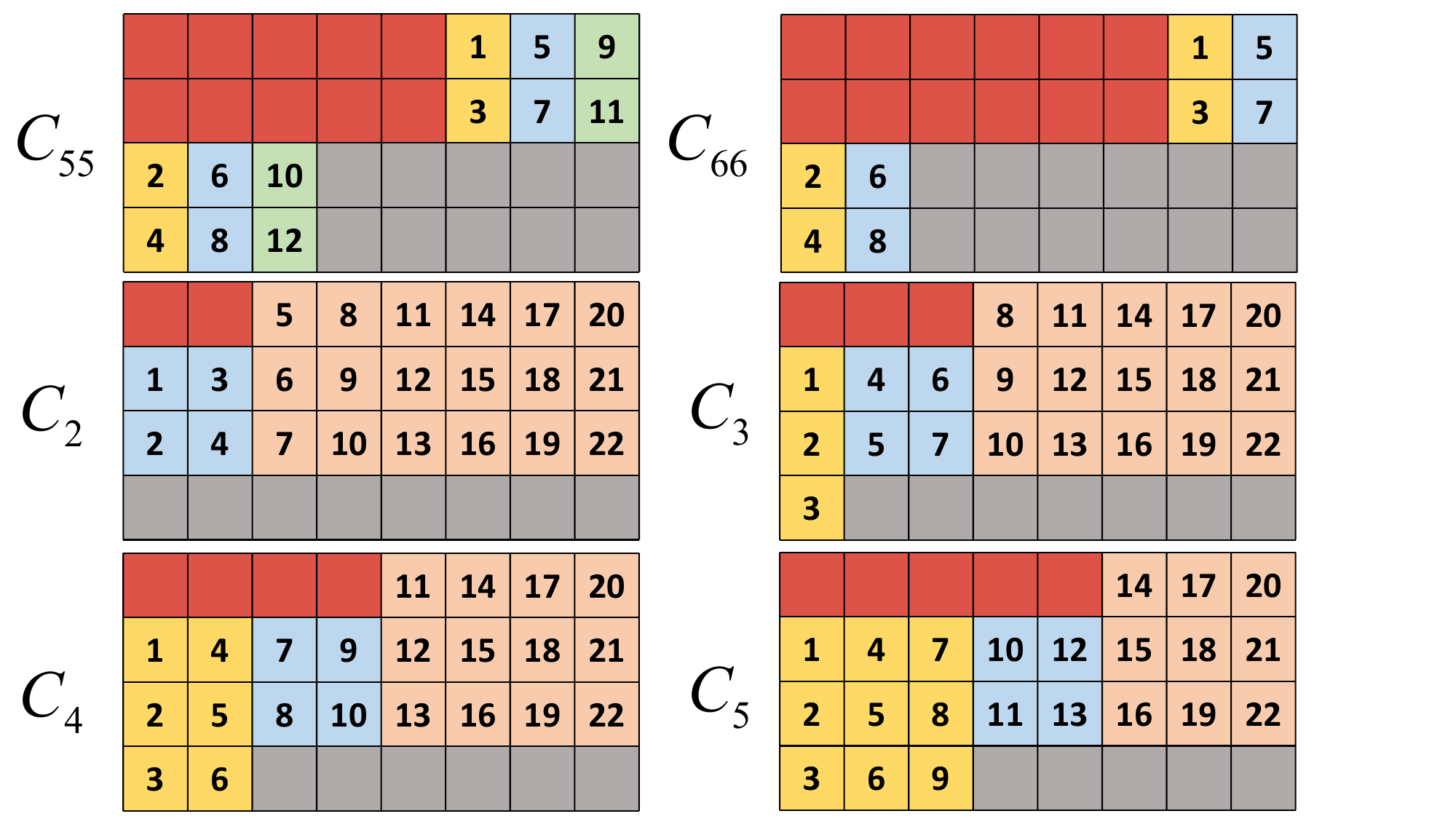}}
\end{figure}

\begin{figure}[!ht]
\centering{
\includegraphics[height=2.4 in]{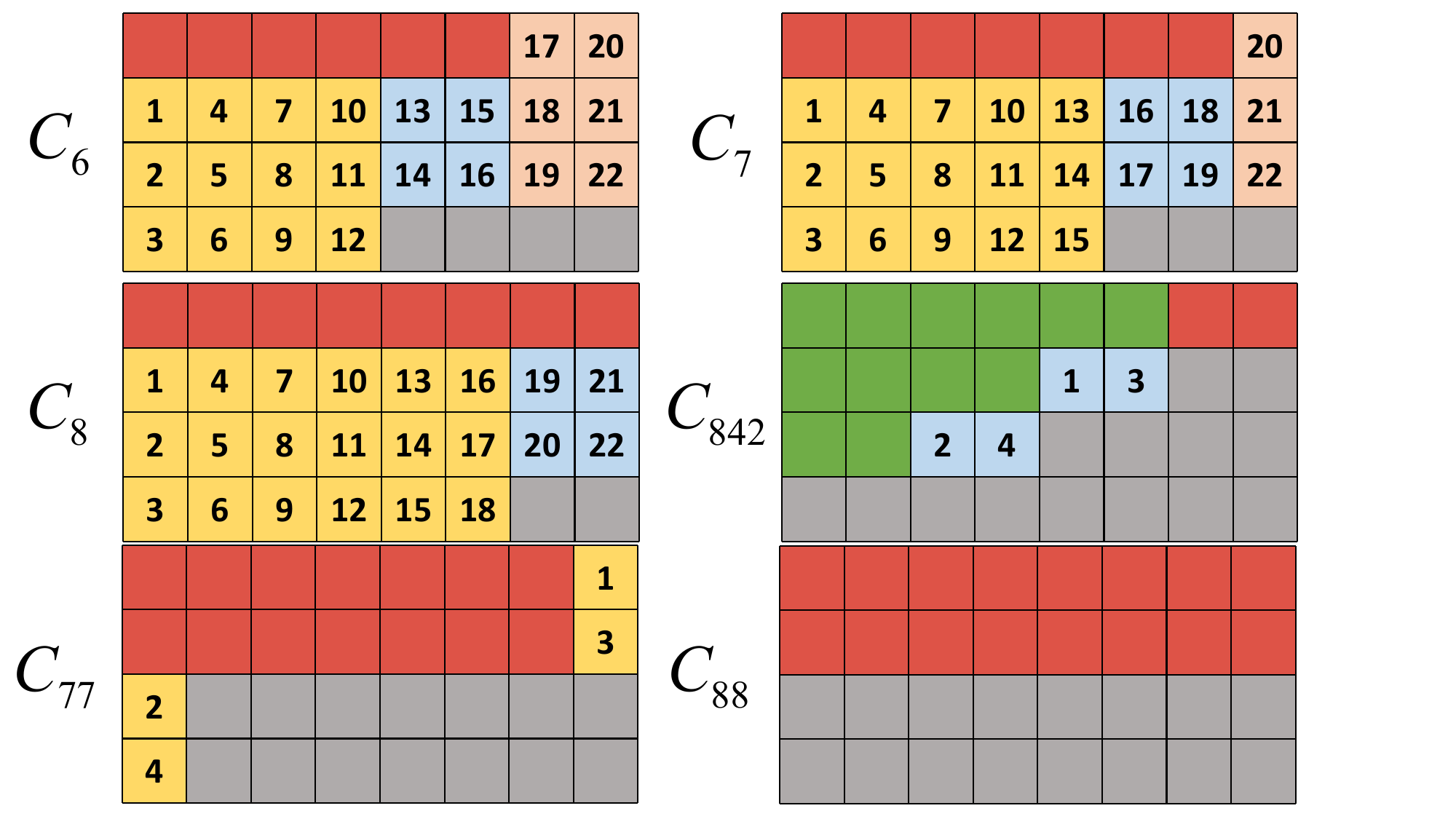}}
\end{figure}

\begin{figure}[!ht]
\centering{
\includegraphics[height=2.4 in]{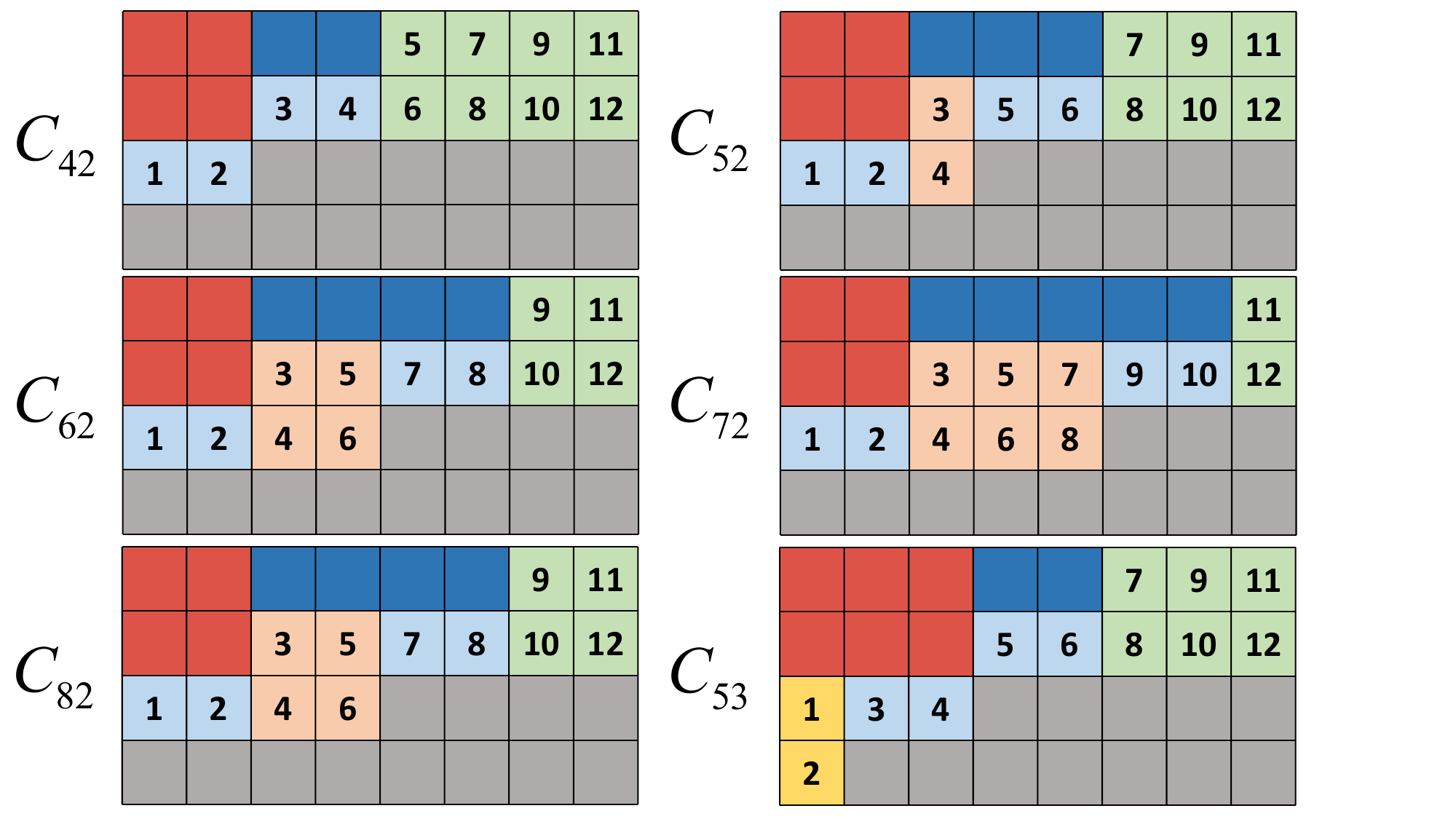}}
\end{figure}

\begin{figure}[!ht]
\centering{
\includegraphics[height=2.4 in]{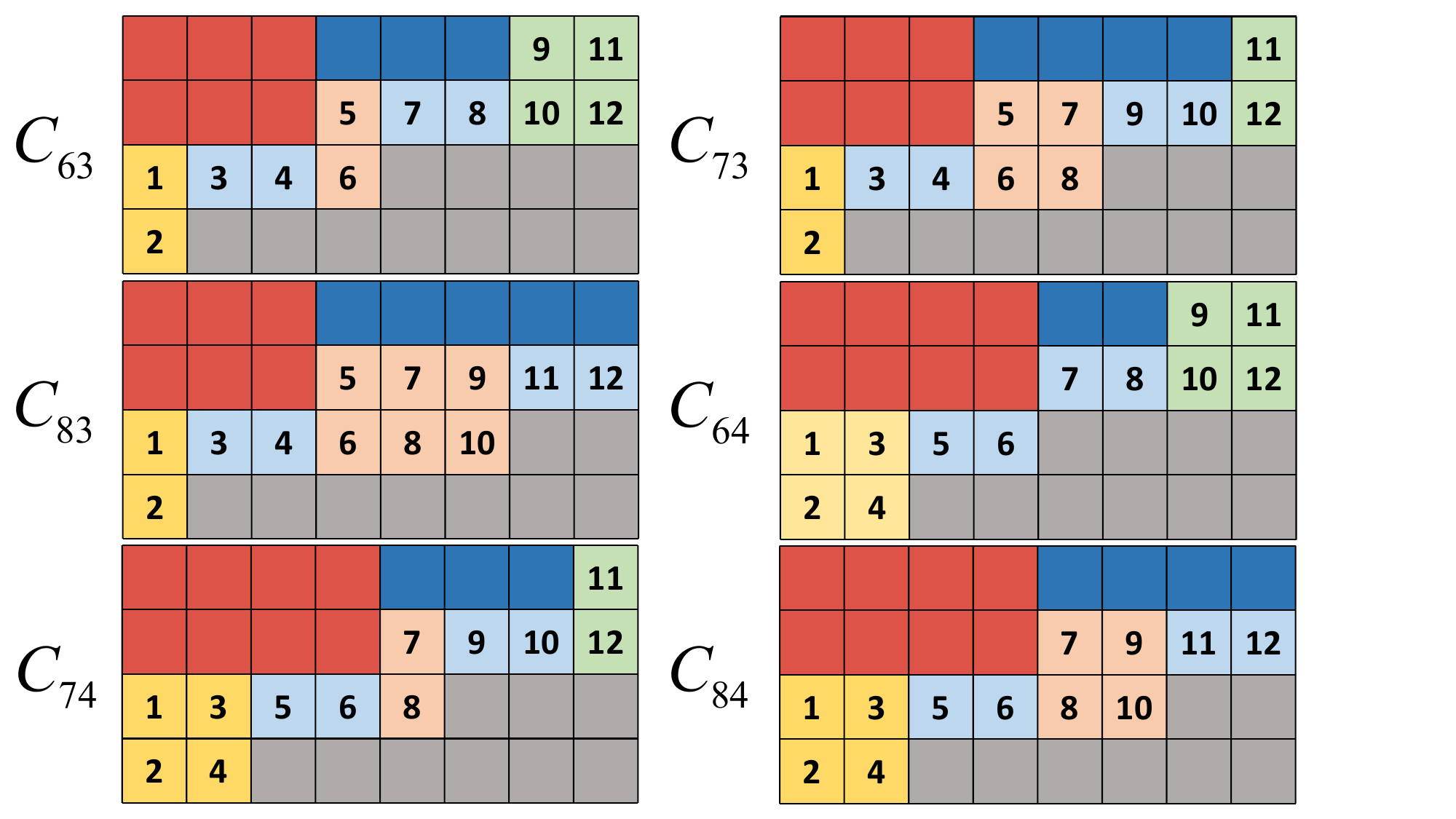}}
\end{figure}

\begin{figure}[!ht]
\centering{
\includegraphics[height=1.64 in]{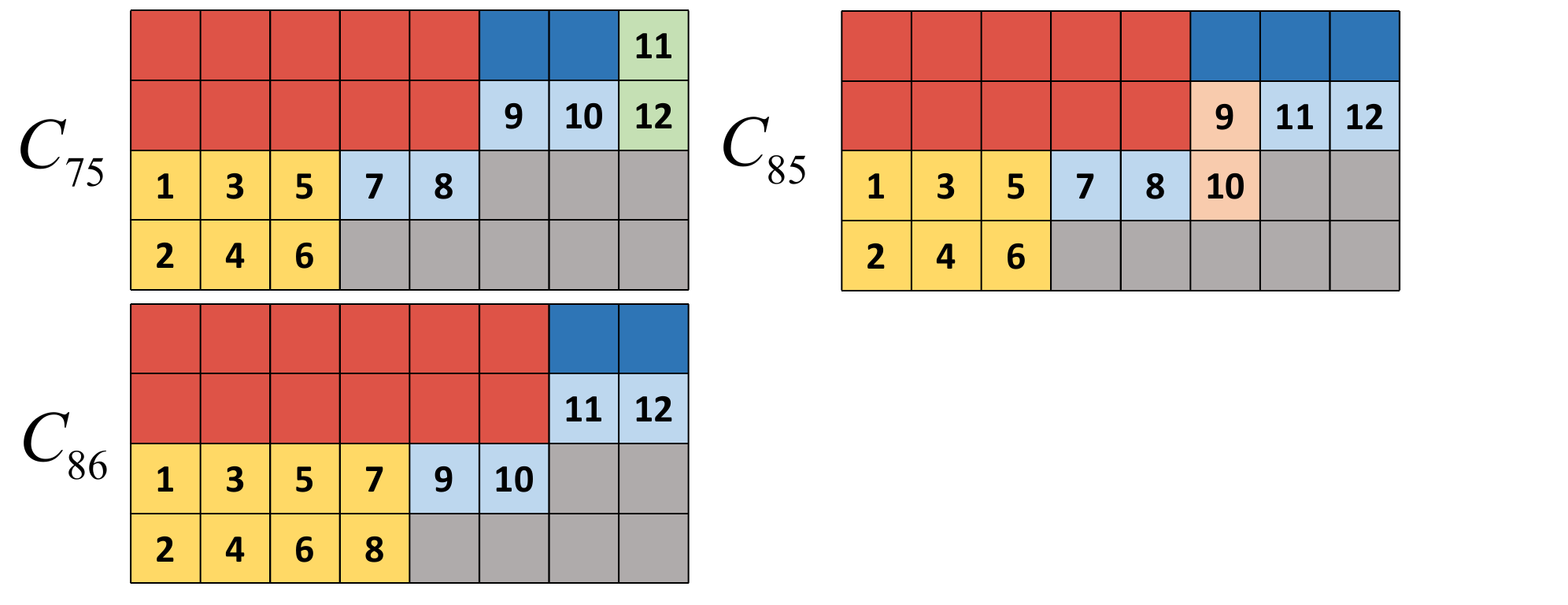}}
\caption{The tableaux of all $L(4,8)$ chains. \label{fig:L48Tab}}
\end{figure}

For general $n$, the result is summarized as follows.
\begin{theo}\label{theo:L4n-chain-dec}
The Young's lattice $L(4,n)$ is a disjoint union of chains, where the corresponding chain tableaux are divided into four types:

A) $C_\mu$ with $\mu=(6k,4k,2k,0)$ for $k=0,1,\dots, \lfloor n/6 \rfloor$, as in Figure
\ref{L4nType1}. Partitions in these chains will be called of type $A_1,A_2,A_3,A_4$;

B) $C_\mu$ with $\mu=(6k+\ell,4k+\ell,2k,0)$, where $\ell \ge 2$ and $k=0,1,\dots,\lfloor (n-\ell)/6 \rfloor$, as in Figure \ref{L4nType2}.
Partitions in these chains will be called of type $B_1,B_3, B_2,B_4$;

C) $C_\mu$ with $\mu=(6k+r,4k,2k,0)$, where $r \ge 2$ and $k=0,1,\dots,\lfloor (n-r)/6 \rfloor$, as in Figure \ref{L4nType3}. Partitions in these chains will be called of type $C_{a2},C_{a3},C_{a4},C_{b2},C_{b_3},C_{b1}$;

D) $C_\mu$ with $\mu=(6k+r+\ell,4k+\ell,2k,0)$, where $\ell \ge 2$, $r \ge 2$ and $k=0,1,\dots,\lfloor (n-r-\ell)/6 \rfloor$, as in Figure \ref{L4nType4}.
Partitions in these chains will be called of type\\
 $D_{a3}, D_{a4}, D_{b2},D_{b3}, D_{c1},D_{c2}$.
\end{theo}

\begin{figure}[!ht]
\centering{
\includegraphics[height=1.6 in]{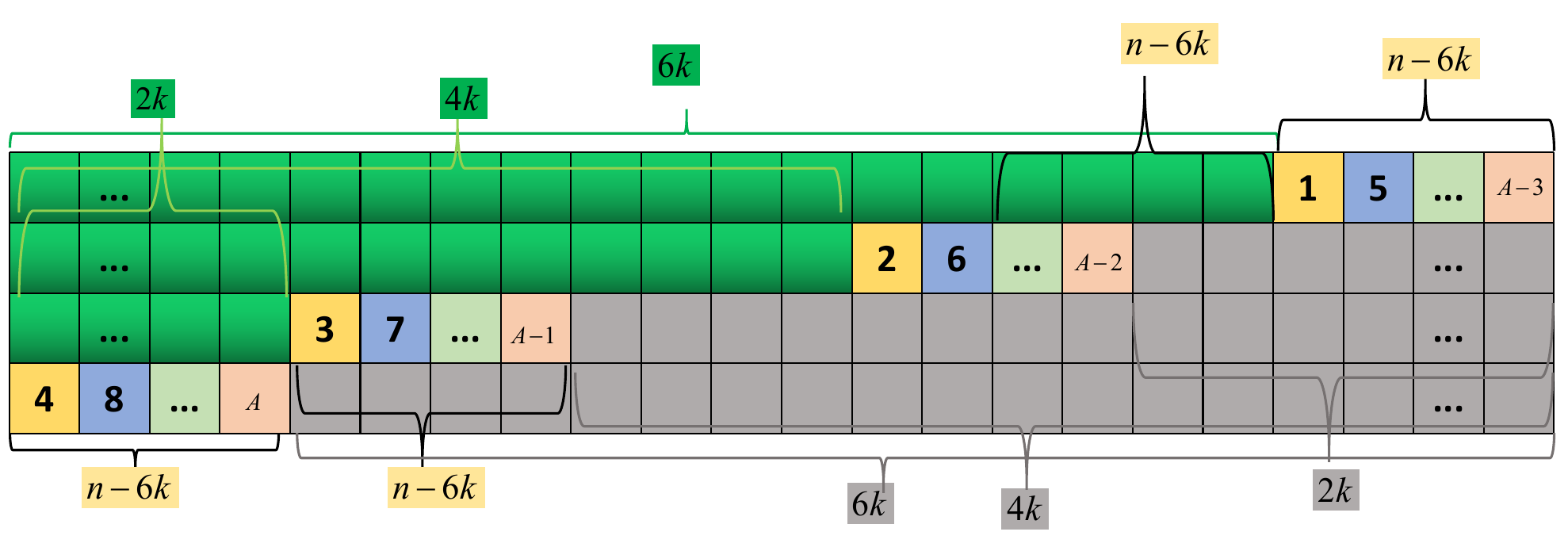}}
\caption{Chain tableaux of Type i) starting at $\mu=(6k,4k,2k,0)$,\ $k\in \mathbb{N}$, where $A=4n-24k$. }\label{L4nType1}
\end{figure}

\begin{figure}[!ht]
\centering{
\includegraphics[height=1.6 in]{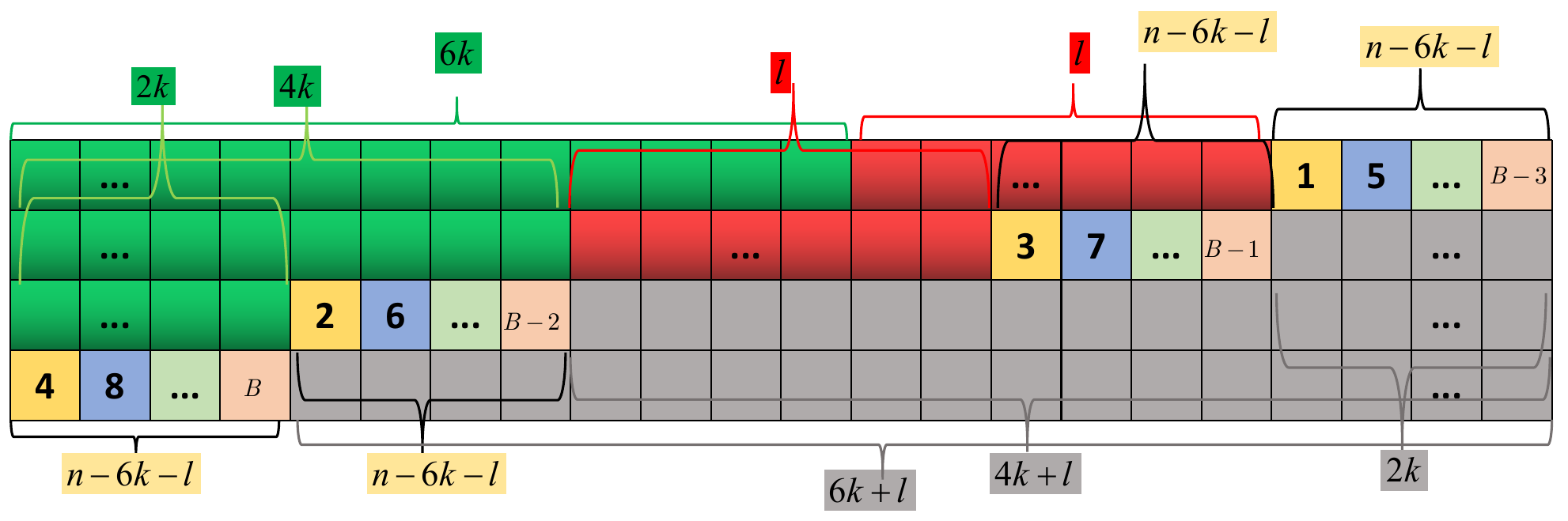}}
\caption{Chain tableaux of Type ii) starting at $\mu=(6k+\ell,4k+\ell,2k,0)$,\ $k\in \mathbb{N}$,$\ell\ge2$, where $B=4n-24k-4\ell$.\label{L4nType2}}
\end{figure}

\begin{figure}[!ht]
\centering{
\includegraphics[height=1.6 in]{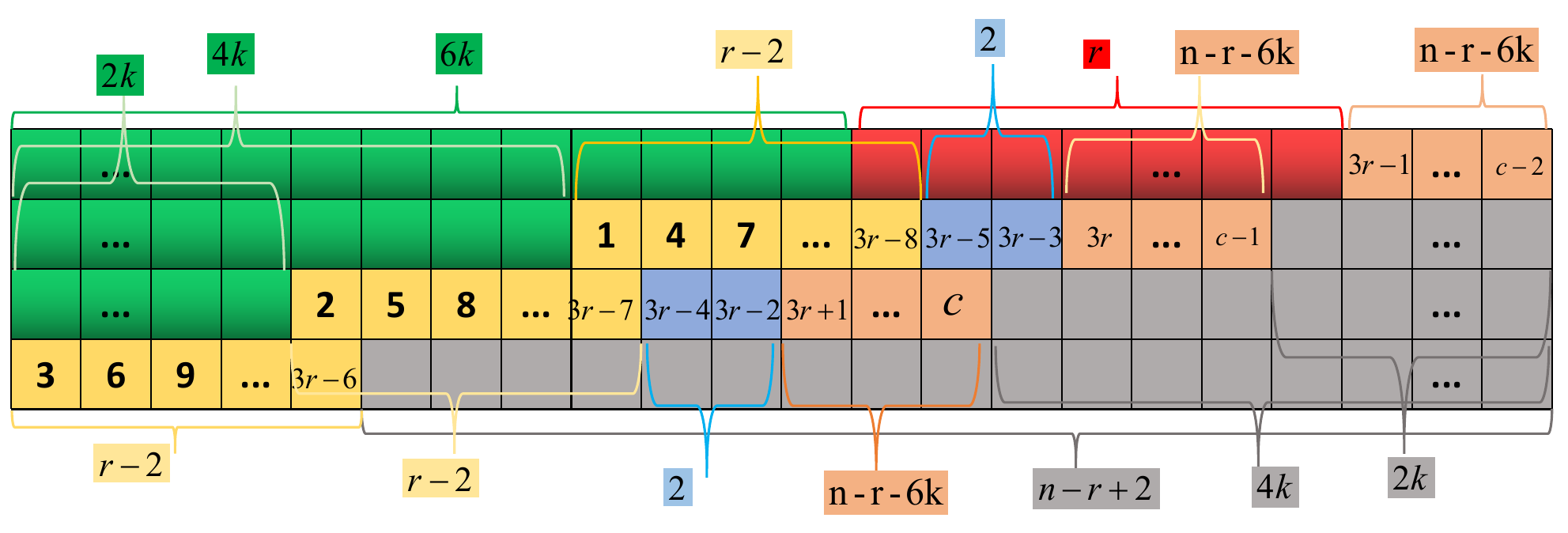}}
\caption{Chain tableaux of Type iii) starting at $\mu=(6k+r,4k,2k,0)$,\ $k\in \mathbb{N}$,$r\ge2$, where $C=3n-18k-2$.}\label{L4nType3}
\end{figure}

\begin{figure}[!ht]
\centering{
\includegraphics[height=1.6 in]{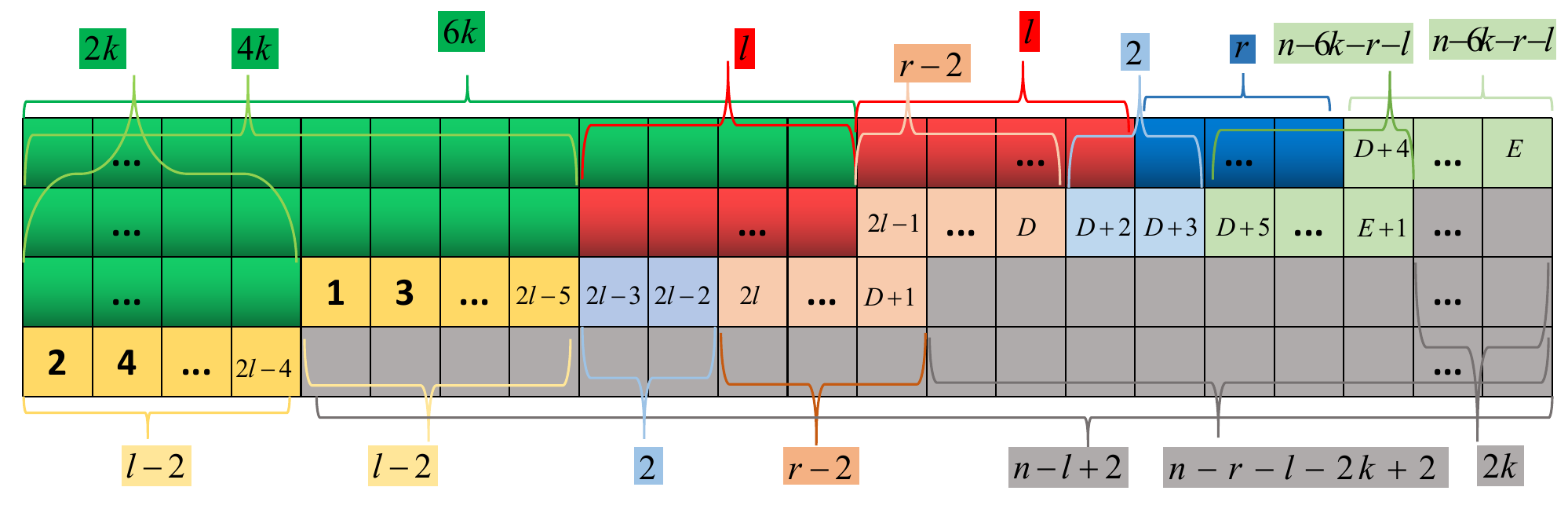}}
\caption{Chain tableaux of Type iv) starting at $\mu=(6k+r+\ell,4k+\ell,2k,0)$,\ $k\in \mathbb{N}$,$r\ge2$,$\ell\ge2$, where $D=2\ell+2r-7$ and $E=2n-12k-5$.}\label{L4nType4}
\end{figure}

The proof of Theorem \ref{theo:L4n-chain-dec} is similar to that of Theorem \ref{theo:varphi-chain}. We only sketch the idea. 

We divide partitions in $L(4,n)$ as in the following lemma.

\begin{lem}\label{lemTabEle}
Any element $\lambda=(\lambda_1,\lambda_2,\lambda_3,\lambda_4)$ in $L(4,n)$ can be expressed in one of the following forms:
\begin{enumerate}
  \item[$A_1)$] $(6k+c,4k+c,2k+c,c),k\geqslant0,n-6k\geqslant c\geqslant0$.
  \item[$A_2)$] $(6k+c+1,4k+c,2k+c,c),k\geqslant0,n-6k-1\geqslant c\geqslant0$.
  \item[$A_3)$] $(6k+c+1,4k+c+1,2k+c,c),k\geqslant0,n-6k-1\geqslant c\geqslant0$.
  \item[$A_4)$] $(6k+c+1,4k+c+1,2k+c+1,c),k\geqslant0,n-6k-1\geqslant c\geqslant0$.
  \item[$B_1)$] $(6k+\ell+c,4k+\ell+c,2k+c,c),k\geqslant0,\ell\geqslant2, n-6k-\ell \geqslant c \geqslant0$.
  \item[$B_3)$] $(6k+\ell+c+1,4k+\ell+c,2k+c,c),k\geqslant0,\ell\geqslant2,n-6k-\ell-1 \geqslant c \geqslant0$.
  \item[$B_2)$] $(6k+\ell+c+1,4k+\ell+c,2k+c+1,c),k\geqslant0,\ell\geqslant2,n-6k-\ell-1 \geqslant c \geqslant0$.
  \item[$B_4)$] $(6k+\ell+c+1,4k+\ell+c+1,2k+c+1,c),k\geqslant0,\ell\geqslant2,n-6k-\ell-1\geqslant c \geqslant0$.
  \item[$C_{a2})$] $(6k+r,4k+c,2k+c,c),k\geqslant0,r\geqslant2,r-2\geqslant c \geqslant0$.
  \item[$C_{a3})$] $(6k+r,4k+c+1,2k+c,c),k\geqslant0,r\geqslant2,r-2\geqslant c \geqslant0$.
  \item[$C_{a4})$] $(6k+r,4k+c+1,2k+c+1,c),k\geqslant0,r\geqslant2,r-3\geqslant c \geqslant0$.
  \item[$C_{b2})$] $(6k+r+c,4k+r-1+c,2k+r-1+c,r-2),k\geqslant0,r\geqslant2,n-r-6k\geqslant c \geqslant0$.
  \item[$C_{b3})$] $(6k+r+c,4k+r+c,2k+r-1+c,r-2),k\geqslant0,r\geqslant2,n-r-6k\geqslant c \geqslant0$.
  \item[$C_{b1})$] $(6k+r+c,4k+r+c,2k+r+c,r-2),k\geqslant0,r\geqslant2,n-r-6k\geqslant c \geqslant0$.
  \item[$D_{a3})$] $(6k+\ell+r,4k+\ell,2k+c,c),k\geqslant0,\ell\geqslant2,r\geqslant2,\ell-2 \geqslant c\geqslant0$.
  \item[$D_{a4})$] $(6k+\ell+r,4k+\ell,2k+c+1,c),k\geqslant0,\ell\geqslant2,r\geqslant2,\ell-2 \geqslant c\geqslant0$.
  \item[$D_{b2})$] $(6k+\ell+r,4k+\ell+c,2k+\ell+c,\ell-2),k\geqslant0,\ell\geqslant2,r\geqslant2,r-2 \geqslant c\geqslant0$.
  \item[$D_{b3})$] $(6k+\ell+r,4k+\ell+c+1,2k+\ell+c,\ell-2),k\geqslant0,\ell\geqslant2,r\geqslant2,r-2 \geqslant c\geqslant0$.
  \item[$D_{c1})$] $(6k+\ell+r+c,4k+\ell+r+c,2k+\ell+r-2,\ell-2),k\geqslant0,\ell\geqslant2,r\geqslant2,n-6k-r-\ell \geqslant c\geqslant0$.
  \item[$D_{c2})$] $(6k+\ell+r+c+1,4k+\ell+r+c,2k+\ell+r-2,\ell-2),k\geqslant0,\ell\geqslant2,r\geqslant2,n-6k-r-\ell-1 \geqslant c\geqslant0$.
\end{enumerate}
\end{lem}

\begin{proof}
Clearly partitions in each type are different from each other. To show that partitions from different types can not equal, we compute in the following table, where
for $\lambda=(\lambda_1,\lambda_2,\lambda_3,\lambda_4)$, we define $\alpha=\lambda_1-\lambda_2,\beta =\lambda_2-\lambda_3,\gamma=\lambda_3-\lambda_4$.
The proof is completed by data in Figures \ref{fig:abc-types}, \ref{L4nProof}, \ref{fig:type-abc}, and \ref{fig:type-abc-lambda}.

\begin{figure}[!ht]
\[\begin{array}{|l|l|l|l|l|l|l|}\hline
 \lambda  & \alpha-\beta        & \alpha-\gamma          & \beta-\gamma             &  \alpha               & \beta            &\gamma             \\\hline
A_1) & 0                         & 0                      & 0                        &  even                 & even              & even           \\\hline
A_2) & 1                         & 1                      & 0                        &  odd                  & even              & even          \\\hline
A_3) & -1                        & 0                      & 1                        &  even                 & odd               & even          \\\hline
A_4) & 0                         & -1                     & -1                       &  even                 & even              & odd          \\\hline
B_1) & -\ell \leqslant -2        & 0                      & \ell\geqslant 2          &  even                 &                   & even          \\\hline
B_3) & 1-\ell\leqslant -1        & 1                      & \ell\geqslant 2          &  odd                  &                   & even          \\\hline
B_2) & 2-\ell\leqslant 0         & 0                      & \ell-2\geqslant 0        &  odd                  &                   & odd          \\\hline
B_4) & -\ell \leqslant -2        & -1                     & \ell-1\geqslant 1        &  even                 &                   & odd          \\\hline
C_{a2}) & r-c\geqslant2          & r-c\geqslant 2         & 0                        &                       & even              & even          \\\hline
C_{a3}) & r-c-2\geqslant 0       & r-c-1\geqslant1        & 1                        &                       & odd               & even          \\\hline
C_{a4}) & r-c-1\geqslant2        & r-c-2\geqslant1        & -1                       &                       & even              & odd          \\\hline
C_{b2}) & 1                      & -c  \leqslant-1        & -1-c\leqslant-1          &  odd                  & even              &           \\\hline
C_{b3}) & -1                     & -1-c\leqslant-1        & -c\leqslant0             &  even                 & odd               &           \\\hline
C_{b1}) & 0                      & -2-c\leqslant-2        & -2-c\leqslant-2          &  even                 & even              &           \\\hline
D_{a3}) & r-\ell+c               & r\geqslant2            & \ell-c\geqslant2        &                       &                   & even          \\\hline
D_{a4}) & r-\ell+c+1             & r-1\geqslant1          & \ell-c-2\geqslant0      &                       &                   & odd          \\\hline
D_{b2}) & r-c\geqslant2          & r-2c-2                 & -2-c\leqslant-2         &                       &  even             &           \\\hline
D_{b3}) & r-c-2\geqslant0        & r-2c-3                 & -1-c\leqslant-1         &                       &  odd              &           \\\hline
D_{c1}) & -2-c\leqslant-2        & -r\leqslant-2          & c+2-r                   &  even                 &                   &           \\\hline
D_{c2}) & -1-c\leqslant-1        & 1-r\leqslant-1         & c+2-r                   &  odd                  &                   &           \\\hline
\end{array}\]
\caption{The $\alpha, \beta,\gamma$ values for different types.}\label{fig:abc-types}
\end{figure}

\begin{figure}[!ht]
\centering{
\includegraphics[height=5 in]{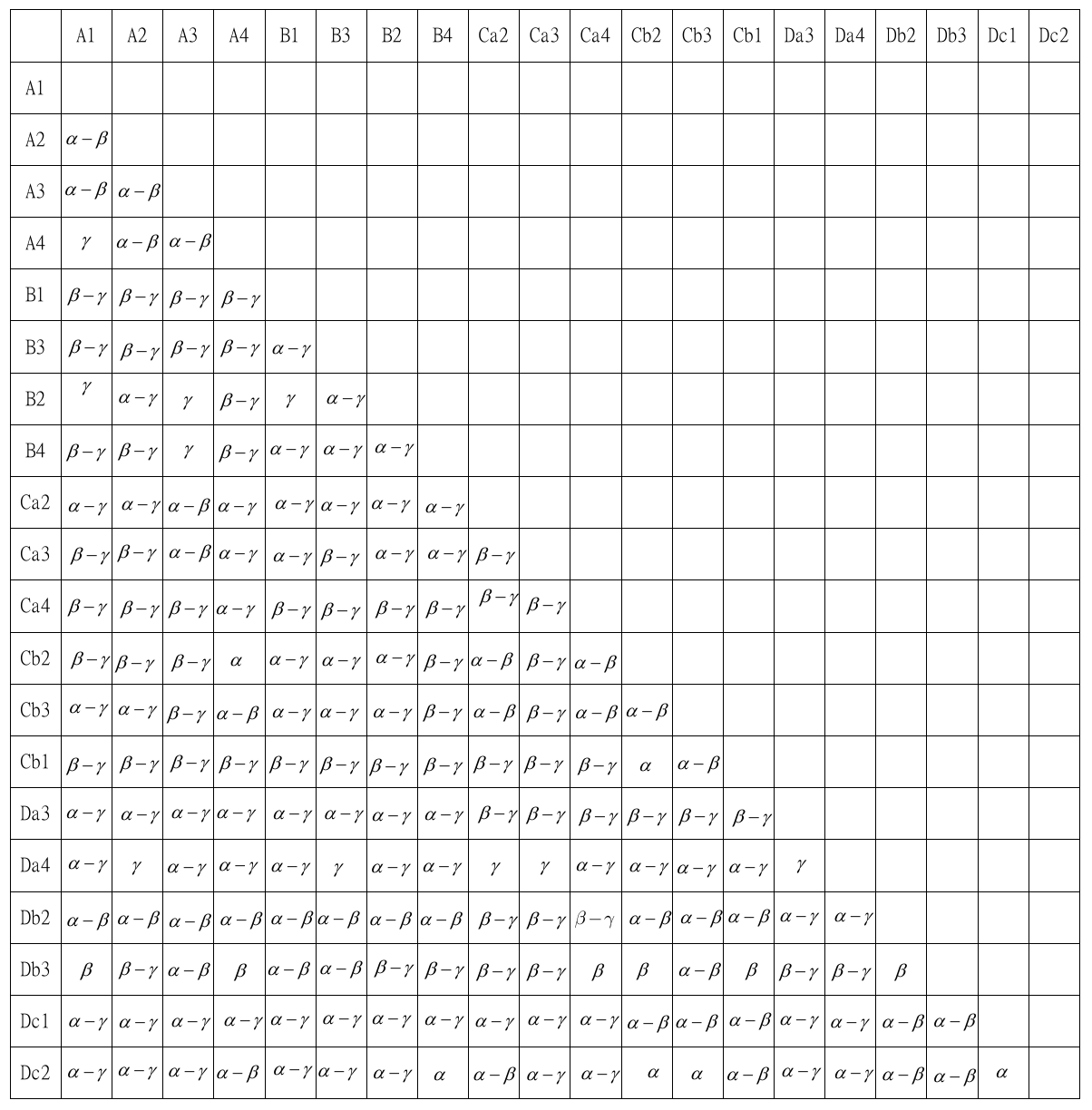}}
\caption{Proof of the distinction of partitions from different types. }\label{L4nProof}
\end{figure}

\begin{figure}[!ht]

$$\begin{array}{|l|l|l|l|l|l|l|}\hline
 \alpha-\beta   & \alpha-\gamma  & \beta-\gamma   &  \alpha  & \beta  &\gamma &\lambda    \\\hline
 0              & 0              & 0              &  even    & even   & even  &A_1        \\\hline
 1              & 1              & 0              &  odd     & even   & even  &A_2         \\\hline
 -1             & 0              & 1              &  even    & odd    & even  &A_3         \\\hline
 0              & -1             & -1             &  even    & even   & odd   &A_4            \\\hline
 \leqslant -2   & 0              & \geqslant 2    &  even    &        & even  &B_1  \\\hline
 \leqslant -1   & 1              & \geqslant 2    &  odd     &        & even  &B_3   \\\hline
 \leqslant 0    & 0              & \geqslant 0    &  odd     &        & odd   &B_2\\\hline
 \leqslant -2   & -1             & \geqslant 1    &  even    &        & odd   &B_4   \\\hline
 \geqslant2     & \geqslant 2    & 0              &          & even   & even  &C_{a2}  \\\hline
 \geqslant 0    & \geqslant1     & 1              &          & odd    & even  &C_{a3} \\\hline
 \geqslant2     & \geqslant1     & -1             &          & even   & odd   &C_{a4} \\\hline
 1              & \leqslant0     & \leqslant-1    &  odd     & even   &       &C_{b2} \\\hline
  -1            & \leqslant-1    & \leqslant0     &  even    & odd    &       &C_{b3} \\\hline
 0              & \leqslant-2    & \leqslant-2    &  even    & even   &       &C_{b1}  \\\hline
                & \geqslant2     & \geqslant2     &          &        &even   &D_{a3}     \\\hline
                & \geqslant1     & \geqslant0     &          &        & odd   &D_{a4}    \\\hline
\geqslant2      &                & \leqslant-2    &          &  even  &       &D_{b2}  \\\hline
 \geqslant0     &                & \leqslant-1    &          &  odd   &       &D_{b3}  \\\hline
 \leqslant-2    & \leqslant-2    &                &  even    &        &       &D_{c1}  \\\hline
 \leqslant-1    & \leqslant-1    &                &  odd     &        &       &D_{c2}  \\\hline
\end{array}
$$
\caption{Determine the types from the $\alpha,\beta,\gamma$ values.}\label{fig:type-abc}
\end{figure}

\begin{figure}[!ht]
$$\begin{array}{|l|l|l|l|l|}\hline
\lambda      &k                   &\ell                              &r                                 &c  \\\hline
A_1          &\frac{\alpha}{2}    &\times                            &\times                            &\lambda_4  \\\hline
A_2          &\frac{\beta}{2}     &\times                            &\times                            &\lambda_4  \\\hline
A_3          &\frac{\alpha}{2}    &\times                            &\times                            &\lambda_4        \\\hline
A_4          &\frac{\alpha}{2}    &\times                            &\times                            &\lambda_4      \\\hline
B_1          &\frac{\alpha}{2}    &-(\alpha-\beta)                   &\times                            &\lambda_4        \\\hline
B_3          &\frac{\alpha-1}{2}  &-(\alpha-\beta)+1                 &\times                            &\lambda_4\\\hline
B_2          &\frac{\alpha-1}{2}  &-(\alpha-\beta)+2                 &\times                            &\lambda_4\\\hline
B_4          &\frac{\alpha}{2}    &-(\alpha-\beta)                   &\times                            &\lambda_4\\\hline
C_{a2}       &\frac{\beta}{2}     &\times                            &(\alpha-\beta)+\lambda_4          &\lambda_4\\\hline
C_{a3}       &\frac{\beta-1}{2}   &\times                            &(\alpha-\beta)+\lambda_4+2        &\lambda_4\\\hline
C_{a4}       &\frac{\beta}{2}     &\times                            &\lambda_4+2                       &\lambda_4\\\hline
C_{b2}       &\frac{\beta}{2}     &\times                            &\lambda_4+2                       &-(\beta-\gamma)-1\\\hline
C_{b3}       &\frac{\alpha}{2}    &\times                            &\lambda_4+2                       &-(\alpha-\gamma)-1\\\hline
C_{b1}       &\frac{\alpha}{2}    &\times                            &\lambda_4+2                       &-(\alpha-\gamma)-2\\\hline
D_{a3}       &\frac{\gamma}{2}    &(\beta-\gamma)+\lambda_4           &(\alpha-\gamma)                   &\lambda_4\\\hline
D_{a4}       &\frac{\gamma-1}{2}  &(\beta-\gamma)+\lambda_4+2         &(\alpha-\gamma)+1                 &\lambda_4\\\hline
D_{b2}       &\frac{\beta}{2}     &\lambda_4+2                      &(\alpha-\beta)-(\beta-\gamma)-2    &-(\beta-\gamma)-2\\\hline
D_{b3}       &\frac{\beta-1}{2}   &\lambda_4+2                      &(\alpha-\beta)-(\beta-\gamma)+1    &-(\beta-\gamma)-1\\\hline
D_{c1}       &\frac{\alpha}{2}    &\lambda_4+2                       &(\alpha-\gamma)                   &-(\alpha-\beta)-2\\\hline
D_{c2}       &\frac{\alpha-1}{2}  &\lambda_4+2                       &(\alpha-\gamma)+1                 &-(\alpha-\beta)-1\\\hline
\end{array}
$$
\caption{From the types and $\alpha,\beta,\gamma$ values to the partitions.}\label{fig:type-abc-lambda}
\end{figure}
\end{proof}

\section{Recursive Sperner chain decompositions of $L(m,n)$ \label{sec:RecursiveSperner}}
A basic idea is that by using the dual operation,  we only need the upper half part of $L(m,n)$ to construct the a Sperner chain decomposition.
\begin{defn}
$L^{U}(m,n)=:\{\lambda\mid\lambda\in L(m,n), \left| \lambda \right|\le d_{m,n}, \ d_{m,n}=\left \lfloor \frac{mn+1}{2} \right \rfloor\}$.
\end{defn}

\begin{defn}
A U-decomposition of $L^{U}(m,n)$ is a chain decomposition $C_1,C_2,\cdots,C_k$, where $C_i= \lambda_{i,1} \le \lambda_{i,2} \le \cdots \le \lambda_{i,t_{i}} ,\ i=1,2,\cdots,k$ satisfy the following conditions:

(1) $L^{U}(m,n)=C_1\cup C_2\cup \cdots \cup C_k$;

(2) $C_1,C_2,\cdots,C_k$ are  disjoint;

(3) $\rank(\lambda_{i,t_i})=d_{m,n}$.
\end{defn}

\begin{exam}
 The U decompositions of $L^{U}(2,5)$ and $L^{U}(2,8)$ are given in Figure \ref{L25L28U}.

\begin{figure}[!ht]
\centering{
\includegraphics[height=2.5 in]{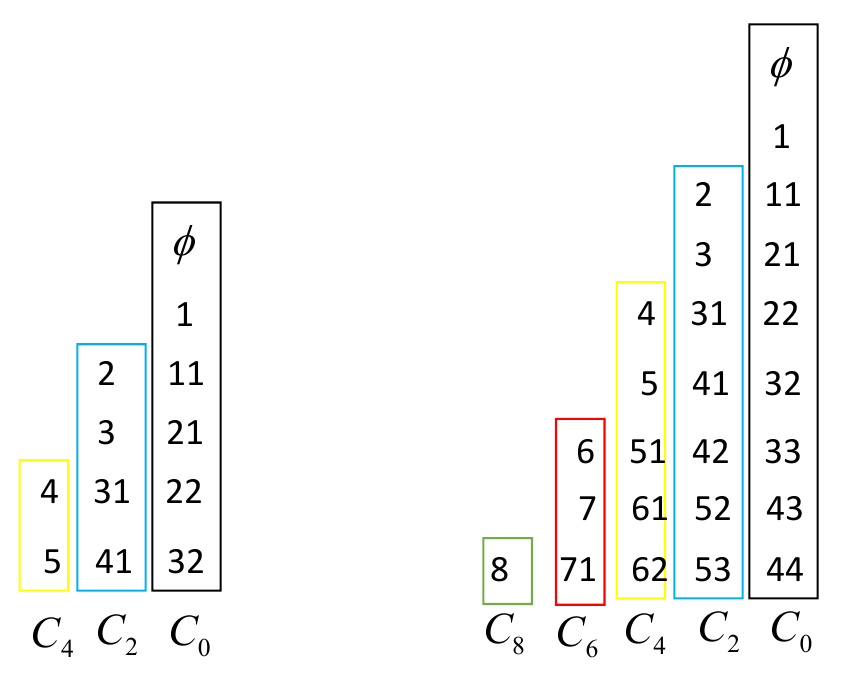}}
\caption{The U decomposition of $L^{U}(2,5)$ and $L^{U}(2,8)$.}\label{L25L28U}
\end{figure}
\end{exam}

Clearly, the dual of a U-decomposition of $L^{U}(m,n)$ is also a chain decomposition, but of the lower half of $L(m,n)$.
We can knead them together to obtain a Sperner chain decomposition of $L(m,n)$.

\begin{lem}\label{SpernerChain}
If $L^{U}(m,n)$ has a U-decomposition $\{C_1,C_2,\cdots,C_k\}$, where $C_i$ is $\lambda_{i,1} \le \lambda_{i,2} \le \cdots  \le \lambda_{i,t_{i}}(i=1,2,\cdots,k)$,
then $L(m,n)$ has a Sperner chain decomposition,
and $\{\lambda_{i,1},i=1,2,\cdots,k\}$  is the starting set for the chain decomposition,
$\{\lambda^*_{i,1},i=1,2,\cdots,k\}$ is the end point set of the chain decomposition.
\end{lem}

\begin{proof}
we discuss the parity of $mn$ as follows.
\begin{itemize}
  \item [(1).] When $mn$ is even, since the proposition $L(m,n)$ is rank-symmetric, we get $\{\lambda_{i,t_i},i=1,2,\cdots,k\}=\{\lambda^*_{i,t_i},i=1,2,\cdots,k\}$.
             So we knead the chain $C=\{C_1,C_2,\cdots,C_k\}$ and the chain $C^{\star} = \{C^{\star}_1,C^{\star}_2,\cdots,C^{\star}_k \}$ together to form a new chain denoted as $\tilde{C}=\{\tilde{C}_1,\tilde{C}_2,...,\tilde{C}_k\}$.
  \item [(2).] When $mn$ is odd, since the proposition $L(m,n)$ is rank-symmetric, we get $\{\lambda_{i,t_i},i=1,2,\cdots,k\}=\{\lambda^*_{i,t_i-1},i=1,2,\cdots,k\}$.
             So we knead the chain $C=\{C_1,C_2,\cdots,C_k\}$ and the chain $C^{\star} = \{C^{\star}_1,C^{\star}_2,\cdots,C^{\star}_k \}$ together to form a new chain denoted as $\tilde{C}=\{\tilde{C}_1,\tilde{C}_2,\cdots,\tilde{C}_k\}$.
\end{itemize}
In a word, we get the Sperner chain decomposition $\tilde{C}=\{\tilde{C}_1,\tilde{C}_2,\cdots,\tilde{C}_k\}$ of $L(m,n)$.
\end{proof}

The above Lemma \ref{SpernerChain} is best explained by an example.
\begin{exam}
The U-decomposition of $L^{U}(3,3)$ is given by $C_1, C_2, C_3$. Then $C^*_1,C^*_2,C^*_3$ are constructed.
We get the Sperner chain decomposition $\tilde{C}=\{\tilde{C}_1,\tilde{C}_3,\tilde{C}_3\}$ of $L(3,3)$. See Figure \ref{fig:U-dec-L33}.

\begin{figure}[!ht]
\centering{
\includegraphics[height=3.5 in]{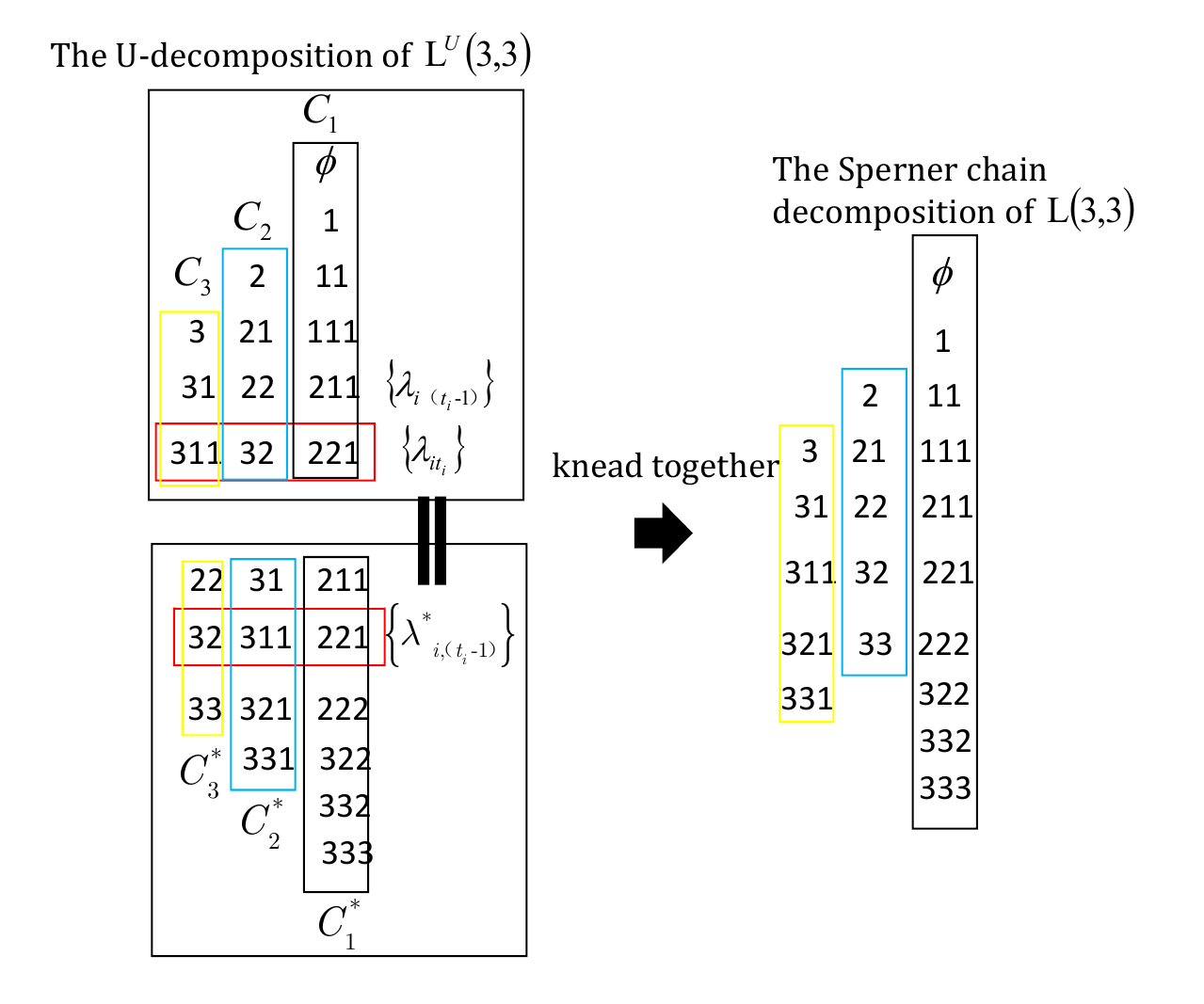}}
\caption{The knead process for $L(3,3)$.   }\label{fig:U-dec-L33}
\end{figure}

\end{exam}

%\begin{exam}
%The U-decomposition of $L^{U}(3,3)$:\par
%$C_1:\varnothing \le 1 \le 11 \le 111 \le 211 \le 221$;
%$C_2:2 \le 21 \le 22 \le 32$;
%$C_3:3 \le 31 \le 311$.
%\end{exam}

\subsection{The recursive construction}
To decompose $L^U(m,n)$, we use the natural recursion $L(m,n)= L(m,n-1) \biguplus (n\oplus L(m-1, n))$, where
$n\oplus (\lambda_1,\dots, \lambda_{m-1})=(n,\lambda_1,\dots, \lambda_{m-1})$ and $n\oplus L(m-1,n)=\{n\oplus \lambda: \lambda \in L(m- 1,n)\}$.

\noindent
\textbf{Algorithm RecUD}

\noindent
\textbf{Input}: The U-decompositions of $L^U(m,n-1)$ and $L^U(m-1,n)$.\\
\noindent
\textbf{Output}: The U-decomposition of $L^U(m,n)$ if possible.

\begin{enumerate}
\item Construct the Sperner Chain decomposition of $L(m,n-1)$ from the U-decomposition of $L^U(m,n-1)$.
Chains end at a  rank less than $d_{m,n}$ is called bad chains and need further operation.
Other chains are cut at rank $d_{m,n}$ and kept as good chains.
Let $E=\{\alpha^1,\dots, \alpha^e\}$ be the set of end partitions of bad chains.

\item Map $\lambda$ to $n\bigoplus \lambda$ for all partitions in the U-decomposition of $L^U(m-1,n)$.
Cut all chains at rank $d_{m,n}$. These are candidate chains
for kneading with bad chains.
Let $S=\{\beta^1,\dots, \beta^s\}$ be the set of all starting partitions of these chains.

\item For each $\alpha^i$, add $1$ to the first entry and check if it equals $\beta^{j_i}$.
If true for all $i$ then knead the bad chains
with the corresponding candidate chains.

\item Good chains, kneaded candidate chains, and the remaining candidate chains form the U-decomposition of $L^U(m,n)$.
\end{enumerate}

\begin{exam}
The U-decomposition of $L^{U}(3,5)$ is constructed in Figures \ref{Step1},\ref{Step2},\ref{Step3},\ref{Step4}.
\begin{figure}[!ht]
\centering{
\includegraphics[height=3.0 in]{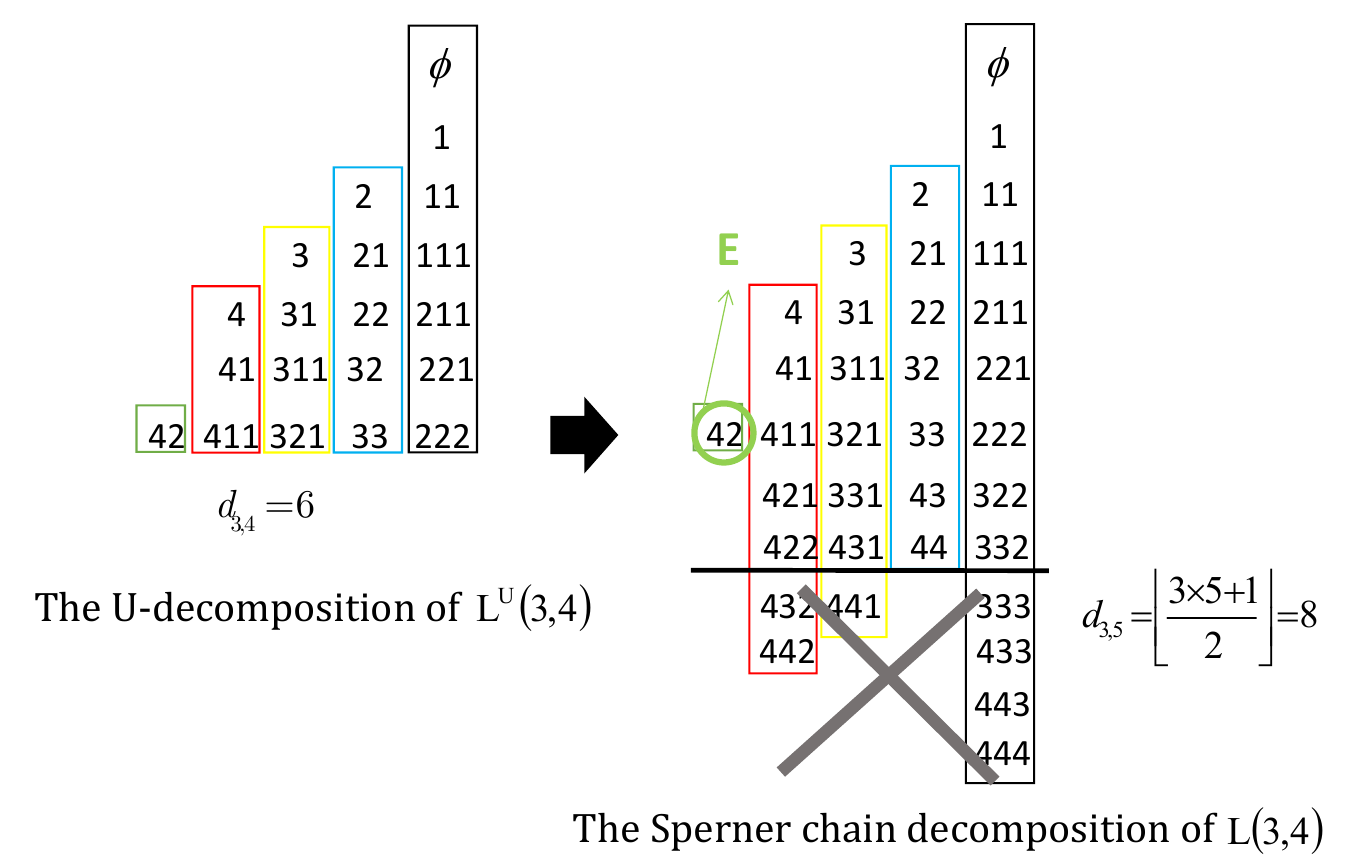}}
\caption{Step 1}\label{Step1}
\end{figure}

\begin{figure}[!ht]
\centering{
\includegraphics[height=2.2 in]{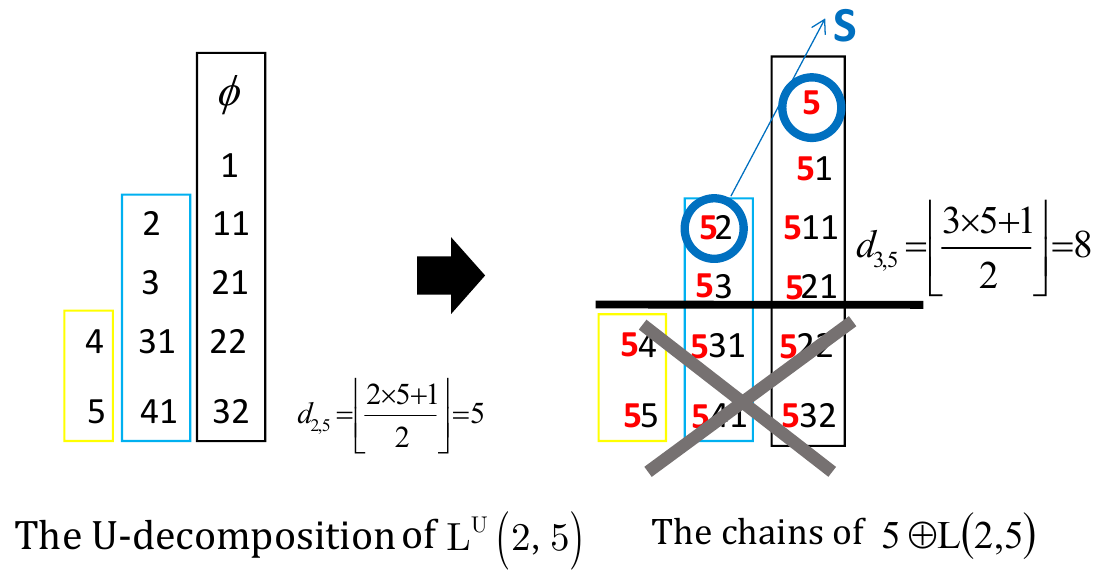}}
\caption{Step 2}\label{Step2}
\end{figure}

\begin{figure}[!ht]
\centering{
\includegraphics[height=2.2 in]{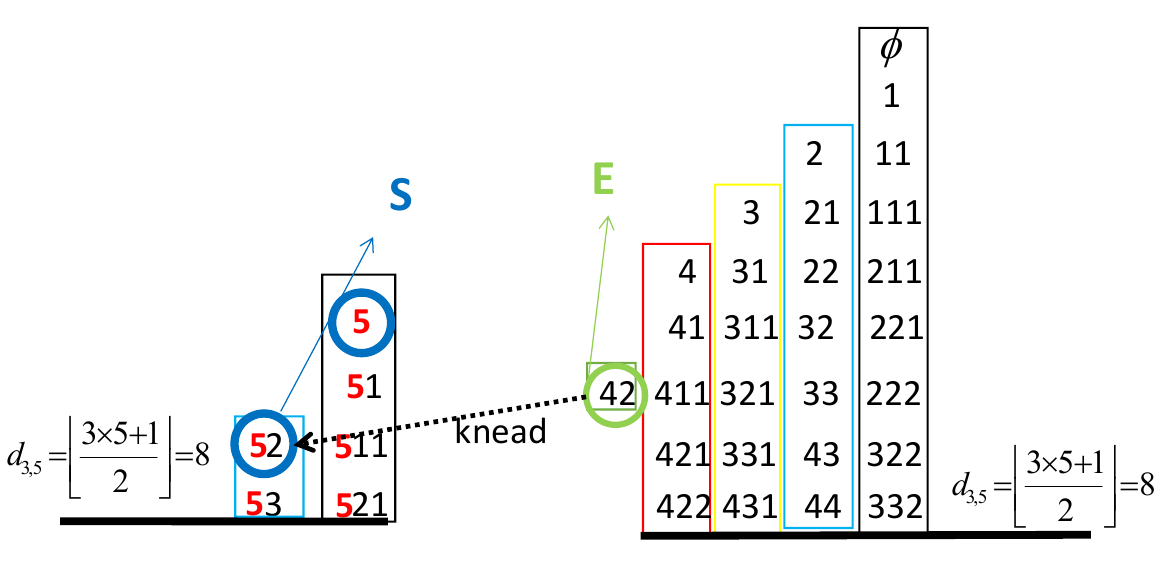}}
\caption{Step 3}\label{Step3}
\end{figure}

\begin{figure}[!ht]
\centering{
\includegraphics[height=2.3 in]{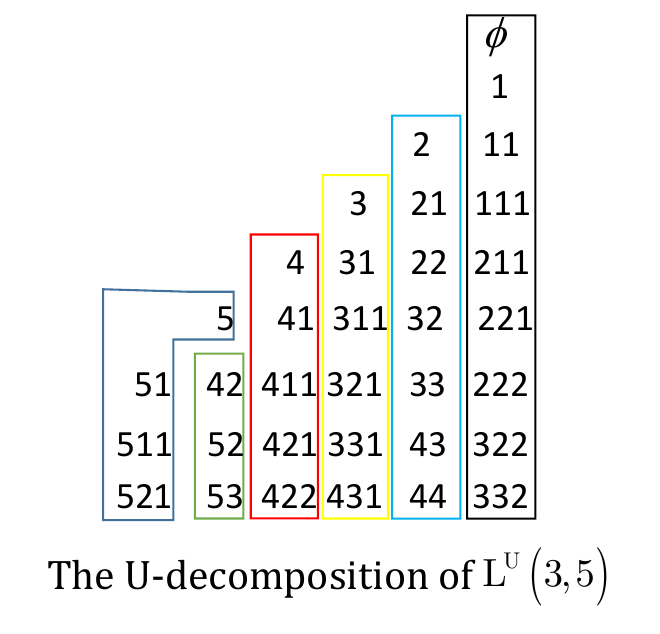}}
\caption{Step 4}\label{Step4}
\end{figure}
\end{exam}

From the algorithm, we see that the starting partitions play an important role. If we only want to
show the existence of a Sperner chain decomposition, the algorithm maybe much simpler.
Indeed, we can obtain the set $S_{m,n}$ of starting partitions .

\noindent
\textbf{Algorithm RecSmn}

\noindent
\textbf{Input}: The sets $S_{m,n-1}$ and $S_{m-1,n}$ of starting partitions for certain Sperner
chain decompositions of $L(m,n-1)$ and $L(m-1,n)$, respectively.

\noindent
\textbf{Output}: The set $S_{m,n}$ of starting partitions for a possible Sperner chain decomposition of $L(m,n)$.

\begin{enumerate}
\item Let $E_{m,n-1}=S_{m,n-1}^*$ be the set of end partitions of a Sperner chain decomposition of $L(m,n-1)$.
Select all partitions of rank less than $d_{m,n}$ to form $E=\{\alpha^1,\dots, \alpha^e\}$.

\item Let $S=n\bigoplus S_{m-1,n}=\{\beta_1,\beta_2,\dots\}$.

\item Add $1$ to the first entry for each $\alpha^i$ to obtain $E'$.
If $E'\subseteq S$ then we can construct a Sperner chain decomposition of $L(m,n)$
with $S_{m,n}= S_{m,n-1} \bigcup S \setminus E'$. Otherwise, the method failed.
\end{enumerate}

The algorithm succeeds for $m\le 4$.

Clearly, for $m=1$ $L(1,n)$ is a chain and we have $S_{1,n}=\{(0)\}$.

For $m=2$, we have

\begin{prop}
  There is a Sperner chain decomposition for $L(2,n)$ with starting partitions
  $S_{2,n}=\{(2k,0): 0\le k \le n/2 \}$.
\end{prop}
\begin{proof}
The proposition clearly holds for $n=1$, in which case
$L(2,1)$ itself is a chain.

Assume the proposition holds for $n$ and we want to show that it holds for $n+1$.
We use Algorithm RecSmn and discuss by the parity of $n$:
i) If $n$ is odd, then no elements in $E_{2,n}$ has rank less than $d_{2,n+1}$, so that $E=\emptyset$ and
$S=(n+1)\oplus(0)$. It follows that $S_{2,n+1}=S_{2,n} \cup \{(n+1,0)\}$ as desired;
ii) If $n$ is even, then only one element $(n,0)^*=(n,0)$ has rank less than $d_{2,n+1}=n+1$, so that $E=\{(n,0)\}$ and
$S=(n+1)\oplus(0)$. Clearly we have the match $(n,0)\lessdot (n+1,0)$. It follows that $S_{2,n+1}=S_{2,n}$ as desired.
\end{proof}

For $m=3$, we have
\begin{prop}
  There is a Sperner chain decomposition for $L(3,n)$ with starting partitions $S_{3,n}$ as in \eqref{e-E3n}.
\end{prop}
\begin{proof}
To apply Algorithm RecSmn, it is better to have the following facts as guide for our proof.
\begin{enumerate}
  \item $S_{3,4t}=S_{3,4t-1}\cup 4t\oplus \{(4t,0),(4t,2),(4t,4),\cdots,(4t,2t)\}$.
  \item $S_{3,4t+1}=S_{3,4t}\cup(4t+1)\oplus \{(4t+1,0),(4t+1,2),(4t+1,4),\cdots,(4t+1,2t-2)\}$.
  \item $S_{3,4t+2}=S_{3,4t+1}\cup(4t+2)\oplus \{(4t+2,0),(4t+2,2),(4t+2,4),\cdots,(4t+2,2t)\}$.
  \item $S_{3,4t+3}=S_{3,4t+2}\cup(4t+3)\oplus \{(4t+3,0),(4t+3,2),(4t+3,4),\cdots,(4t+3,2t)\}$.
\end{enumerate}
Case 1, for $(m,n)=(3,4t)$. We only need to consider starting partitions whose ranks $r$ satisfy $3\cdot (4t-1)-r<d_{3,4t}=6t$,
which is equivalent to $r>6t-3$. By definition, we need all $(4k+\ell, 2k,0)$ with $\ell \neq 1$
 satisfying $6k+\ell>6t-3$ and $4k+\ell \le 4t-1$, which simplifies to
 $t - \frac{\ell+3}{6}   <   k \le t-\frac{\ell+1}{4}$. Since no such integer $k$ exists, $E$ is empty, and
 we have $S_{3,4t}=S_{3,4t-1} \cup ((4t+1)\oplus S_{2,4t})$ as desired.

 Case 2, for $(m,n)=(3,4t+1)$.
We only need to consider starting partitions whose ranks $r$ satisfy $3\cdot 4t-r<d_{3,4t+1}=6t+2$,
which is equivalent to $r>6t-2$. By definition, we need all $(4k+\ell, 2k,0)$ with $\ell \neq 1$
 satisfying $6k+\ell>6t-2$ and $4k+\ell \le 4t$, which simplifies to
 $t - \frac{\ell+2}{6}   <   k \le t-\frac{\ell}{4}$. This can happen only when $k=t$ and $\ell=0$. So $E=\{(4t,2t,0)^*\}=\{(4t,2t,0)\}$.
 Then $E'=\{(4t+1,2t,0)\} \subseteq(4t+1)\oplus S_{2,4t+1}$.
Hence we have $S_{3,4t+1}=S_{3,4t} \cup ((4t+1)\oplus S_{2,4t+1}) \setminus \{(4t+1,2t,0)\}$ as desired.

The other cases are similar.
\end{proof}
%%%Here I want to combine the cases, but seems failed.
%Case 1, for $(m,n)=(3,2t)$. We only need to consider starting partitions whose rank $r$ satisfying $3\cdot (2t-1)-r<d_{3,2t}=3t$,
%which is equivalent to $r>3t-3$. By definition, we need all $(4k+\ell, 2k,0)$ with $\ell \neq 1$
% satisfying $6k+\ell>3t-3$ and $4k+\ell \le 2t-1$, which simplifies to
% $\frac{t}{2} - \frac{\ell+3}{6}   <   k \le \frac{t}{2}-\frac{\ell+1}{4}$. Since no such integer $k$ exists, $E$ is empty, and
% we have $S_{3,2t}=S_{3,2t-1} \cup S_{2,2t}$ as desired.
%
% Case 2, for $(m,n)=(3,2t+1)$.
%We only need to consider starting partitions whose rank $r$ satisfying $3\cdot 2t-r<d_{3,2t+1}=3t+2$,
%which is equivalent to $r>3t-2$. By definition, we need all $(4k+\ell, 2k,0)$ with $\ell \neq 1$
% satisfying $6k+\ell>3t-2$ and $4k+\ell \le 2t$, which simplifies to
% $\frac{t}{2} - \frac{\ell+2}{6}   <   k \le  \frac{t}{2}-\frac{\ell}{4}$.
% This can happen only when $k=t$ and $\ell=0$. So $E=\{(4t,2t,0)^*\}=\{(4t,2t,0)\}$.
% We need to match it with a partition in $(4t+1)\oplus S_{2,4t+1}$ of rank $6t+1$. This must be $(4t+1,2t,0)$.
%Hence we have $S_{3,4t+1}=S_{3,4t} \cup S_{2,4t+1} \setminus \{(4t+1,2t,0)\}$ as desired.

\begin{theo}
If we use the Sperner chain decomposition in Theorem \ref{theo-varphi}, then Algorithm RecUD
gives rise a Sperner chain decomposition of $L(4,n)$ with starting partitions
$$ S_{4,n} = \{ (6k+s+\ell, 4k+\ell, 2k,0): s,\ell,k\ge 0,\  s\neq 1,\ \ell\neq 1, \text{ and } 6k+s+\ell \leq n\}.$$
\end{theo}
\begin{proof}

Case 1. We explain how to obtain $S_{4,6t+1}$ from $S_{4,6t}$.

We first need to find all starting partitions in $S_{4,6t}$ whose rank $r$ satisfy $4\cdot 6t -r < d_{4,6t+1}= 12t+2$,
which is $r>12t-2$. Let $(6k+s+\ell, 4k+\ell, 2k,0)$ be a such partition. Then we need the conditions:
$12k+s+2\ell >12 t-2$, $6k+s+\ell\le 6t$, $s\neq 1$, $\ell\neq 1$. The first two inequalities are equivalent to
$ t- \frac{1+s+2\ell }{12}    \le k  \le t-\frac{s+\ell}{6}$, which implies $s\le 1$ and hence $s=0$.
Now it is easy to see that $\ell $ has to be a multiple of $6$ and $k=t-\ell/6$.
By listing all such partitions and computing their dual, we obtain
\begin{align*}
 E=\{(6t,6t,0,0), (6t,6t-2,2,0), (6k,6k-4,4,0), \cdots, (6k,4k,2k,0)\}.
\end{align*}
Then one easily checked that
$$E'=\{(6t+1,6t,0,0), (6t+1,6t-2,2,0), (6k+1,6k-4,4,0), \cdots, (6k+1,4k,2k,0)\}$$ and $E'\subseteq (6t+1)\bigoplus S_{3,6t+1}$.
Hence we have $S_{4,6t+1}=S_{4,6t} \cup ((6t+1)\bigoplus S_{3,6t+1}) \setminus E'$ as desired.

Case 2. We explain how to obtain $S_{4,6t+2}$ from $S_{4,6t+1}$.

We first need to find all starting partitions in $S_{4,6t+1}$ whose rank $r$ satisfy $4\cdot (6t+1) -r < d_{4,6t+2}= 12t+4$,
which is $r>12t$. Let $(6k+s+\ell, 4k+\ell, 2k,0)$ be a such partition. Then we need the conditions:
$12k+s+2\ell >12 t$, $6k+s+\ell\le 6t+1$, $s\neq 1$, $\ell\neq 1$. The first two inequalities are equivalent to
$ t- \frac{s+2\ell }{12}    \le k  \le t-\frac{s+\ell-1}{6}$, which implies $s< 2$ and hence $s=0$.
Now it is easy to see that $\ell $ has to be a multiple of $6$ and $k=t-\frac{\ell-1}{6}$.
By listing all such partitions and computing their dual, we obtain
$$
E=\{(6t+1,6t+1,0,0), (6t+1,6t-1,2,0), (6k+1,6k-3,4,0), \cdots, (6k+1,4k+3,2k-2,0)\}.
$$
Then one easily checked that
$$E'=\{(6t+2,6t+1,0,0), (6t+2,6t-1,2,0), (6k+2,6k-3,4,0), \cdots, (6k+2,4k+3,2k-2,0)\}$$ and $E'\subseteq (6t+2)\bigoplus S_{3,6t+2}$.
Hence we have $S_{4,6t+2}=S_{4,6t+1} \cup ((6t+2)\bigoplus S_{3,6t+2}) \setminus E'$ as desired.

Case 3. We explain how to obtain $S_{4,6t+3}$ from $S_{4,6t+2}$.

We first need to find all starting partitions in $S_{4,6t+2}$ whose rank $r$ satisfy $4\cdot (6t+2) -r < d_{4,6t+3}= 12t+6$,
which is $r>12t+2$. Let $(6k+s+\ell, 4k+\ell, 2k,0)$ be a such partition. Then we need the conditions:
$12k+s+2\ell >12 t+2$, $6k+s+\ell\le 6t+2$, $s\neq 1$, $\ell\neq 1$. The first two inequalities is equivalent to
$ t- \frac{s+2\ell-2}{12}    \le k  \le t-\frac{s+\ell-2}{6}$, which implies $s< 2$ and hence $s=0$.
Now it is easy to see that $\ell $ has to be a multiple of $6$ and $k=t-\frac{\ell-2}{6}$.
By listing all such partitions and computing their dual, we obtain
$$
E=\{(6t+2,6t+2,0,0), (6t+2,6t,2,0), (6k+2,6k-2,4,0), \cdots, (6k+2,4k+2,2k,0)\}.
$$
Then one easily checked that
$$E'=\{(6t+3,6t+2,0,0), (6t+3,6t,2,0), (6k+3,6k-2,4,0), \cdots, (6k+3,4k+2,2k,0)\}$$ and $E'\subseteq (6t+3)\bigoplus S_{3,6t+3}$.
Hence we have $S_{4,6t+3}=S_{4,6t+2} \cup ((6t+3)\bigoplus S_{3,6t+3}) \setminus E'$ as desired.

The other cases are similar. We omit the details and only give some data.

Case 4. To obtain $S_{4,6t+4}$ from $S_{4,6t+3}$, we have
%
%By the similar method above, we compute
%\begin{align*}
%E=\{(6t+3,6t+3,0,0), (6t+3,6t+1,2,0), (6k+3,6k-1,4,0), \cdots, (6k+3,4k+3,2k,0)\}.
%\end{align*}
%Then one easily checked that
$$E'=\{(6t+4,6t+3,0,0), (6t+4,6t+1,2,0), (6k+4,6k-1,4,0), \cdots, (6k+4,4k+3,2k,0)\}$$ and $E'\subseteq (6t+4)\bigoplus S_{3,6t+4}$.
Hence we have $S_{4,6t+4}=S_{4,6t+3} \cup ((6t+4)\bigoplus S_{3,6t+4}) \setminus E'$ as desired.

Case 5. To obtain $S_{4,6t+5}$ from $S_{4,6t+4}$, we have
%
%
%By the similar method above, we compute
%\begin{align*}
%E=\{(6t+4,6t+4,0,0), (6t+4,6t+2,2,0), (6k+4,6k,4,0), \cdots, (6k+4,4k+4,2k,0)\}.
%\end{align*}
%Then one easily checked that
$$E'=\{(6t+5,6t+4,0,0), (6t+5,6t+2,2,0), (6k+5,6k,4,0), \cdots, (6k+5,4k+4,2k,0)\}$$ and $E'\subseteq (6t+4)\bigoplus S_{3,6t+4}$.
Hence we have $S_{4,6t+5}=S_{4,6t+4} \cup ((6t+5)\bigoplus S_{3,6t+5}) \setminus E'$ as desired.

Case 6. To obtain $S_{4,6t+6}$ from $S_{4,6t+5}$, we have
%
%
%By the similar method above, we compute
%\begin{align*}
%E=\{(6t+5,6t+5,0,0), (6t+5,6t+2,2,0), (6k+5,6k,4,0), \cdots, (6k+5,4k+4,2k,0)\}.
%\end{align*}
%Then one easily checked that
$$E'=\{(6t+6,6t+5,0,0), (6t+6,6t+2,2,0), (6k+6,6k,4,0), \cdots, (6k+6,4k+4,2k,0)\}$$ and $E'\subseteq (6t+6)\bigoplus S_{3,6t+6}$.
Hence we have $S_{4,6t+6}=S_{4,6t+5} \cup ((6t+6)\bigoplus S_{3,6t+6}) \setminus E'$ as desired.
\end{proof}

\section{Concluding Remark}
In this paper, we give an explicit order matching $\varphi$ for $L(3,n)$ using several different approaches. The methods
extend for $L(4,n)$. But for $L(m,n)$ with $m\ge 5$, we need new idea to construct the order matching.

We suspect that the greedy algorithm will succeed if we use an appropriate total ordering on $L(m,n)_i$. If succeeds, the corresponding chain tableau
will be helpful in finding the patterns. Existing algebraic proofs might give hints on the ordering.

\textbf{Acknowledgments:}
This work was supported by the National Natural Science Foundation of China (Nos. 12071311, 12571355, 12401441) and the Natural Science Foundation of Hunan Province (No. 2025JJ60010).
The authors declare that they have no conflicts of interest regarding the publication of this paper.

\textbf{Data Availability Statement:} No external data were used in this study. All relevant information is contained within the manuscript.

\end{document}